\definecolor{grau}{rgb}{0.65,0.65,0.65}
\definecolor{dblau}{rgb}{0,0,0.45}
\definecolor{blau}{rgb}{0,0,0.75} 
\algnewcommand\algorithmicinput{\textbf{Input: }}
\algnewcommand\Input[1]{\item[\algorithmicinput] #1}
\algnewcommand\Alg{\item \textbf{Algorithm: }}
\algnewcommand\algorithmicoutput{\textbf{Output: }}
\algnewcommand\Output[1]{\item[\algorithmicoutput] #1}
\algnewcommand\algorithmicresult{\textbf{Result: }}\algnewcommand\Result[1]{\item[\algorithmicresult] #1}
\definecolor{grau}{rgb}{0.65,0.65,0.65}
\definecolor{dblau}{rgb}{0,0,0.45}
\definecolor{blau}{rgb}{0,0,0.75} 
\newcommand{\myt}[1]{{\it\color{dblau}#1}}
\newcommand{\dit}{\ensuremath{(b,d)}\text{-ary ITs}}
\newcommand{\port}{\ensuremath{(b,\alpha)}\text{-PORTs}}
\newcommand{\Dit}{\ensuremath{(b,d)}\text{-ary} \text{in\-crea\-sing} \text{trees}}
\newcommand{\Port}{\ensuremath{(b,\alpha)}\text{-plane} \text{oriented} \text{recursive} \text{trees}}
\newcommand{\Rec}{\text{bucket recursive trees}}
\newtheorem{theorem}{Theorem}
\newtheorem{lemma}{Lemma}
\newtheorem{prop}{Proposition}
\newtheorem{coroll}{Corollary}
\theoremstyle{definition}
\newtheorem{remark}{Remark}
\newtheorem{example}{Example}
\newtheorem{defi}{Definition}
\newtheorem{urn}{Urn}
\def\P{{\mathbb {P}}}
\def\E{{\mathbb {E}}}
\def\N{{\mathbb {N}}}
\def\R{{\mathbb {R}}}
\newcommand{\bN}{\ensuremath{\mathbf{N}}}
\newcommand{\bQ}{\ensuremath{\mathbf{Q}}}
\DeclareMathOperator{\Be}{Be}
\DeclareMathOperator{\NegBin}{NegBin}
\DeclareMathOperator{\grad}{deg}
\newcommand{\ind}{\ensuremath{\mathbb{I}}}
\newcommand{\unord}{\ensuremath{^{[U]}}}
\newcommand{\ptree}{\ensuremath{^{[\mathcal{T}]}}}
\newcommand{\pgrow}{\ensuremath{^{[e]}}}
\newcommand{\law}{\ensuremath{\stackrel{(d)}=}}
\newcommand{\claw}{\ensuremath{\xrightarrow{(d)}}}
\newcommand{\fallfak}[2]{\ensuremath{#1^{\underline{#2}}}}
\newcommand{\auffak}[2]{\ensuremath{#1^{\overline{#2}}}}
\author[M.~Kuba]{Markus Kuba}
\address{Markus Kuba\\
Department Applied Mathematics and Physics\\
University of Applied Sciences - Technikum Wien\\
H\"ochst\"adtplatz 5, 1200 Wien} %
\email{kuba@technikum-wien.at}
\author[A.~Panholzer]{Alois Panholzer}
\address{Alois Panholzer\\
Institut f{\"u}r Diskrete Mathematik und Geometrie\\
Technische Universit\"at Wien\\
Wiedner Hauptstr. 8-10/104\\
1040 Wien, Austria} \email{Alois.Panholzer@tuwien.ac.at}
\date{\today}
\title{On bucket increasing trees, clustered increasing trees and increasing diamonds}
\keywords{bucket increasing trees, clustered trees, stochastic growth processes, descendants, node-degrees, limiting distributions}%
\subjclass[2000]{05C05, 60F05} %
\begin{document}

\begin{abstract}
In this work we analyze bucket increasing tree families. 
We introduce two simple stochastic growth processes, generating random bucket increasing trees of size $n$, complementing 
the earlier result of Mahmoud and Smythe~\cite{MahSmy1995} for bucket recursive trees. On the combinatorial side, 
we define multilabelled generalizations of the tree families $d$-ary increasing trees and generalized plane-oriented recursive trees. 
Additionally, we introduce a clustering process for ordinary increasing trees and relate it to bucket increasing trees. 
We discuss in detail the bucket size two and present a bijection between such bucket increasing tree families and certain families of graphs called increasing diamonds, providing an explanation for phenomena observed by Bodini et al.~\cite{Hwang2016}. 

\smallskip

Concerning structural properties of bucket increasing trees, we analyze the tree parameter $K_n$. 
It counts the initial bucket size of the node containing label $n$ in a tree of size $n$ and is closely related to the distribution of node types. 
Additionally, we analyze the parameters descendants of label $j$ and degree of the bucket containing label $j$, providing distributional decompositions, 
complementing and extending earlier results~\cite{BucketPanKu2009}.

\end{abstract}

\maketitle

\section{Introduction}
\myt{Increasing trees} or \myt{increasingly labelled trees} are rooted labelled trees. 
The nodes of a tree $T$ of size $|T| = n$ are labelled with distinct integers from a label set $\mathcal{M}$ of size $|\mathcal{M}|=n$. 
Here, the size $|T|$ of a tree denotes the number of vertices of $T$ (and thus coincides with the number of labels). One chooses as label set the first $n$ positive integers, i.e., $\mathcal{M} = [n] := \{1, 2, \dots, n\}$, in such a way that the label of any node in the tree is smaller than the labels of its children. As a consequence, the labels of each path from the root to an arbitrary node in the tree are forming an increasing sequence, which explains the name of such a labelling. Various increasing tree models turned out to be appropriate in order to describe the growth behavior of quantities in a number of applications and occurred in the probabilistic literature.  E.g., they are used to describe the spread of epidemics, to model pyramid schemes, and as a simplified growth model of the world wide web. See Mahmoud and Smythe~\cite{MahSmy1995-2} for a survey collecting results about recursive trees, a subfamily of increasing trees, prior 1995. For recent results about increasing trees we refer to the books of Drmota~\cite{Drmota}, Flajolet and Sedgewick~\cite{FlaSed} and references therein.

\smallskip

In above definition of increasing trees each node in the tree gets exactly one label. 
Here in this work we discuss an extensions of increasing trees to \myt{bucket increasing trees}. These are \myt{multilabelled increasing tree families}. All the nodes $v$ in the tree $T$ are \myt{buckets} with a maximal capacity of $b$ labels. Here and throughout this work the integer $b\in\N$ denotes the maximal capacity or bucket size. The integer $c=c(v)$ denotes
the current capacity or load of a node $v\in T$, with $1 \le c \le b$.
We always call a node $v$ with capacity $c(v) = b$ \myt{saturated} and otherwise \myt{unsaturated}. We impose the additional restriction that all internal nodes (i.e., non-leaves) in the tree must be saturated. In contrast, the leaves might be either saturated or unsaturated. The size of a tree $|T|$ is here and throughout this work measured by the sum of all node capacities $c(v)$: $|T| = \sum_{v\in T} c(v)$; equivalently, it is given by the number of labels in the tree $T$. We note in passing that tree families where all leaves have to be saturated have been considered and discussed recently in~\cite{KubPan2012,KubPan2016} in connection with so-called hook-length formulas. Moreover, closely related combinatorial objects named increasing diamonds have been studied by Bodini et al.~\cite{Hwang2016}, as discussed later.

\smallskip 

A specific family of bucket increasing trees called \myt{bucket recursive trees} has been introduced by Mahmoud and Smythe~\cite{MahSmy1995}. The probabilistic description of random bucket recursive trees is given by a \myt{stochastic growth rule}: a tree grows by progressive attraction of increasing integer labels. Such growth rules are part of a general (preferential) attachment rule with fertility and aging, see~Berger et al.~\cite{Borgs2004,Borgs2005}. A general combinatorial model for bucket recursive trees has been introduced in~\cite{BucketPanKu2009}.

\smallskip

The aims of this paper are the following: first, we discuss the general combinatorial framework for bucket increasing trees 
and the corresponding random tree model. We introduce and analyze two new families of bucket increasing trees.
For the two new families of \myt{$(b,d)$-ary increasing trees} and \myt{$(b,\alpha)$-plane oriented recursive trees} we present both, a stochastic growth rule and a combinatorial description. Then, we prove that both descriptions are equivalent. As a byproduct of our study we complement previous results~\cite{MahSmy1995,BucketPanKu2009} for bucket recursive trees. Furthermore we present a clustering map for ordinary increasing trees, which maps them to bucket increasing trees. In the special case of bucket size $b=2$ we state a bijection
between bucket increasing trees and so-called increasing diamonds studied by Bodini et al.~\cite{Hwang2016}, providing an explanation for phenomena
observed in~\cite{Hwang2016}.

\smallskip

We obtain the combinatorial description using a generalization of a class of weighted tree families, so called simple families of increasing trees, to a class of bucket trees, which we call families of bucket increasing trees. 
All considered tree families will then be special instances of a bucket increasing tree family. As in the previous analysis of bucket recursive trees the gain of the combinatorial description provided here is that the natural combinatorial
decomposition of a bucket increasing tree into a root bucket and its subtrees will lead to a recursive description of several important
tree parameters in random bucket recursive trees. This combinatorial decomposition can be translated into certain differential equations for suitably defined generating functions. On the other hand the stochastic growth rules will allow us to present decompositions of random variables of interest. 

\smallskip

The combination of both methods -- a combinatorial approach leading to exact expressions and
the stochastic growth rule leading to decompositions -- turns out to be a particularly useful tool for a
variety of parameters. First we study the random variables (r.v.\ for short) $K_{n}$ counting the size of the bucket containing label $n$. 
Explicit results for the probability mass function of $K_n$ are obtained by the combinatorial approach.
The explicit expression for $\P\{K_n=m\}$ readily leads to a discrete limit law for $K_n$ as $n$ tends to infinity. 
Then, we relate $K_n$ to the random variables $N_{n,k}$, counting the number of buckets of capacity $k$, $1\le k\le b$. 

\smallskip

Next, we turn to the analysis of the r.v.\ $Y_{n,j}$, counting the number of descendants of label $j$. 
The results for $K_n$ and its limit law are used to obtain the exact distribution of $Y_{n,j}$,
as well as several limit laws depending on the growth of $j=j(n)$ as $n$ tends to infinity. Moreover, we study the random variable $X_{n,j}$ counting the out-degree of the bucket containing label $j$ in a size $n$ bucket tree. The analysis of $X_{n,j}$ is based on a stopping time closely related to $Y_{n,j}$.
We also provide decompositions of the random variables $X_{n,j}$ and $Y_{n,j}$ in terms of $K_n$ for fixed $n$.

\smallskip

\subsection{Notation} 
We denote with $X \stackrel{(d)}{=} Y$  the equality in distribution of two random variables $X$ and $Y$. We write
$X_{n} \xrightarrow{(d)} X$ for the weak convergence (i.e., convergence in distribution) of a sequence
of random variables $X_{n}$ to a r.v.\ $X$. Let $H_{n} := \sum_{k=1}^{n} \frac{1}{k}$ denote the harmonic numbers and 
$H_{n+\alpha}-H_{\alpha} := \sum_{k=1}^{n} \frac{1}{k+\alpha}$ the continuation of the harmonic numbers for
a complex $\alpha \in \mathbb{C}\setminus\{-1, -2, -3, \dots\}$. 
Here and throughout this work we use the notation $\fallfak{x}{s}:=x(x-1)\dots(x-(s-1))$ for the falling factorials, and $\auffak{x}{s}:=x(x+1)\dots(x+s-1)$ for the rising factorials, $s\in\N_0$.\footnote{The notation $\fallfak{x}{s}$ and $\auffak{x}s$ was introduced and popularized by Knuth; alternative notations for the falling factorials 
include the Pochhammer symbol $(x)_s$, which is unfortunately sometimes also used for the rising factorials.}
Throughout this work we will use the abbreviation \dit\ for $(b,d)$-ary bucket increasing trees and \port\  for bucket $(b,\alpha)$-plane oriented increasing trees.

\section{Description of bucket increasing trees\label{secDESC}}

\subsection{Tree evolution processes\label{secDESCsub1}} 
In~\cite{MahSmy1995} a stochastic growth rule for bucket recursive trees has been given, generalizing the rule for ordinary random recursive trees (which are the special instance $b=1$): a bucket tree grows by progressive attraction of increasing integer labels: when inserting element $n+1$ into an existing bucket recursive tree containing $n$ elements (i.e., containing the labels $\{1, 2, \dots, n\}$) all $n$ existing elements in the tree compete to attract the element $n+1$, where all existing elements have equal chance to recruit the new element. If the element winning this competition is contained in a node with less than $b$ elements (an unsaturated bucket or node), element $n+1$ is added to this node, otherwise if the winning element is contained in a node with already $b$ elements (a saturated bucket or node), element $n+1$ is attached to this node as a new bucket containing only the element $n+1$.
Starting with a single bucket as root node containing only element $1$ leads after $n-1$ insertion steps,
where the labels $2, 3, \dots, n$ are successively inserted according to this growth rule, to a so called random bucket recursive tree with $n$ elements and maximal bucket size $b$. Of course, the above growth rule for inserting the element $n+1$ could also be formulated by saying that, for an existing bucket recursive tree $T$ with $n$ elements, the probability that a certain node $v \in T$ attracts the new element $n+1$ is proportional to the number of elements contained in $v$, let us say $k$ with $1 \le k \le b$, and is thus given by $\frac{k}{n}$. 
Summarizing this procedure we obtain the following definition. 

\begin{defi}[Bucket recursive trees]
\label{def0}
We start with a single bucket as root node containing only label $1$. 
Given a tree $T$ of size $n\ge 1$. Let $p(v)=\P(n+1 <_t v \mid c(v)\}$ denote the probability that node $v\in T$ attracts label $n+1$ conditioned on its capacity $c(v)$.
The family of random \emph{bucket recursive trees} is generated according to the probabilities 
\[
   p(v)  = \frac{c(v)}{n},
\] 
with capacity $1\le c(v)\le  b$, thus independent of the out-degree $\grad^{+}(v)\ge 0$ of node $v$.
\end{defi}
Note that here and throughout this work the capacities $c(v)=c_n(v)$ and  
the out-degree $\grad^{+}(v)=\grad^{+}_n(v)$ of a node $v$ in a tree $T$ are always dependent on the size $|T|=n$.

\smallskip

In the following we present two new stochastic growth rules. They generate families of random bucket increasing trees with bucket size $b\ge 1$. Concerning the created trees, they are by definition \emph{unordered}. For both rules there is an additional dependence on the out-degree $\grad^{+}(v)$ of the nodes.

\begin{defi}[\Dit]
\label{def1}
We start, case $n=1$, with a single bucket as root node containing only label $1$. 
Given a tree $T$ of size $n\ge 1$. Let $p(v)$ denote the probability that node $v\in T$ attracts label $n+1$ in a bucket increasing tree of size $n\in\N$. 

\smallskip

The family of random \emph{(b,d)-ary increasing trees}, with $d\in\N\setminus\{1\}$, is generated according to the probabilities
\[
   p(v)  =\displaystyle{ \frac{(d-1)c(v)+1-\grad^{+}(v)}{(d-1)n+1}}, 
\]
with $1\le c(v)\le  b$ and $\grad^{+}(v)\ge 0$.
\end{defi}

\smallskip

\begin{defi}[\Port]
\label{def2}
We start, case $n=1$, with a single bucket as root node containing only label $1$. 
Given a tree $T$ of size $n\ge 1$. The family of random \emph{(b,$\alpha$)-plane oriented recursive trees}, with $\alpha>0$, is generated according to the probabilities $p(v)$ that node $v\in T$ attracts label $n+1$:
\[
   p(v)  =  \displaystyle{ \frac{\grad^{+}(v)+(\alpha+1)c(v)-1}{(\alpha+1)n-1}},
\]
with $1\le c(v)\le  b$ and $\grad^{+}(v)\ge 0$.
\end{defi}

\begin{remark}
Panholzer and Prodinger~\cite{PanPro2007} characterized increasing tree families, which can be constructed by a simple stochastic growth rule. They obtained a unifying description based on two parameters. They obtained three different families of trees. The two growth processes for \Dit\ and \port\, together with the process for random bucket recursive trees stated before, generalize the processes of~\cite{PanPro2007}, namely, they correspond to the special case of bucket size $b=1$. 
\end{remark}

\begin{remark}[Linear bucket increasing trees]
Plane-oriented recursive trees (PORTs, i.e., $(1,1)$-PORTs) and their generalization also appeared in the literature under different names: Pittel~\cite{Pittel} calls such tree families linear increasing trees, also unifying recursive trees and $d$-ary increasing trees into a single family. Closely related tree families are so-called scale-free trees, which can also be generated by a preferential attachment rule.

\smallskip

In the spirit of Pittel, we can define \myt{linear bucket increasing trees} as follows. 
Given a tree $T$ of size $n\ge  1$, a label $n+1$
is attracted by a node $v\in T$ that is chosen with probability proportional to 
\[
\alpha\cdot (c(v)-1)+\beta\cdot\grad^{+}(v)+m,
\]
with $1\le c(v)\le  b$ and $\grad^{+}(v)\ge 0$. Here $\alpha,\beta,m\in\R$ denote real parameters such that the sum is non-negative.
For $b=1$ we have $c(v)=1$ and we reobtain Pittel's linear increasing trees. 
\end{remark}



\subsection{Combinatorial description of bucket increasing tree families}
Our basic objects are rooted ordered trees $T\in\mathcal{B}$. Here, the order of the
subtrees of a node is of relevance. The nodes of the trees are buckets
with an integer capacity $c$, with $1 \le c \le b$ for a given
maximal integer bucket size $b \ge 1$. We assume that all internal nodes (i.e., non-leaves) in the tree
must be saturated, while the leaves might be either saturated or
unsaturated. Here $\mathcal{B}$ denotes the family of all bucket ordered trees with maximal bucket size $b$.
A tree $T$ defined in this way is called a bucket ordered tree with maximal bucket size $b$. As already mentioned we define for bucket ordered trees the size $|T|$ of a tree $T$ via $|T| = \sum_{v} c(v)$, where $c(v)$ ranges over
all vertices of $T$. An increasing labelling $\ell(T)$ of a bucket ordered
tree $T$ is then a labelling of $T$, where the labels $\{1, 2,
\dots, |T|\}$ are distributed amongst the nodes of $T$, such that
the following conditions are satisfied: $(i)$ every node $v$
contains exactly $c(v)$ labels, $(ii)$ the labels within a node are
arranged in increasing order, $(iii)$ each sequence of labels along
any path starting at the root is increasing. A bucket ordered increasing tree $\tilde{T}$ is given by a pair $\tilde{T}=(T,\ell(T))$. 

Then a class $\mathcal{T}$ of a family of bucket increasing trees with maximal bucket size $b$ can be defined
in the following way. A sequence of non-negative numbers $(\varphi_{k})_{k \ge 0}$ with $\varphi_{0} > 0$
and a sequence of non-negative numbers $\psi_{1}, \psi_{2}, \dots, \psi_{b-1}$
is used to define the weight $w(T)$ of any bucket ordered tree $T$ by $w(T) := \prod_{v} w(v)$, where $v$ ranges over all vertices of $T$. The weight $w(v)$ of a node $v$ is given as follows, where $\grad^{+}(v)$ denotes the out-degree (i.e., the number of children) of node $v$:
\begin{equation*}
   w(v) =
   \begin{cases}
      \varphi_{\grad^{+}(v)}, & \quad \text{if} \enspace c(v)=b, \\
      \psi_{c(v)}, & \quad \text{if} \enspace c(v) < b.
   \end{cases}
\end{equation*}
Thus, for saturated nodes the weight depends on the out-degree $\grad^{+}(v)$ and is described by the sequence $\varphi_{k}$, whereas for unsaturated nodes the weight depends on the capacity $c(v)$ and is described by the sequence $\psi_{k}$.

Furthermore, $\mathcal{L}(T)$ denotes the set of different increasing labellings $\ell(T)$ of the tree $T$ with distinct integers $\{1, 2, \dots, |T|\}$, where $L(T) := \big|\mathcal{L}(T)\big|$ denotes its cardinality.
The family $\mathcal{T}$ consists of all trees $\tilde{T}=(T,\ell(T))$, with their weights $w(T)$ and the set of increasing labellings $\mathcal{L}(T)$ and we define $w(\tilde{T}):=w(T)$. Concerning bucket ordered increasing trees, note that the left-to-right order of the subtrees of the nodes is relevant. E.g., the trees \raisebox{-0.6em}{\includegraphics[height=2em]{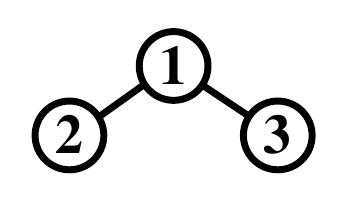}} and
\raisebox{-0.6em}{\includegraphics[height=2em]{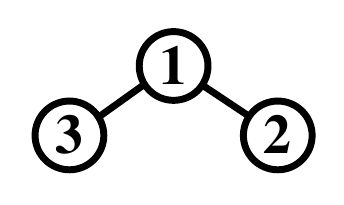}} are forming two different trees.

For a given degree-weight sequence $(\varphi_{k})_{k \ge 0}$ with a degree-weight generating function
$\varphi(t) := \sum_{k \ge 0} \varphi_{k} t^{k}$ and a bucket-weight sequence $\psi_{1}, \dots, \psi_{b-1}$,
we define now the total weights $T_n$ by 
\[
T_{n} :=  \sum_{T\in\mathcal{B}\colon |T|=n}w(T)\cdot L(T)=\sum_{\tilde{T}=(T,\ell(T))\in\mathcal{T}\colon |T|=n} w(\tilde{T}).
\]

\medskip

It is advantageous for such enumeration problems to describe a family of increasing trees $\mathcal{T}$ by the following
formal recursive equation:
\begin{align}
   \mathcal{T} & = \psi_{1} \cdot \raisebox{-0.5ex}{\includegraphics[height=2.3ex]{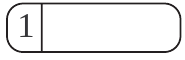}} \; \dot{\cup} \;
   \psi_{2} \cdot \raisebox{-0.5ex}{\includegraphics[height=2.3ex]{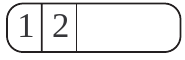}} \; \dot{\cup} \; \cdots \; \dot{\cup} \;
   \psi_{b-1} \cdot \raisebox{-0.5ex}{\includegraphics[height=2.3ex]{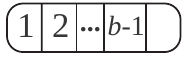}} \; \dot{\cup} \notag \\
   & \quad \varphi_{0} \cdot \raisebox{-0.5ex}{\includegraphics[height=2.3ex]{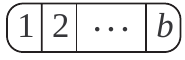}} \; \dot{\cup} \;
   \varphi_{1} \cdot \raisebox{-0.5ex}{\includegraphics[height=2.3ex]{bucket4.pdf}} \times \mathcal{T} \; \dot{\cup} \;
   \varphi_{2} \cdot \raisebox{-0.5ex}{\includegraphics[height=2.3ex]{bucket4.pdf}} \times \mathcal{T} \ast \mathcal{T} \; \dot{\cup} \;
   \varphi_{3} \cdot \raisebox{-0.5ex}{\includegraphics[height=2.3ex]{bucket4.pdf}} \times \mathcal{T} \ast \mathcal{T} \ast \mathcal{T} \;
   \dot{\cup} \; \cdots \label{eqna2} \\
   & = \psi_{1} \cdot \raisebox{-0.5ex}{\includegraphics[height=2.3ex]{bucket1.pdf}} \; \dot{\cup} \;
   \psi_{2} \cdot \raisebox{-0.5ex}{\includegraphics[height=2.3ex]{bucket2.pdf}} \; \dot{\cup} \; \cdots \; \dot{\cup} \;
   \psi_{b-1} \cdot \raisebox{-0.5ex}{\includegraphics[height=2.3ex]{bucket3.pdf}} \; \dot{\cup} \;
   \raisebox{-0.5ex}{\includegraphics[height=2.3ex]{bucket4.pdf}} \times \varphi\big(\mathcal{T}\big), \notag
\end{align}
where $\raisebox{-0.5ex}{\includegraphics[height=2.3ex]{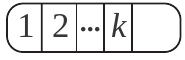}}$ denotes a bucket of capacity $k$ labelled by $1, 2, \dots, k$,
$\times$ the cartesian product, $\ast$ the partition product for labelled
objects, and $\varphi(\mathcal{T})$ the substituted structure. On the other hand, we can use standard notation~\cite{FlaSed}. Let $\mathcal{Z}$ denote the \emph{atomic class} (i.e., a single (uni)labelled node), $\mathcal{A}^{\Box} \ast \mathcal{B}$ the \emph{boxed product} (i.e., the smallest label is constrained to lie in the $\mathcal{A}$ component) of the combinatorial classes $\mathcal{A}$ and $\mathcal{B}$. Then, 
\begin{align*}
 \mathcal{T} & = \psi_1\cdot \mathcal{Z}^{\Box} + \psi_2\cdot (\mathcal{Z}^{\Box})^2+ \dots
+ \psi_{b-1}\cdot (\mathcal{Z}^{\Box})^{b-1} + (\mathcal{Z}^{\Box})^{b}\ast \varphi(\mathcal{T})\\
 & = \sum_{k=1}^{b-1}\psi_{k}\cdot (\mathcal{Z}^{\Box})^{k} + (\mathcal{Z}^{\Box})^{b}\ast \varphi(\mathcal{T}).
\end{align*}
Here the meaning of $(\mathcal{Z}^{\Box})^{k} \ast \mathcal{B}$ is $\mathcal{Z}^{\Box} \ast \left(\mathcal{Z}^{\Box} \ast \left(\cdots \ast \left(\mathcal{Z}^{\Box} \ast \mathcal{B}\right)\right)\right)$, with $k$ occurrences of $\mathcal{Z}^{\Box}$.

\medskip

Using above formal description, one can show that the exponential generating function
$T(z) := \sum_{n \ge 1} T_{n} \frac{z^{n}}{n!}$ of the total weights $T_{n}$
is characterized by the following result of~\cite{BucketPanKu2009}.
\begin{prop}
The exponential generating function $T(z)$ of bucket increasing trees with degree-weight generating function $\varphi(t)$ satisfies 
an ordinary differential equation of order $b$:
\begin{align}
   \label{eqna1}
   \frac{d^{b}}{d z^{b}} T(z) & = \varphi\big(T(z)\big), 
\end{align}
with initial conditions
\[
   T(0)=0, \qquad T^{(k)}(0) = \psi_{k}, \quad \text{for} \enspace 1 \le k \le b-1.
\]
\end{prop}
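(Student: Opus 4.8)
The plan is to apply the symbolic method for labelled structures: translate the recursive specification of $\mathcal{T}$ into a functional equation for the exponential generating function $T(z)$, and then differentiate $b$ times. I will work with the second, $\mathcal{Z}^{\Box}$-formulation of the specification, since the generating-function dictionary for the boxed product and the substitution is most transparent there.

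First I would record that dictionary. For the boxed product, a structure of the form $\mathcal{Z}^{\Box}\ast\mathcal{D}$ of size $n$ is obtained by placing the (forced) smallest label in the distinguished atom and forming a $\mathcal{D}$-structure on the remaining $n-1$ labels, so its EGF equals $\int_0^z D(t)\,dt$; iterating this $b$ times, the class $(\mathcal{Z}^{\Box})^{b}\ast\varphi(\mathcal{T})$ has EGF equal to the $b$-fold iterated integral
\[
J^{b}\big[\varphi(T)\big](z):=\int_0^z\!\int_0^{t_1}\!\cdots\!\int_0^{t_{b-1}}\varphi\big(T(t_b)\big)\,dt_b\cdots dt_1 ,
\]
while each term $\psi_k\cdot(\mathcal{Z}^{\Box})^{k}$ with $1\le k\le b-1$ contributes $\psi_k\,\frac{z^{k}}{k!}$. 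For the substitution, since $\varphi(t)=\sum_{k\ge 0}\varphi_k t^{k}$ and a $\varphi(\mathcal{T})$-structure is an ordered sequence of $k$ independently labelled $\mathcal{T}$-components weighted by $\varphi_k$, its EGF is $\sum_{k\ge 0}\varphi_k\,T(z)^{k}=\varphi(T(z))$, a composition of formal power series that is well defined because $T(0)=0$. The node weights $\varphi_k,\psi_k$ enter only as scalar factors and are carried along unchanged.

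Combining these translations, the specification $\mathcal{T}=\sum_{k=1}^{b-1}\psi_k\cdot(\mathcal{Z}^{\Box})^{k}+(\mathcal{Z}^{\Box})^{b}\ast\varphi(\mathcal{T})$ yields the integral equation $T(z)=\sum_{k=1}^{b-1}\psi_k\,\frac{z^{k}}{k!}+J^{b}\big[\varphi(T)\big](z)$. Applying $\frac{d^{b}}{dz^{b}}$ annihilates the polynomial part, which has degree $\le b-1$, and inverts the $b$ integrations, giving $T^{(b)}(z)=\varphi\big(T(z)\big)$, i.e.\ \eqref{eqna1}. The initial conditions follow by inspecting low-order coefficients: every term on the right-hand side carries a factor $z$, so $T(0)=0$; and for $1\le k\le b-1$ one has $\frac{d^{k}}{dz^{k}}J^{b}[\varphi(T)]=J^{b-k}[\varphi(T)]$, which vanishes at $z=0$, whereas $\frac{d^{k}}{dz^{k}}\big(\sum_{j=1}^{b-1}\psi_j\,\frac{z^{j}}{j!}\big)\big|_{z=0}=\psi_k$, whence $T^{(k)}(0)=\psi_k$.

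The part that needs care is the justification of the symbolic translation itself rather than these formal manipulations. One must check that \eqref{eqna2} genuinely encodes the family of bucket increasing trees: in particular that when the root bucket is saturated its $b$ labels are forced to be exactly $\{1,2,\dots,b\}$ — since along any root-to-node path the $b$ root labels must all be smaller than the entries of every descendant bucket, and there are only $b-1$ labels below any $j\le b$ — which is precisely why a boxed product, and not an ordinary labelled product, is the correct operation for attaching the subtree forest to the root. One must also verify that the specification is admissible, so that $T(z)$ exists as a formal power series; it is in fact uniquely determined by \eqref{eqna1} together with the initial conditions, since $T^{(b)}(0)=\varphi(0)=\varphi_0>0$ and differentiating the differential equation repeatedly pins down all further Taylor coefficients. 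Once this is in place the computation above is routine; this is the route taken in~\cite{BucketPanKu2009}.
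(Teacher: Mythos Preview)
Your derivation is correct and is exactly the standard symbolic-method translation of the specification~\eqref{eqna2} into the integral equation and thence into the differential equation~\eqref{eqna1}. Note, however, that the present paper does not actually prove this proposition: it simply quotes it as ``the following result of~\cite{BucketPanKu2009}'' without argument, so there is no paper-proof to compare against. Your write-up is essentially the argument one would expect in that reference (and you yourself cite it at the end), including the extra care about why the root bucket is forced to carry labels $1,\dots,b$ and why the specification is admissible.
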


\begin{example}[Bucket size one -- ordinary increasing trees]
In the case of bucket size $b=1$ we obtain ordinary increasing trees. 
We have a simple description using the \emph{boxed product}:
\begin{equation}
  \mathcal{T} = \mathcal{Z}^{\Box}\ast \varphi\big(\mathcal{T}\big).
\end{equation}

The exponential generating function $T(z)$ of bucket increasing trees with degree-weight generating function $\varphi(t)$ is implicitly defined by
\[
\int_0^{T(z)}\frac{dt}{\varphi(t)} = z.
\]
Prominent varieties include recursive trees, binary increasing and plane-oriented recursive trees; for an overview see, e.g.,~\cite{Drmota,FlaSed}. 
\end{example}

\begin{example}[Bucket size two -- bilabelled trees]\label{example:bilabelled}
Trees with bucket size $b=2$, degree-weight generating function $\varphi(t)$ and weight $\psi=\psi_1\ge 0$ correspond to so-called bilabelled trees. The family $\widehat{\mathcal{T}}$ can be described by the following symbolic equation:
\begin{equation}
\label{HookBijII-eqn1}
  \mathcal{T} = \mathcal{Z}^{\Box} \ast \left(\mathcal{Z}^{\Box} \ast \varphi\big(\mathcal{T}\big)\right),
\end{equation}
The differential equation $T''(z)=\varphi(T(z))$ can be readily translated to a first-order equation. 
Namely, multiplication with $T'(z)$ and integration leads to the first-order differential equation
\begin{equation}
\label{HookBijII-DGL2}
    T'(z) = \sqrt{\psi^2+2\cdot\Phi(T(z))}, \quad T(0)=0,
\end{equation}
with $\Phi(x)=\int_{0}^{x} \varphi(t)dt$. Hence, $T=T(z)$ is implicitly given via
\[
\int_{0}^{T}\frac{dx}{\sqrt{\psi^2+2\cdot\Phi(x)}}=z.
\]
The special choice $\psi=0$ leads to so-called strict bilabelled increasing tree families. 
Here, due to the choice $\psi=0$, all nodes have to be saturated. Such tree families naturally give rise to hook-length formulas; see~\cite{KubPan2016} for many examples.
\end{example}

\begin{example}[Strict $b$-labelled tree families]
Families of strictly $b$-labelled trees have been recently studied due to their connection to hook-length formulas~\cite{KubPan2012,KubPan2016}. Here $\psi_1=\dots=\psi_{b-1}=0$, thus only saturated nodes are allowed.  
\end{example}

\section{Combinatorial models of the tree evolution processes}
\subsection{Random tree models}
Given a degree-weight sequence $(\varphi_{k})_{k \ge 0}$ and a bucket-weight sequence $\psi_{1}, \dots, \psi_{b-1}$. 
For each class $\mathcal{T}$ of bucket increasing trees associated to $(\varphi_{k})_{k \ge 0}$ and $\psi_{1}, \dots, \psi_{b-1}$
we define in a natural way probability models for random (ordered) bucket increasing trees $\mathcal{T}_n$ of size $n$. 
We assume that each increasingly labelled bucket ordered increasing tree $\tilde{T} = \big(T,\ell(T)\big)\in\mathcal{T}_n$ of size $n$ is chosen with a probability proportional to its weight $w(\tilde{T})$. 

\begin{defi}[Random bucket ordered increasing trees]
A probability measure on ordered bucket increasing trees $\tilde{T}\in\mathcal{T}_n$ of size $n$ 
is defined by 
\begin{equation*}
   \P\ptree(\big\{\tilde{T}\big\}) = \frac{w(\tilde{T})}{T_{n}}
	=\frac{\displaystyle{\bigg(\prod_{v\in \tilde{T}\colon c(v)<b} \psi_{c(v)} \bigg) \cdot \bigg(\prod_{v\in \tilde{T}\colon c(v)=b}\varphi_{\grad^{+}(v)}\bigg)}}{T_{n}}.
\end{equation*}
We speak then about \myt{random ordered bucket increasing trees} of size $n$ of the family $\mathcal{T}$ under the \myt{random tree model}.
\end{defi}

\medskip

From a combinatorial point of view it is often convenient to work with ordered trees. However, the trees generated by the tree evolution processes are by definition unordered. Thus, we turn our attention to unordered trees. Our basic objects are unordered bucket trees $T\unord$ together with an increasing labelling $\ell(T\unord)$.  In order to obtain a measure on unordered bucket increasing trees $\mathcal{T}\unord$ 
we proceed in a standard way embedding unordered trees into ordered trees. Each unordered bucket increasing tree $\tilde{T}\unord =(T\unord,\ell(T\unord))$ corresponds to 
\[
\prod_{v\in T\unord}\grad^{+}(v)! 
\]
different ordered bucket increasing trees. We obtain a canonical ordered representative $\tilde{T}\in\mathcal{T}$ of the unordered tree $\tilde{T}\unord=(T\unord,\ell(T\unord))$ by ordering the subtrees of each node $v\in\tilde{T}\unord$ in an increasing left-to-right way according to the smallest labels contained in the respective subtrees taking into account the labelling $\ell(T\unord)$. Let $f\colon \mathcal{T}\unord\to \mathcal{T}$ denote this injective ordering map. 

\smallskip

\begin{defi}[Random unordered bucket increasing trees]
A probability measure $\P\ptree$ on unordered bucket increasing trees $\tilde{T}\unord\in\mathcal{T}\unord_{n}$ of size $n$ 
is defined by 
\begin{equation*}
   \P\ptree\big\{\tilde{T}\unord\big\} = \frac{(w\circ f)(\tilde{T}\unord) \cdot \prod_{v\in T\unord}\grad^{+}(v)! }{T_{n}}.
\end{equation*}
We speak then about \myt{random unordered bucket increasing trees} of size $n$ of the family $\mathcal{T}\unord$ under the \myt{random tree model}.
\end{defi}

\medskip

Possibly different models of randomness are introduced when generating an unordered tree of size $n$ according to one of the three tree evolution processes described before. 
Given an integer $j\ge 2$ and any tree $\tilde{T}\unord=(T\unord,\ell(T\unord))$, we denote by $\text{attr}(j)$ the node in $\tilde{T}\unord$ that attracted label $j$, and by $\tilde{T}_{< j}\unord$ the tree obtained from $\tilde{T}\unord$ when restricting to labels less than $j$.
Then we define $\pi^{(j)}$ as the map $\pi^{(j)}\colon \tilde{T}\unord \to [0,1]$ that gives the probability that label $j$ will be attracted by node $\text{attr}(j)$ in $\tilde{T}_{< j}\unord$:
\[
\pi^{(j)}(\tilde{T}\unord)=
\begin{cases}
\P\big\{j<_t \text{attr}(j) \: \big| \: \tilde{T}_{< j} \unord\big\},\qquad  |\tilde{T}\unord|\ge j,\\
0, \qquad |\tilde{T}\unord|<j,
\end{cases}
\]
with $\P\big\{j<_t \text{attr}(j) \: \big| \: \tilde{T}_{< j}\unord\big\}$ being determined by the tree evolution processes.

\smallskip 

The point-wise product $\P^{[e]}=\prod_{j=2}^{n}\pi^{(j)}$ is then a probability measure on unordered bucket increasing trees $\mathcal{T}\unord_{n}$ of size $n$.
In the following we denote with a superscript the source of randomness on $\mathcal{T}_n$: $\P\ptree$ for the random tree model and $\P\pgrow$ for the tree evolution process. It holds
\[
\P\pgrow\{\tilde{T}\unord\}=\prod_{j=2}^{n}\pi^{(j)}(\tilde{T}\unord).
\] 

Let $\grad^{+}_{< j}(v)$ be the out-degree of node $v$ when restricting to labels less than $j$. 
Then, a probability measure on ordered trees is obtained via
\[
\P\pgrow\{\tilde{T}\}=\prod_{j=2}^{n}\frac{\pi^{(j)}(\tilde{T}\unord)}{\grad^{+}_{< j}(\text{attr}(j))}.
\]

\subsection{Combinatorial models}
It was already proven in~\cite{BucketPanKu2009} that bucket recursive trees 
generated according to the growth process as stated in Subsection~\ref{secDESCsub1} can be considered as a certain bucket increasing tree family. In the following we extend this result, where, for the sake of completeness, we also collect the findings of~\cite{BucketPanKu2009} for bucket recursive trees.
\begin{theorem}[Combinatorial models for families generated by a stochastic growth rule]
\label{BINCprop1}
The tree evolution processes that generate families of random unordered bucket recursive trees, \Dit{} and \Port, with bucket size $b\ge 1$, can be realized combinatorially by suitably chosen sequences of degree-weights $(\varphi_{k})_{k\ge 0}$ and bucket weights $\psi_{1}, \dots, \psi_{b-1}$. Given an arbitrary ordered bucket increasing tree $T\in\mathcal{T}$ of size $|T| = n$, then it holds that under the random tree model the probability that a new
element $n + 1$ is attracted by a node $v \in T$ with capacity $c(v) = k$ is given by $p(v)=\P\{n+1<_t v\mid c(v)=k\}$ as defined in Definitions~\ref{def0}-\ref{def2} for the corresponding tree evolution process.

\smallskip 

Consequently, both models of randomness for (un)ordered trees coincide: $\P\pgrow=\P\ptree$.

\smallskip
\begin{enumerate}
	\item Bucket recursive trees: a combinatorial model can be obtained from the sequences
\begin{equation*}
   \varphi_{k} = \frac{(b-1)! b^{k}}{k!}, \quad \text{for} \enspace k \ge 0, \qquad \psi_{k} = (k-1)!, \quad \text{for} \enspace 1 \le k \le b-1,
\end{equation*}
such that $\varphi(t)=\sum_{k\ge 0}\varphi_k t^k=(b-1)! \cdot \exp(b t)$.
For this model of bucket recursive trees the exponential generating function $T(z)$ and the total weights $T_n$ are given by
\begin{equation*}
T(z)=\log\big(\frac{1}{1-z}\big),\quad T_n=(n-1)!.
\end{equation*}

\smallskip

\item \Dit: a combinatorial model can be obtained from the sequences
\begin{equation*}
\begin{split}
   \varphi_{k} &= (b-1)!(d-1)^{b-1}\binom{b-1+\frac{1}{d-1}}{b-1}\binom{b(d-1)+1}{k}, \quad \text{for} \enspace k \ge 0,\\
   \psi_{k} &= (k-1)!(d-1)^{k-1}\binom{k-1+\frac{1}{d-1}}{k-1}, \quad \text{for} \enspace 1 \le k \le b-1,
\end{split}
\end{equation*}
such that $\varphi(t)= (b-1)!(d-1)^{n-1}\binom{b-1+\frac{1}{d-1}}{b-1}(1+t)^{b(d-1)+1}$.
For this model of \Dit{} the exponential generating function $T(z)$ and the total weights $T_n=[z^n]T(z)$ are given by
\begin{equation*}
 T(z)=\frac{1}{(1-(d-1)z)^{\frac1{d-1}}}-1,\quad T_n=(n-1)!(d-1)^{n-1}\binom{n-1+\frac{1}{d-1}}{n-1}.
\end{equation*}

\smallskip

\item \Port: a combinatorial model can be obtained from the sequences
\begin{equation*}
\begin{split}
   \varphi_{k} &=(b-1)!(\alpha+1)^{b-1}\binom{b-1-\frac{1}{\alpha+1}}{b-1}\binom{(\alpha+1)b-2+k}{k}, \quad \text{for} \enspace k \ge 0,\\
   \psi_{k} &= (k-1)!(\alpha+1)^{k-1}\binom{k-1-\frac{1}{\alpha+1}}{k-1}, \quad \text{for} \enspace 1 \le k \le b-1,
\end{split}
\end{equation*}
such that $\varphi(t)=\frac{(b-1)!(\alpha+1)^{b-1}\binom{b-1-\frac{1}{\alpha+1}}{b-1}}{(1-t)^{(\alpha+1)b-1}}$.
For this model of \Port{} the exponential generating function $T(z)$ and the total weights $T_n=[z^n]T(z)$ are given by
\begin{equation*}
T(z)=1-(1-(\alpha+1)z)^{\frac1{\alpha+1}},\quad T_n=(n-1)!(\alpha+1)^{n-1}\binom{n-1-\frac{1}{\alpha+1}}{n-1}.
\end{equation*}
\end{enumerate}
\end{theorem}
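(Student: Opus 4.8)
The plan is to prove the core assertion first: that if an ordered bucket increasing tree of size $n$ is drawn under the random tree model $\P\ptree$ and the new largest label $n+1$ is inserted, then a given node $v$ attracts $n+1$ with exactly the probability $p(v)$ of Definitions~\ref{def0}--\ref{def2}. Granting this, the identity $\P\pgrow=\P\ptree$ follows by induction on $n$, the base case $n=1$ being trivial; the ordered statement comes from the same local computation with the $\grad^{+}(v)+1$ insertion positions for a new child treated as equiprobable, and the unordered statement by additionally accounting for the $\prod_{v}\grad^{+}(v)!$ orderings via the embedding $f$. The generating functions $T(z)$ and total weights $T_n$ in items (1)--(3) are then obtained separately from \eqref{eqna1}. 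This extends the argument of~\cite{BucketPanKu2009} for bucket recursive trees to the two new families.

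I would first analyze the local insertion step. Fix an ordered bucket increasing tree $\tilde T$ of size $n$ with its increasing labelling. The size-$(n+1)$ trees reachable by inserting $n+1$ are of two kinds: (a) $n+1$ is placed in an unsaturated node $v$ with $c(v)=k<b$, which raises the weight of $v$ from $\psi_k$ to $\psi_{k+1}$ when $k+1<b$ and from $\psi_{b-1}$ to $\varphi_0$ when $k=b-1$ (vacuous if $b=1$); or (b) $n+1$ forms a new leaf bucket of capacity $1$ below a saturated node $v$ with $\grad^{+}(v)=r$, which is possible in $r+1$ ordered positions, changes the weight of $v$ from $\varphi_r$ to $\varphi_{r+1}$, and contributes the leaf weight $\psi_1$ (or $\varphi_0$ if $b=1$). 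Hence the total weight of all size-$(n+1)$ trees reachable from $\tilde T$ by an insertion into $v$ equals $w(\tilde T)$ times a \emph{local ratio} $\rho(v)$ given by $\psi_{c(v)+1}/\psi_{c(v)}$, by $\varphi_0/\psi_{b-1}$, or by $\psi_1\,(\grad^{+}(v)+1)\,\varphi_{\grad^{+}(v)+1}/\varphi_{\grad^{+}(v)}$ in the three situations.

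The heart of the proof is to check, for each of the three weight sequences in items (1)--(3), that $\rho(v)$ coincides with the numerator of the corresponding $p(v)$ — namely $c(v)$ for bucket recursive trees, $(d-1)c(v)+1-\grad^{+}(v)$ for \dit, and $\grad^{+}(v)+(\alpha+1)c(v)-1$ for \port. This boils down to the elementary identities $\binom{k+\beta}{k}/\binom{k-1+\beta}{k-1}=\frac{k+\beta}{k}$ and $\binom{m+1}{r+1}/\binom{m}{r}=\frac{m+1}{r+1}$ for suitable $\beta$ and $m$, together with the observations that $\psi_1=1$ in all three families (so the leaf factor is harmless) and that the constants multiplying the $\varphi_k$ are tuned precisely so that $\varphi_0/\psi_{b-1}$ evaluates correctly. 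I expect the only genuine friction to be the bookkeeping for the boundary case $c(v)=b-1$, where an unsaturated node turns saturated with out-degree $0$, and for the degenerate bucket size $b=1$, where there are no unsaturated nodes and the new leaf carries weight $\varphi_0$ instead of $\psi_1$.

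Finally I would sum $\rho(v)$ over all $v\in\tilde T$. Using $\sum_{v}c(v)=n$, $\sum_{v}1=\#\{\text{nodes of }\tilde T\}$, and $\sum_{v}\grad^{+}(v)=\#\{\text{nodes of }\tilde T\}-1$, this sum turns out to be independent of the shape of $\tilde T$ and equals $n$, $(d-1)n+1$, and $(\alpha+1)n-1$, respectively — exactly the denominators in Definitions~\ref{def0}--\ref{def2}. Therefore $\P\ptree\{n+1<_t v\mid c(v)=k\}=\rho(v)/\sum_{u}\rho(u)=p(v)$, which is the claim; as a by-product $T_{n+1}=\big(\sum_{u}\rho(u)\big)T_n$, giving an independent check of the stated closed forms for $T_n$. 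Those closed forms, and the functions $T(z)$, follow from the Proposition: one verifies by direct substitution that each claimed $T(z)$ solves $\frac{d^b}{dz^b}T(z)=\varphi(T(z))$ with $T(0)=0$ and $T^{(k)}(0)=\psi_k$, and then reads off $T_n=n!\,[z^n]T(z)$ by the generalised binomial theorem.
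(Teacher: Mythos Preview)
Your proposal is correct and follows essentially the same approach as the paper's proof: compute the local weight ratio $\rho(v)$ for each insertion mode, verify it equals the numerator of $p(v)$ for the three families, and sum over all nodes using the size identity and the node--edge relation to recover the denominators $n$, $(d-1)n+1$, $(\alpha+1)n-1$. The only cosmetic difference is that the paper groups nodes by type (unsaturated of capacity $k$, saturated of out-degree $k$) via counts $m_k$, $n_k$ and then applies the identities \eqref{BINCeqn001}--\eqref{BINCeqn002}, whereas you sum $\rho(v)$ directly over $v$; these are the same computation.
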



\begin{proof}
To prove that these choices of sequences $(\varphi_k)_{k\in\N}$ and $(\psi_k)_{1\le k\le b-1}$ are actually models for bucket increasing trees generated according to the stochastic growth rules defined
in Subsection~\ref{secDESCsub1}, we have to show that the combinatorial families $\mathcal{T}$ of
bucket increasing trees have the same stochastic growth rules as the counterparts created probabilistically. 
Given an arbitrary bucket increasing tree $T \in \mathcal{T}$ of size $|T|=n$, 
then the probability that a new element $n+1$ is attracted by a node $v \in T$ with capacity $c(v)=k$
has to coincide with the corresponding probability stated in Definitions~\ref{def0}-\ref{def2}.

We use now the notation $T \to T'$ to denote that $T'$ is obtained from $T$ with $|T|=n$ by incorporating element $n+1$, i.e., either by attaching element $n+1$ to a saturated node $v \in T$ at one of the $\grad^{+}(v)+1$ possible positions
(recall that bucket increasing trees are per definition ordered trees and thus the order of the subtrees is of relevance)
by creating a new bucket of capacity $1$ containing element $n+1$ or by adding element $n+1$ to an unsaturated node $v \in T$ by increasing
the capacity of $v$ by $1$. If we want to express that node $v \in T$ has attracted the element $n+1$ leading from $T$ to $T'$ we use
the notation $T \xrightarrow{v} T'$. If there exists a stochastic growth rule for a bucket increasing tree family $\mathcal{T}$,
then it must hold that for a given tree $T \in \mathcal{T}$ of size $|T|=n$ and a given node $v \in T$ the probability $p_{T}(v)$, which gives
the probability that element $n+1$ is attracted by node $v \in T$, is given as follows:
\begin{equation}
   p_{T}(v) = \frac{\sum_{T' \in \mathcal{T} : T \xrightarrow{v} T'} w(T')}{\sum_{\tilde{T} \in \mathcal{T} : T \to \tilde{T}} w(\tilde{T})}
   = \frac{\sum_{T' \in \mathcal{T} : T \xrightarrow{v} T'} \frac{w(T')}{w(T)}}{\sum_{\tilde{T} \in \mathcal{T} : T \to \tilde{T}}
   \frac{w(\tilde{T})}{w(T)}}.
\end{equation}
The remaining task is to simplify the expression above into the form stated in Definitions~\ref{def0}-\ref{def2}. For a certain tree $\tilde{T}$ with $T \xrightarrow{u} \tilde{T}$ and $u \in T$ the quotient of the weight of the trees $\tilde{T}$ and $T$
is due to the definition of bucket increasing trees given as follows, where we define for simplicity $\psi_{b} := \varphi_{0}$:
\begin{equation*}
   \frac{w(\tilde{T})}{w(T)} =
   \begin{cases}
      \psi_{1} \frac{\varphi_{k+1}}{\varphi_{k}}, & \quad \text{for} \enspace c(u)=b \quad \text{and} \quad \grad^{+}(u)=k, \\
      \frac{\psi_{k+1}}{\psi_{k}}, & \quad \text{for} \enspace c(u)=k < b.
   \end{cases}
\end{equation*}
For a given tree $T \in \mathcal{T}$ we define by $m_{k} := |\{u \in T : c(u)=k<b\}|$ the number of unsaturated nodes of $T$ with capacity
$k<b$ and by $n_{k} := |\{u \in T : c(u)=b \enspace \text{and} \enspace \grad^{+}(u)=k\}|$ the number of saturated nodes of $T$ with out-degree $k \ge 0$. It holds then
\begin{equation}
\label{BINCeqn001}
   n = \sum_{u \in T} c(u) = \sum_{k=1}^{b-1} k m_{k} + b \sum_{k \ge 0} n_{k}
\end{equation}
and (where we use that there are $k+1$ possibilities of attaching a new node to a saturated node $u \in T$ with out-degree
$\grad^{+}(u)=k$):
\begin{equation*}
   \sum_{\tilde{T} \in \mathcal{T} : T \to \tilde{T}} \frac{w(\tilde{T})}{w(T)}
   = \sum_{k=1}^{b-1} m_{k} \frac{\psi_{k+1}}{\psi_{k}} + \sum_{k \ge 0} n_{k} (k+1) \psi_{1} \frac{\varphi_{k+1}}{\varphi_{k}}.
\end{equation*}
Moreover, we also have the relation
\begin{equation}
\label{BINCeqn002}
 1 = \sum_{k=1}^{b-1} m_{k} - \sum_{k \ge 0} (k-1)n_{k},
\end{equation}
which follows as the difference between the node-sum and edge-sum equation for the tree $T$:
\begin{equation*}
\text{$\#$ nodes} = \sum_{k=1}^{b-1} m_{k} + \sum_{k \ge 0} n_{k}, \qquad \text{$\#$ edges} = \text{$\#$ nodes} - 1 = \sum_{k \ge 0} k n_{k}.
\end{equation*}

First we turn our attention to the family of \Dit{} and the weights as given in Theorem~\ref{BINCprop1}.
We have 
\begin{equation*}
   \sum_{T' \in \mathcal{T} : T \xrightarrow{v} T'} \frac{w(T')}{w(T)} =
   \begin{cases}
      (k+1) \psi_{1} \frac{\varphi_{k+1}}{\varphi_{k}} = b(d-1)+1-k, & \quad \text{for} \enspace c(v)=b \quad \text{and} \quad \grad^+(v)=k, \\
      \frac{\psi_{k+1}}{\psi_{k}} =  k(d-1)+1, & \quad \text{for} \enspace c(v)=k < b,
   \end{cases}
\end{equation*}
and consequently
\begin{equation*}
\begin{split}
   \sum_{\tilde{T} \in \mathcal{T} : T \to \tilde{T}} \frac{w(\tilde{T})}{w(T)} &=
   \sum_{k=1}^{b-1} (k(d-1)+1) m_{k} + \sum_{k \ge 0} n_{k} (b(d-1)+1-k)\\
   &= (d-1)\big(\sum_{k=1}^{b-1} k m_{k} + b \sum_{k \ge 0} n_{k}\big) + \sum_{k=1}^{b-1} m_{k} - \sum_{k \ge 0} (k-1)n_{k} =(d-1)n+1,
\end{split}
\end{equation*}
due to equations~\eqref{BINCeqn001} and \eqref{BINCeqn002}.
Thus, with this choice of weight sequences $(\varphi_k)_k$ and $(\psi_k)_k$, 
the probability $p_{T}(v)$ that in a bucket increasing tree $T$ of size $|T|=n$ the node $v$ with capacity $c(v)=k$ attracts element $n+1$
coincides with the corresponding probability of the stochastic growth rule for \Dit{} given in Definition~\ref{def1}.

We obtain then from equation~\eqref{eqna1} that the exponential generating function $T(z):=\sum_{n\ge 1}T_n\frac{z^{n}}{n!}$
of the total-weight $T_{n}$ of bucket increasing trees of size $n$ satisfies the differential equation
\begin{equation}
   \frac{d^b}{dz^b}T(z)=(b-1)!(d-1)^{b-1}\binom{b-1+\frac{1}{d-1}}{b-1} (1+T(z))^{b(d-1)+1)},
\end{equation}
with initial conditions $T(0)=0$ and $\left.\frac{d^k}{dz^k}T(z)\right|_{z=0}=(k-1)!(d-1)^{k-1}\binom{k-1+\frac{1}{d-1}}{k-1}$, for $1\le k \le b-1$.
The solution of this equation is given by
\begin{equation}
   T(z)=\frac{1}{(1-(d-1)z)^{\frac1{d-1}}}-1=\sum_{n\ge 1}(n-1)!(d-1)^{n-1}\binom{n-1+\frac{1}{d-1}}{n-1} \frac{z^{n}}{n!},
\end{equation}
as can be checked easily
Hence the total weight of all size $n$ \Dit{} is given by $T_n=(n-1)!(d-1)^{n-1}\binom{n-1+\frac{1}{d-1}}{n-1}$ as stated in Theorem~\ref{BINCprop1}.

\smallskip

For \Port{} and weights as given in Theorem~\ref{BINCprop1} we obtain
\begin{equation*}
   \sum_{T' \in \mathcal{T} : T \xrightarrow{v} T'} \frac{w(T')}{w(T)} =
   \begin{cases}
      (k+1) \psi_{1} \frac{\varphi_{k+1}}{\varphi_{k}} = (\alpha+1)b-1+k, & \quad \text{for} \enspace c(v)=b \quad \text{and} \quad \grad^+(v)=k, \\
      \frac{\psi_{k+1}}{\psi_{k}} =  k(\alpha+1)-1, & \quad \text{for} \enspace c(v)=k < b,
   \end{cases}
\end{equation*}
and consequently
\begin{equation*}
\begin{split}
   \sum_{\tilde{T} \in \mathcal{T} : T \to \tilde{T}} \frac{w(\tilde{T})}{w(T)} &=
   \sum_{k=1}^{b-1} (k(\alpha+1)-1) m_{k} + \sum_{k \ge 0} n_{k} ((\alpha+1)b-1+k)\\
   &= (\alpha+1)\big(\sum_{k=1}^{b-1} k m_{k} + b \sum_{k \ge 0} n_{k}\big) - \big(\sum_{k=1}^{b-1} m_{k} - \sum_{k \ge 0} (k-1)n_{k}\big) =(\alpha+1)n-1,
\end{split}
\end{equation*}
due to equations~\eqref{BINCeqn001} and \eqref{BINCeqn002}.
Again, with this choice of weight sequences $(\varphi_k)_k$ and $(\psi_k)_k$, it follows that
the probability $p_{T}(v)$ that in a bucket increasing tree $T$ of size $|T|=n$ the node $v$ with capacity $c(v)=k$ attracts element $n+1$
coincides with the corresponding probability in the stochastic growth rule for \Port{} given in Definition~\ref{def2}.

We obtain then from equation~\eqref{eqna1} that the exponential generating function $T(z):=\sum_{n\ge 1}T_n\frac{z^{n}}{n!}$
of the total-weight $T_{n}$ of \Port{} of size $n$ satisfies the differential equation
\begin{equation}
   \frac{d^b}{dz^b}T(z)=(b-1)!(\alpha+1)^{b-1}\binom{b-1-\frac{1}{\alpha+1}}{b-1} \frac{1}{(1-T(z))^{b(\alpha+1)-1}},
\end{equation}
with initial conditions $T(0)=0$ and $\left.\frac{d^k}{dz^k}T(z)\right|_{z=0}=(k-1)!(\alpha+1)^{k-1}\binom{k-1-\frac{1}{\alpha+1}}{k-1}$, for $1\le k \le b-1$.
Again it can be checked easily that the solution of this equation is given by
\begin{equation}
   T(z)=(1-(\alpha+1)z)^{\frac1{\alpha+1}}-1=\sum_{n\ge 1}(n-1)!(\alpha+1)^{n-1}\binom{n-1-\frac{1}{\alpha+1}}{n-1} \frac{z^{n}}{n!}.
\end{equation}
Hence the total weight of all size $n$ \Port is given by $T_n=(n-1)!(\alpha+1)^{n-1}\binom{n-1-\frac{1}{\alpha+1}}{n-1}$,
which finishes the proof of Theorem~\ref{BINCprop1}
\end{proof}

In Tables~\ref{tab:BICgrowth}-\ref{tab:BICcomb} we summarize the combinatorial properties as well as the growth processes of the bucket increasing tree families considered in this work.
\begin{table}[!htb]
   \begin{center}
   \begin{tabular}{|c|c|c|c|}
   \hline
   \rule[-1ex]{0ex}{3.5ex}
	 \emph{Tree family} & Growth process : $p(v)$\\
   \hline
   \rule[-1.5ex]{0ex}{4.5ex}
	Bucket recursive trees & $\frac{c(v)}{n}$ \\
   \hline
   \rule[-2.7ex]{0ex}{6.5ex}
	 \emph{($b,d$)-ary increasing trees} & $\frac{(d-1)c(v)+1-\grad^{+}(v)}{(d-1)n+1}$  \\
   \hline
   \rule[-2.7ex]{0ex}{6.5ex}
	 \emph{($b,\alpha$)-PORT} & $\frac{\grad^{+}(v)+(\alpha+1)c(v)-1}{(\alpha+1)n-1}$ \\
   \hline
   \end{tabular}
   \caption{Summary: bucket increasing tree families and their growth processes.\label{tab:BICgrowth}}
   \end{center}
\end{table}
\begin{table}[!htb]
   \begin{center}
   \begin{tabular}{|c|c|c|}
   \hline
   \rule[-1ex]{0ex}{3.5ex}
	 \emph{Tree family} & \emph{Degree-weight GF $\varphi(t)$}  & \emph{initial weights} $\psi_k$, $1\le k\le b-1$  \\
   \hline
   \rule[-1.5ex]{0ex}{4.5ex}
	Bucket recursive trees  & $(b-1)!\exp(b\cdot t)$ 
   & $(k-1)!$  \\
   \hline
   \rule[-2.7ex]{0ex}{6.5ex}
	 \emph{($b,d$)-ary increasing trees} & $(b-1)!(d-1)^{n-1}\binom{b-1+\frac{1}{d-1}}{b-1}(1+t)^{b(d-1)+1}$ & 
	$(k-1)!(d-1)^{k-1}\binom{k-1+\frac{1}{d-1}}{k-1}$  \\
   \hline
   \rule[-2.7ex]{0ex}{6.5ex}
	 \emph{($b,\alpha$)-PORT} & $\frac{(b-1)!(\alpha+1)^{b-1}\binom{b-1-\frac{1}{\alpha+1}}{b-1}}{(1-t)^{(\alpha+1)b-1}}$ &
	$(k-1)!(\alpha+1)^{k-1}\binom{k-1-\frac{1}{\alpha+1}}{k-1}$  \\
   \hline
   \end{tabular}
   \caption{Summary: bucket increasing tree families and their combinatorial properties.\label{tab:BICcomb}}
   \end{center}
\end{table}

\section{Clustering process and weight sequences}
Up to this point, no indication has been given how to find the weight sequences in Theorem~\ref{BINCprop1}. 
Similarly, we did not give any motivation behind the definition of the growth processes. Here, as Algorithm~\ref{cluster} we will present
a clustering map  for ordinary increasing trees, case $b=1$. It leads directly to the stated weight sequences and 
serves as a motivation behind the definition of the growth processes and weight sequences.

\smallskip

Let $\mathcal{T}^{[b]}$ denote the family of ordered bucket increasing trees with maximal bucket size $b$, such that $\mathcal{T}^{[1]}$ denotes the family of ordinary ordered increasing trees. Algorithm~\ref{cluster}, which defines a map $\mathcal{C}\colon \mathcal{T}^{[1]}\to\mathcal{T}^{[b]}$, is given as follows; furthermore it is illustrated in Figure~\ref{ClusterFig1}.

\begin{algorithm}
\caption{ClusteringIncreasingTrees$(T,b)$}
\label{cluster}
\begin{algorithmic}[1]
\Input{Ordered Increasing tree $T\in\mathcal{T}^{[1]}$, integer $b\ge 2$}
\Result{Ordered bucket increasing tree $\tilde{T}=\mathcal{C}(T)\in\mathcal{T}^{[b]}$}
\State $V\gets V(T)$, $\tilde{T}\gets \emptyset$
\Do
	\State Choose $v_{\min}\in V$, the node with minimal label
	
	\If{$|\text{subtree}(v_{\min})|\ge b$} 
		\quad $s \gets b$
	\Else 
		\quad s$\gets |\text{subtree}(v_{\min})|$
\EndIf
    \State Merge $s$ smallest labeled nodes $v_{\min}=v_1,v_2,\dots,v_s\in \text{subtree}(v_{\min})$ into new bucket $v$.
		\State $V\gets V\setminus\{v_1,\dots,v_s\}$ 
		\State Redirect all the edges starting at any of these $s$ nodes to the new bucket.
		\State$\tilde{T}\gets \tilde{T}\cup\{v\}$ 
	\doWhile{$V\neq \emptyset$} 
\end{algorithmic}
\end{algorithm}

%
\begin{figure}[!htb]
\begin{center}
\includegraphics[height=4.5cm]{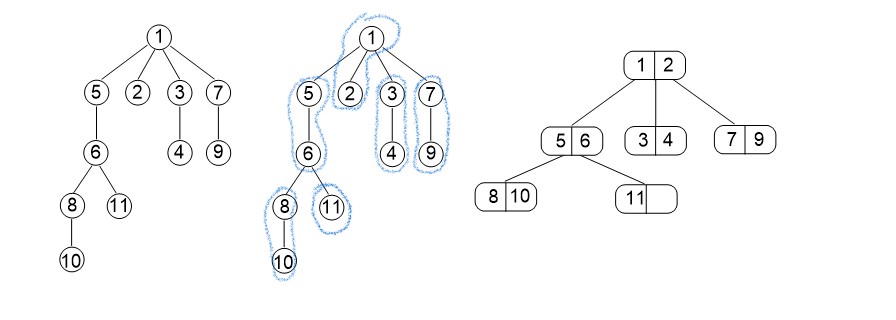}
\end{center}
\caption{A plane-oriented recursive tree $T$ of size eleven, the clustered tree with $b=2$ and the corresponding bilabelled increasing ordered tree $\tilde{T}=\mathcal{C}(T)$.\label{ClusterFig1} }
\end{figure}

\begin{prop}
The map $\mathcal{C}\colon \mathcal{T}^{[1]}\to\mathcal{T}^{[b]}$ defined in Algorithm~\ref{cluster} is not injective, but surjective.
\end{prop}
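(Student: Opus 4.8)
This is the easy half; I would just exhibit two distinct trees in $\mathcal{T}^{[1]}$ with the same image. For the given $b\ge 2$, let $T_{1}$ be the increasing path $1\to 2\to\cdots\to(b+1)$ on $b+1$ nodes and let $T_{2}$ be the increasing tree in which $1\to 2\to\cdots\to(b-1)$ is a path while both $b$ and $b+1$ are children of $b-1$. Running Algorithm~\ref{cluster} on either tree, the first iteration has $v_{\min}=1$ with $|\text{subtree}(v_{\min})|=b+1\ge b$, so $s=b$ and the $b$ smallest nodes $1,\dots,b$ are merged into one saturated bucket; since $b+1$ is the unique node outside this bucket, the single outgoing edge is redirected to it and the next iteration turns $b+1$ into a leaf bucket. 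Hence $\mathcal{C}(T_{1})=\mathcal{C}(T_{2})$ equals the bucket tree with saturated root $\{1,\dots,b\}$ and one child leaf $\{b+1\}$, while $T_{1}\ne T_{2}$. (Alternatively, and even more cheaply, $\mathcal{T}^{[1]}$ contains strictly more trees of size $3$ than $\mathcal{T}^{[b]}$ does.)

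\textbf{Surjectivity, the construction.} For surjectivity I would build, from an arbitrary ordered bucket increasing tree $\tilde{T}\in\mathcal{T}^{[b]}$, a preimage $T\in\mathcal{T}^{[1]}$ recursively: replace the root bucket, carrying labels $\ell_{1}<\ell_{2}<\cdots<\ell_{c}$, by the path $\ell_{1}\to\ell_{2}\to\cdots\to\ell_{c}$, recursively transform the subtrees hanging at the children of the root bucket of $\tilde{T}$, and attach their images at the bottom node $\ell_{c}$ in the same left-to-right order. One checks immediately that $T\in\mathcal{T}^{[1]}$: it carries the label set $\{1,\dots,|\tilde{T}|\}$ with one label per node, each within-bucket path is increasing, and each edge joining $\ell_{c}$, the \emph{largest} label of a bucket, to the top label of one of its child buckets is increasing because condition~$(iii)$ forces the largest label of any parent bucket to be smaller than the smallest label of each of its child buckets.

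\textbf{Surjectivity, the verification.} It then remains to prove $\mathcal{C}(T)=\tilde{T}$, which I would do by strong induction on $n=|\tilde{T}|$ after isolating one structural property of Algorithm~\ref{cluster}: since $v_{\min}$ is always the globally smallest label not yet processed, it is a root of the current working forest and each merge step affects only the component of $v_{\min}$; hence running the algorithm on a forest yields the component-wise result. Now let $\tilde{T}$ have root bucket $\{\ell_{1},\dots,\ell_{c}\}$ with $\ell_{1}=1$. If $c=b$, the subtree of node $1$ in $T$ has size $\ge b$ and its $b$ smallest labels are exactly $\ell_{1}<\cdots<\ell_{b}$ (every label in the attached subtrees exceeds $\ell_{c}=\ell_{b}$), so the first merge produces precisely the saturated bucket $\{\ell_{1},\dots,\ell_{b}\}$, and redirecting the edges leaving $\ell_{b}$ makes the roots of those subtrees its ordered children; if $c<b$, the root bucket is a leaf of $\tilde{T}$, the subtree of node $1$ in $T$ is $\{\ell_{1},\dots,\ell_{c}\}$ of size $<b$, and the first merge produces the childless bucket $\{\ell_{1},\dots,\ell_{c}\}$. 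In both cases the new root bucket coincides with that of $\tilde{T}$, and the residual working forest is exactly the collection of trees produced by applying the construction to the child subtrees of $\tilde{T}$; by the structural property and the induction hypothesis, clustering this forest reproduces those child subtrees in their left-to-right order, whence $\mathcal{C}(T)=\tilde{T}$.

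\textbf{Where the work is.} The only genuinely delicate point is the bookkeeping in the verification step: making precise that in later iterations $\text{subtree}(v_{\min})$ denotes the component of $v_{\min}$ in the evolving working forest, that $v_{\min}$ is always a root of that forest, and that merges never cross components — so that the recursive decomposition of the clustering is legitimate — together with tracking the left-to-right order of children through the edge-redirection operation. Once this structural lemma about Algorithm~\ref{cluster} is nailed down, both the non-injectivity example and the induction are routine.
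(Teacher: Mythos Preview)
Your proposal is correct and follows essentially the same strategy as the paper: for surjectivity you replace each bucket by an increasing chain on its labels and hang the recursively-constructed subtrees at the bottom node, exactly as the paper does (``replace all buckets by corresponding increasing chains''). The paper's proof is considerably terser---it simply asserts $\mathcal{C}(T)=\tilde{T}$ without the inductive verification you supply---so your write-up is in fact more rigorous on this point. For non-injectivity the paper uses a different and slightly simpler observation: all ordinary increasing trees of a fixed size $k\le b$ collapse to the single bucket of capacity $k$; your size-$(b+1)$ example is equally valid (and has the minor advantage of working uniformly even for $b=2$, where the paper's argument would need $k\ge 3$ to yield multiple preimages).
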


\begin{remark}
Note that for arbitrary increasing tree families the map $\mathcal{C}$ does neither take into account the weight sequence $(\phi_k)_{k\ge 0}$ of $\mathcal{T}^{[1]}$, nor it determines $(\varphi_k)_{k\ge 0}$ and $(\psi_k)_{1\le k\le b-1}$ of $\mathcal{T}^{[b]}$.
We will see later, see Examples~\ref{BIJex1} and ~\ref{BIJex2}, that the map can be modified to obtain bijections between ordinary increasing trees and specific families of bucket increasing trees.
\end{remark}

\begin{proof}
Apparently, all trees $T\in\mathcal{T}^{[1]}$ of size $k$, $1\le k\le b$, are mapped to a tree $\mathcal{C}(T)$ consisting only of a single bucket $v$ of capacity $c(v)=k$, thus the map is not injective. 
However, the map $\mathcal{C}$ is surjective: every size $n$ bucket increasing tree can be created by clustering an ordinary increasing tree. Given a bucket increasing tree $\tilde{T}\in\mathcal{T}^{[b]}$, we can replace all buckets 
by corresponding increasing chains, holding the corresponding labels. This gives an ordinary ordered increasing tree $T\in\mathcal{T}^{[1]}$ with $\mathcal{C}(T)=\tilde{T}$.
\end{proof}

\medskip

Given a family of ordinary increasing trees $\mathcal{T}^{[1]}$ with weights $(\phi_k)_{k\ge 0}$.
In order to obtain the weight sequences $(\varphi_k)_{k\ge 0}$ and $(\psi_k)_{1\le k\le b-1}$ of the bucketed families, 
we choose them in a weight preserving way.

\begin{defi}[Weight preserving bucket trees]
Given a family of ordinary increasing trees $\mathcal{T}^{[1]}$ with weights $(\phi_k)_{k\ge 0}$. We call a family $\mathcal{T}^{[b]}$  of bucket increasing trees with weights $(\varphi_k)_{k\ge 0}$ and $(\psi_k)_{1\le k\le b-1}$ \emph{weight preserving}, if it holds,
for all $\tilde{T}\in\mathcal{T}_n$,
\begin{equation}
\label{BINCCondition1}
w(\tilde{T})=\sum_{T\in \mathcal{T}^{[1]}\colon \mathcal{C}(T)=\tilde{T}}w(T).
\end{equation}
\end{defi}

\smallskip

Given $(\phi_k)_{k\ge 0}$, the total weights $T_n$ of the bucket increasing trees, with weight preserving $(\varphi_k)_{k\ge 0}$ and $(\psi_k)_{1\le k\le b-1}$, equals the weight $T_n$ of their ordinary increasing trees counterpart:
\[
\sum_{\tilde{T}\in \mathcal{T}^{[b]}, |\tilde{T}|=n}w(\tilde{T})
= \sum_{\tilde{T}\in \mathcal{T}^{[b]}, |\tilde{T}|=n}\sum_{T\in \mathcal{T}^{[1]}\colon \mathcal{C}(T)=\tilde{T}}w(T)
= \sum_{T\in \mathcal{T}^{[1]}, |T|=n} w(T) = T_n.
\]

Moreover, we directly obtain from~\eqref{BINCCondition1}
\[
\psi_k=T_k, \quad 1\le k\le b-1,\qquad \varphi_0=T_b,
\]
where $T_k$ denotes the total weight of size $k$ trees of the ordinary increasing tree family.

\smallskip

Next, we consider star-shaped trees $\tilde{T}$ of size $b+k$ and root degree $k$. We get
\[
w(\tilde{T})=\varphi_k\cdot \psi_1^k.
\]
On the other hand, we can describe all trees $T\in \mathcal{T}^{[1]}$ satisfying $\mathcal{C}(T)=\tilde{T}$. Namely, all such trees can be created by taking an arbitrary tree of size $b$ and attaching $k$ new nodes labelled $k+1,\dots,k+b$ to any of the existing $b$ nodes.
We denote by $v_1,\dots,v_b$ the nodes labelled $1,\dots,b$ in $T$, by $\grad^{+}_{\le b}(v_{m})$ the out-degree of node $v_{m}$ when restricting to labels $\le b$, and by $j_{m}$ the number of nodes with labels $> b$ attached to $v_{m}$. This gives
\[
\sum_{T\in \mathcal{T}^{[1]}\colon \mathcal{C}(T)=\tilde{T}}w(T)
=\phi_0^k\sum_{T\in \mathcal{T}^{[1]}\colon |T|=b}w(T)\sum_{\substack{\sum_{s=1}^{b}j_s=k,\\j_s\ge 0}}\prod_{m=1}^{b}\frac{\phi_{\grad^{+}_{\le b}(v_m)+j_m}}{\phi_{\grad^{+}_{\le b}(v_m)}}\binom{\grad^{+}_{\le b}(v_m)+j_m}{j_m}.
\]
We use $\psi_1=\phi_0=T_1$ and obtain for $\varphi_k$ the expression
\begin{equation}
\label{BINCCondition2}
\varphi_k=\sum_{T\in \mathcal{T}^{[1]}\colon |T|=b}w(T)\sum_{\substack{\sum_{s=1}^{b}j_s=k,\\j_s\ge 0}}\prod_{m=1}^{b}\frac{\phi_{\grad^{+}_{\le b}(v_m)+j_m}}{\phi_{\grad^{+}_{\le b}(v_m)}}\binom{\grad^{+}_{\le b}(v_m)+j_m}{j_m}.
\end{equation}

\medskip

As mentioned earlier, it has been shown in~\cite{PanPro2007} that only three families of increasing trees can be constructed using a stochastic growth process. These are ordinary recursive trees, generalized plane-oriented recursive trees and $d$-ary increasing trees, and they are determined by the degree-weight sequences $\phi_k=\frac1{k!}$, $\phi_k=\binom{k+\alpha-1}{k}$ and $\phi_k=\binom{d}{k}$, $k\ge 0$, respectively. The three weight sequences $(\phi_k)_{k\ge 0}$ together with~\eqref{BINCCondition2} directly lead to the result of Theorem~\ref{BINCprop1}. Below we present the calculations for \Port{} derived from generalized plane-oriented recursive trees with $\phi_k=\binom{k+\alpha-1}{k}$. 
Using 
\[
\frac{\phi_{\grad^{+}_{\le b}(v_m)+j_m}}{\phi_{\grad^{+}_{\le b}(v_m)}}\binom{\grad^{+}_{\le b}(v_m)+j_m}{j_m}=
\binom{\grad^{+}_{\le b}(v_m)+j_m+\alpha-1}{j_{m}}
\]
and 
\begin{align*}
&\sum_{\substack{\sum_{s=1}^{b}j_s=k,\\j_s\ge 0}}\prod_{m=1}^{b}\binom{\grad^{+}_{\le b}(v_m)+j_m+\alpha-1}{j_m}
=\sum_{\substack{\sum_{s=1}^{b}j_s=k,\\j_s\ge 0}}\prod_{m=1}^{b}[t^{j_m}]\frac{1}{(1-t)^{\grad^{+}_{\le b}(v_m)+\alpha}}\\
&\quad=[t^{k}]\frac{1}{(1-t)^{b-1+b\alpha}}=\binom{b+b\alpha+k-2}{k},
\end{align*}
we get indeed the result stated in Theorem~\ref{BINCprop1}:
\begin{equation}
\label{BINCCondition}
\varphi_k=\sum_{T\in \mathcal{T}_1\colon |T|=b}w(T)\cdot \binom{b+b\alpha+k-2}{k}
=T_b\cdot \binom{b+b\alpha+k-2}{k}.
\end{equation}

\smallskip

\begin{example}[Bijection between PORTs and three-bundled bilabelled bucket ordered increasing trees]
\label{BIJex1}
Following the terminology of Janson et al.~\cite{Janson2011}, we call an ordered (bucket) increasing tree $d$-bundled, 
if every node has $d$ positions, with a (possibly empty) sequence of $d$-bundled trees (with disjoint sets of
labels) attached to each position. Equivalently, one may think of each node of the ordered tree as having $d-1$ separation walls, which can be regarded as a special type of edges or half-edges, that separate the subtrees of each node into $d$ bundles. It is known that $d$-bundled increasing trees correspond to ordinary ordered increasing trees with degree-weight generating function $\varphi(t)=1/(1-t)^{d}$; analogous the family $\mathcal{B}_{d}$ of $d$-bundled bucket increasing trees corresponds to the family of bucket ordered increasing trees with $\varphi(t) = 1/(1-t)^{d}$.

Given an ordinary plane-oriented recursive tree, we modify the clustering map $\mathcal{C}$ defined in Algorithm~\ref{cluster} to a map $\mathcal{C}^{\ast}$, such that the corresponding bilabelled bucket ordered increasing tree is three-bundled: when redirecting edges to the buckets, we keep track of their previous ancestor. Given a resulting bucket $v=(\ell_{\min},\ell_{\max})$, edges originally incident to the node with smaller label $\ell_{\min}$ that are lying to the left of the node with larger label $\ell_{\max}$ are grouped into the first bundle, whereas edges incident to $\ell_{\min}$ lying to the right of $\ell_{\max}$ are grouped into the third bundle. All edges incident to the node with larger label $\ell_{\max}$ are put into the second bundle, the center. 

\smallskip

Then, the map $\mathcal{C}_\ast\colon\mathcal{T}^{[1]}\to\mathcal{B}_3$, with bucket size $b=2$ of bucket three-bundled ordered increasing trees, is a bijection. This bijection is illustrated in Figure~\ref{ClusterFig2}. As mentioned before, it also explains the corresponding enumerative result of~\cite{Hwang2016}: the number of three-bundled ordered increasing trees with degree-weight generating function $\varphi(t)=1/(1-t)^{3}$ and $\psi_1=1$, and thus of the corresponding family of increasing diamonds, see Theorem~\ref{bijDiamondBucket}, is given by $(2n-3)!!$.

\begin{figure}[!htb]
\begin{center}
\includegraphics[height=4.5cm]{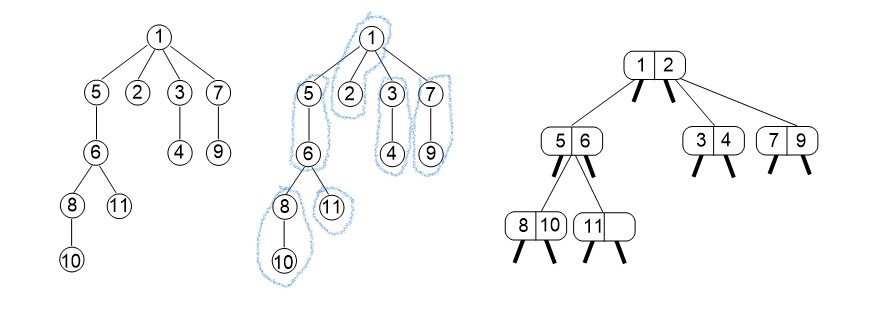}
\end{center}
\caption{The plane-oriented recursive tree $T$ of size eleven from Figure~\ref{ClusterFig1}, the corresponding clustered tree with $b=2$, and the corresponding bilabelled $3$-bundled increasing ordered tree $\tilde{T}=\mathcal{C}^{\ast}(T)$.\label{ClusterFig2} }
\end{figure}

\end{example}

\begin{example}[Bijection between recursive trees and two-bundled bilabelled bucket recursive trees]
\label{BIJex2}
We introduce two-bundled bucket recursive trees $\mathcal{BR}_2$ with bucket size two. Each saturated node has an half-edge that separates its subtrees into two bundles. 
The tree family corresponds to the degree-weight generating function $\varphi(t)=\exp(2t)$ and $\psi_1=1$. Then, we modify the clustering map $\mathcal{C}$ again to a map $\hat{\mathcal{C}}$, such that the corresponding bilabelled bucket recursive tree is two-bundled: when redirecting edges to the buckets, we keep track of their previous ancestor. Given a resulting bucket $v=(\ell_{\min},\ell_{\max})$, all edges originally incident to the smaller label $\ell_{\min}$ are grouped into the first bundle, whereas all edges incident to the larger label $\ell_{\max}$ are put into the second bundle. 

\smallskip

Then, the map $\hat{\mathcal{C}}\colon \mathcal{R} \to \mathcal{BR}_2$ from ordinary recursive trees to two-bundled bilabelled bucket recursive trees is a bijection. 
\end{example}

Similar bijections for bucket size $b=2$ can be obtained also for binary increasing trees and variants.

\section{Increasing diamonds and bucket increasing trees}
\subsection{Symbolic description and bijections}
We consider trees with bucket size $b=2$, degree-weight generating function $\varphi(t)$ and weight $\psi = \psi_{1} = 1$. 
As stated in Example~\ref{example:bilabelled} they allow the symbolic description
\begin{equation}
\label{DiamondEqn1}
\mathcal{T} = \mathcal{Z}^{\Box} +\mathcal{Z}^{\Box} \ast \left(\mathcal{Z}^{\Box} \ast \varphi\big(\mathcal{T}\big)\right).
\end{equation}
The exponential generating function $T(z)$ satisfies the first-order differential equation
\[
    T'(z) = \sqrt{1+2\cdot\Phi(T(z))}, \quad T(0)=0,
\]
with $\Phi(x)=\int_{0}^{x} \varphi(t)dt$. Moreover, $T=T(z)$ is implicitly given via
\[
\int_{0}^{T}\frac{dx}{\sqrt{1+2\cdot\Phi(x)}}=z.
\]
On the other hand, increasing diamonds as proposed in \cite{Hwang2016} are defined by the symbolic equation
\begin{equation}
\label{DiamondEqn2}
\mathcal{F} = \mathcal{Z}^{\Box} +\mathcal{Z}^{\Box} \ast  \varphi\big(\mathcal{F}\big) \ast \mathcal{Z}^{\blacksquare},
\end{equation}
where the latter boxed product constrains the largest label. They constitute a combinatorial family of labelled, directed acyclic graphs (DAGs) with a source and a sink such that the labels along any path are increasing. The functional operation $\varphi$ occurring in \eqref{DiamondEqn2} is here specifying possible degrees respectively their weights, see \cite{Hwang2016} for more details. Figure~\ref{DiamondFig1} illustrates two examples of increasing diamonds.

\begin{figure}[!htb]
\begin{center}
\begin{minipage}{13cm}
\includegraphics[height=5cm]{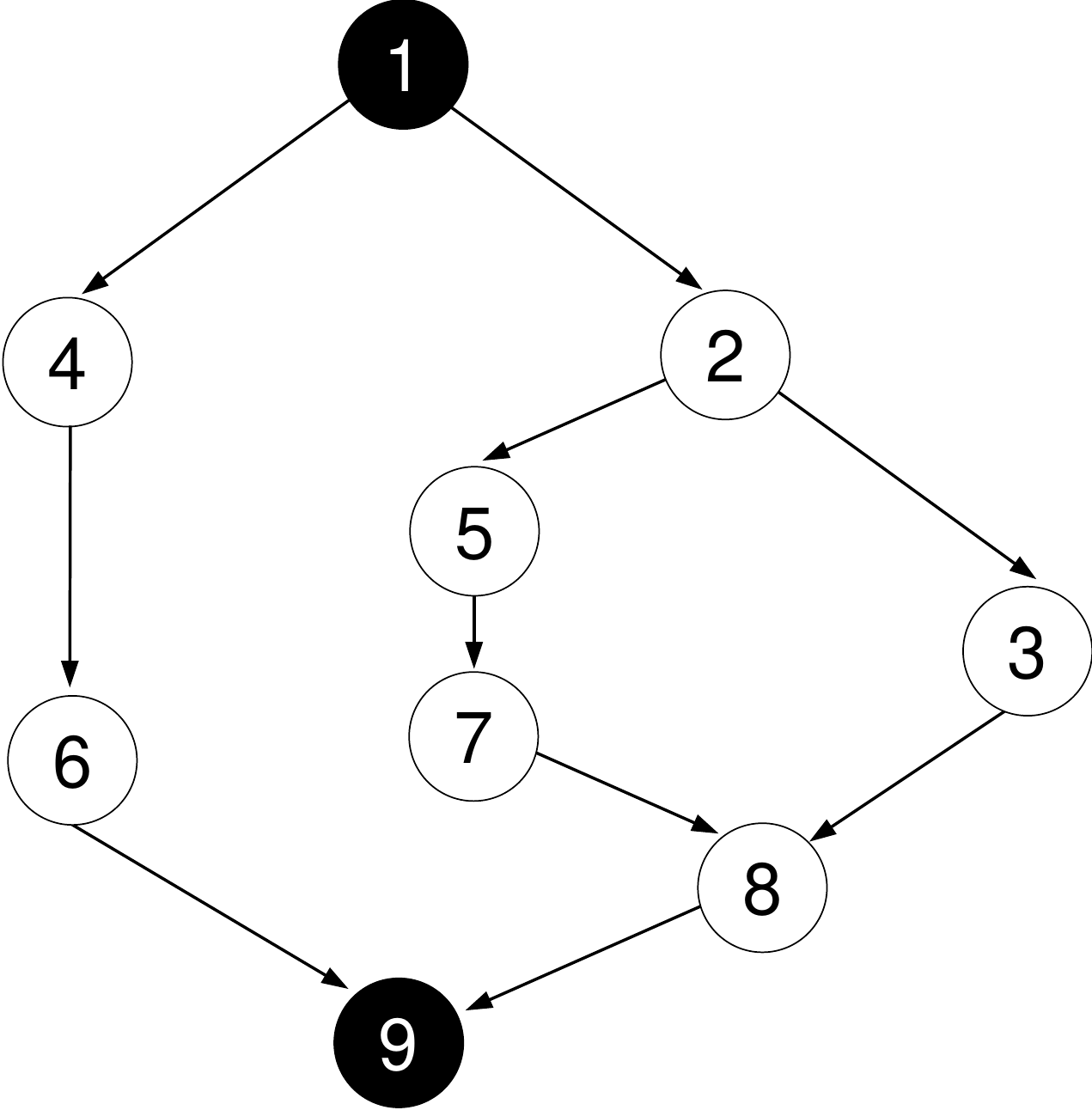}
\hspace{1cm}
\includegraphics[height=5cm]{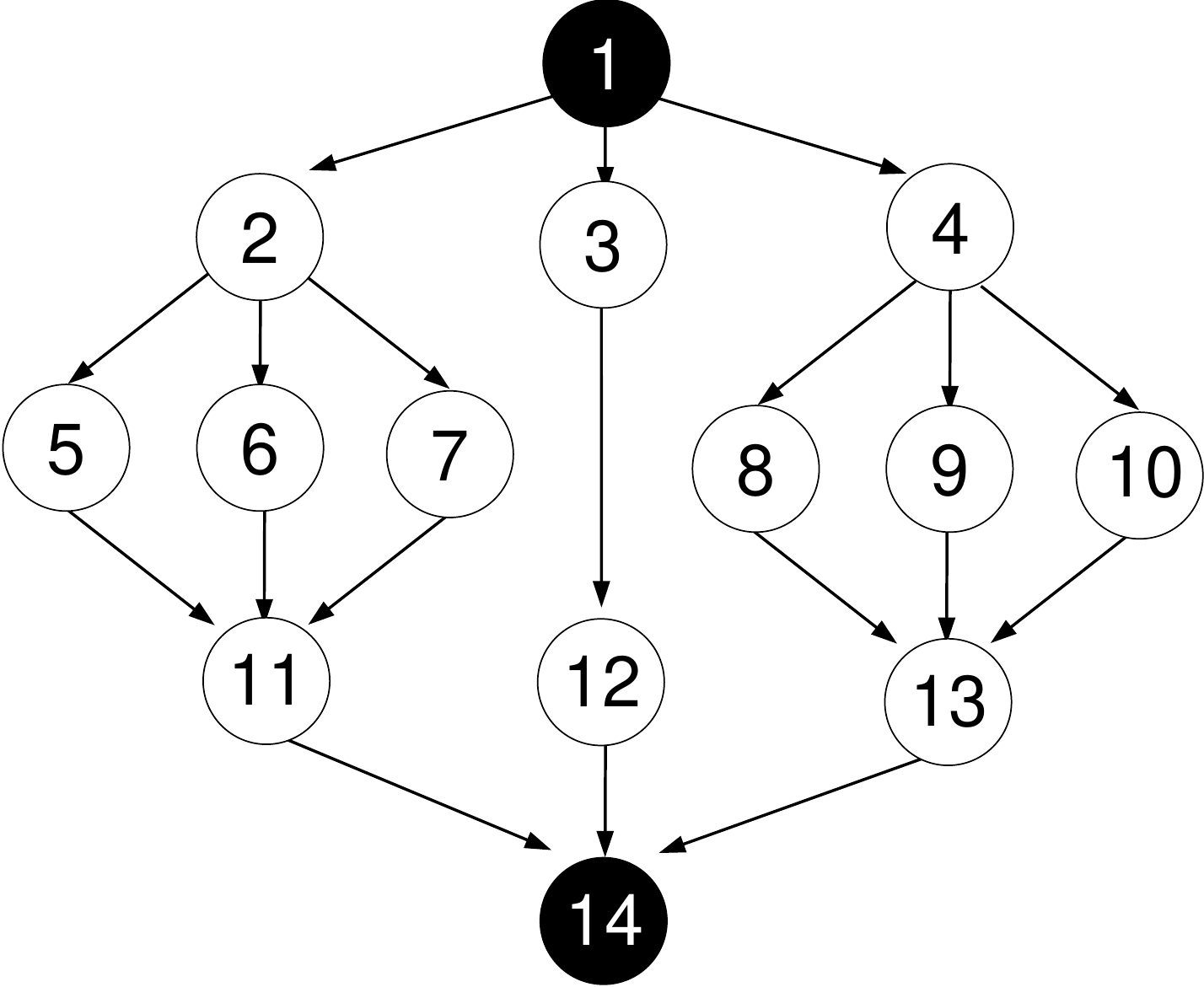}
\end{minipage}
\end{center}
\caption{Two different increasing diamonds of size nine and fourteen, respectively.\label{DiamondFig1} }
\end{figure}

The symbolic description \eqref{DiamondEqn2} of increasing diamonds leads to exactly the same exponential generating function as for bucket increasing trees with bucket size $b=2$ described formally via \eqref{DiamondEqn1}.
Thus, it is natural to ask for a bijection between these combinatorial objects of a given size, where the size of an increasing diamond is given by the number of its nodes. 

Before stating such a bijection we note that it is convenient to think of an increasing diamond $F\in\mathcal{F}$ as having three types of nodes stemming from the recursive combinatorial construction given in \eqref{DiamondEqn2}, which partitions the set of nodes into small nodes, inner nodes and large nodes, i.e., 
\begin{equation*}
   V(F) = V_{S}(F) \: \dot{\cup} \: V_{I}(F) \: \dot{\cup} \: V_{L}(F).
\end{equation*}
Namely, if $F$ has size one its only vertex is assigned to an inner node. Otherwise, the smallest node and the largest node of $F$ are assigned to the respective node type; furthermore by removing these two nodes of $F$ we obtain a (possibly empty) sequence of increasing diamonds $F_{1}, \dots, F_{r}$, and to each of these structures we apply this assignment recursively. 

\begin{theorem}
\label{bijDiamondBucket}
The family of ordered increasing diamonds $\mathcal{F}_n$ of size $n$, with degree-weight generating function $\varphi(t)$,
are in a natural bijection $\mathcal{M}$ with ordered bucket increasing trees $\mathcal{T}_n$ of size $n$ with the same degree-weight generating function, $\mathcal{F}_n\cong \mathcal{T}_n$.
\end{theorem}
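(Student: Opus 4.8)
The plan is to construct the bijection $\mathcal{M}\colon\mathcal{F}_n\to\mathcal{T}_n$ recursively, mirroring the parallel recursive decompositions \eqref{DiamondEqn1} and \eqref{DiamondEqn2}, and to verify that it is weight-preserving. Both symbolic equations, read as $\mathcal{F}=\mathcal{Z}^\Box + \mathcal{Z}^\Box\ast\varphi(\mathcal{F})\ast\mathcal{Z}^\blacksquare$ and $\mathcal{T}=\mathcal{Z}^\Box + \mathcal{Z}^\Box\ast(\mathcal{Z}^\Box\ast\varphi(\mathcal{T}))$, say: a size-one object is atomic (for $\mathcal{F}$ an inner node carrying label $1$; for $\mathcal{T}$ an unsaturated root bucket of capacity one); a larger object is obtained by distinguishing the smallest label, the largest label, and a (possibly empty) ordered sequence $F_1,\dots,F_r$ of strictly smaller objects of the same family, weighted by $\varphi_r$. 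This is exactly the three-type partition $V(F)=V_S(F)\,\dot\cup\,V_I(F)\,\dot\cup\,V_L(F)$ already set up in the excerpt. So I would define $\mathcal{M}(F)$ for $|F|=1$ to be the single bucket of capacity one, and for $|F|\ge 2$, writing $F$ in the form (smallest node $s$, largest node $\ell$, sequence $F_1,\dots,F_r$), set $\mathcal{M}(F)$ to be the bucket increasing tree whose root is the saturated capacity-$2$ bucket holding the two labels of $s$ and $\ell$ — the boxed product $\mathcal{Z}^\Box\ast\mathcal{Z}^\Box$ forces the two globally smallest labels into that root bucket in increasing order — with children subtrees $\mathcal{M}(F_1),\dots,\mathcal{M}(F_r)$ in that left-to-right order.

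\textbf{Verification steps.} First I would check that $\mathcal{M}$ is well defined and lands in $\mathcal{T}_n$: the recursion terminates since each $F_i$ is strictly smaller, sizes add up ($|F|=2+\sum_i|F_i|$ corresponds to root capacity $2$ plus the subtree sizes), and the increasing-label condition along root-to-node paths in $\mathcal{M}(F)$ follows from the increasing-path condition in $F$ together with the fact that $s$ carries the smallest and $\ell$ the second-distinguished label of the whole structure. Second, injectivity and surjectivity: given a bucket increasing tree in $\mathcal{T}_n$, either it is a single capacity-one bucket (preimage: the size-one diamond) or its root is a saturated capacity-$2$ bucket with ordered subtrees $T_1,\dots,T_r$; by induction each $T_i=\mathcal{M}(F_i)$ for a unique $F_i$, and reassembling $s,\ell,F_1,\dots,F_r$ recovers a unique $F$ with $\mathcal{M}(F)=T$ — here I must use that in \eqref{DiamondEqn2} the inner nodes of the top-level diamond are precisely $s$ and $\ell$, so the decomposition of $F$ is unique, and that an internal node of a bucket increasing tree must be saturated, so a non-root bucket of capacity $2$ again decomposes in the same way. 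Third, weight preservation: $w(F)=\prod_{v\in V_I(F),\,|F_v|\ge 2\text{ at }v}\varphi_{(\text{number of component diamonds at }v)}$, and this product matches $w(\mathcal{M}(F))=\prod_{v\in\mathcal{M}(F):\,c(v)=2}\varphi_{\grad^+(v)}$ term by term, since each recursive step contributes one factor $\varphi_r$ on both sides and the unsaturated leaves contribute $\psi_1=1$; consequently $\mathcal{M}$ respects size and weight, giving $\mathcal{F}_n\cong\mathcal{T}_n$ as claimed, and in particular $|\mathcal{F}_n|$-weighted count equals $T_n$, consistent with the generating-function identity already noted.

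\textbf{Main obstacle.} The routine part is the generating-function bookkeeping, which is already done in Examples~\ref{example:bilabelled} and the discussion around \eqref{DiamondEqn1}–\eqref{DiamondEqn2}; the delicate point is pinning down the combinatorial decomposition of an increasing diamond precisely enough that the recursion is forced to be bijective. Concretely, I expect the main work to be in arguing that removing the source and sink of $F$ leaves a canonical ordered sequence of increasing diamonds (rather than just an unordered set or a more complex gluing), i.e.\ that the boxed/blackboxed products in \eqref{DiamondEqn2} really do induce a unique ordered list $F_1,\dots,F_r$, and symmetrically that the left-to-right order of the root's subtrees in a bucket increasing tree is the matching data. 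Once the ``one step of the recursion is a bijection on the level of decompositions, and $\varphi_r\leftrightarrow\varphi_r$'' claim is nailed down, the induction on $n$ closes immediately. It is likely cleanest to phrase the whole argument as: the two symbolic specifications \eqref{DiamondEqn1} and \eqref{DiamondEqn2} are isomorphic as labelled combinatorial classes via the term-by-term dictionary $\mathcal{Z}^\blacksquare\mapsto\mathcal{Z}^\Box$ (legitimate because in a product $\mathcal{A}^\Box\ast\mathcal{B}\ast\mathcal{C}^\blacksquare$ of nonempty pieces the constraints on smallest and largest label are compatible and the middle factor $\varphi(\cdot)$ is unchanged), and $\mathcal{M}$ is the induced isomorphism of the least fixed points; this makes the bijectivity and weight-preservation structural rather than ad hoc.
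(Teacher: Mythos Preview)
Your recursion has a genuine gap at the very first step. You propose that the root bucket of $\mathcal{M}(F)$ carry ``the two labels of $s$ and $\ell$'', where $s$ is the source (smallest label) and $\ell$ is the sink (largest label) of $F$. But in a bucket increasing tree the labels along every root-to-leaf path must increase, and in particular every label in the root bucket must be smaller than every label in each subtree. If $|F|=n\ge 3$ the root bucket would then contain the label $n$, while the subtrees $\mathcal{M}(F_1),\dots,\mathcal{M}(F_r)$ carry labels from $\{2,\dots,n-1\}$; the increasing condition is violated. Your own parenthetical remark that ``the boxed product $\mathcal{Z}^\Box\ast\mathcal{Z}^\Box$ forces the two globally smallest labels into that root bucket'' is correct and directly contradicts the construction you just gave: the root bucket of an object in $\mathcal{T}_n$ must contain labels $1$ and $2$, not $1$ and $n$.

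The structural isomorphism you sketch at the end, $\mathcal{Z}^\blacksquare\mapsto\mathcal{Z}^\Box$, is the right idea but it is not label-preserving: on a label set $\{\ell_1<\dots<\ell_n\}$, the substructure in $\mathcal{Z}^\Box\ast\varphi(\cdot)\ast\mathcal{Z}^\blacksquare$ lives on $\{\ell_2,\dots,\ell_{n-1}\}$, whereas in $\mathcal{Z}^\Box\ast(\mathcal{Z}^\Box\ast\varphi(\cdot))$ it lives on $\{\ell_3,\dots,\ell_n\}$. Any bijection realizing this dictionary must therefore relabel at every level of the recursion, and these relabelings compound. This is precisely what the paper does: it first builds an intermediate ``increasing--decreasing bilabelled tree'' (root bucket $= (\ell_1\mid\ell_n)$, recursively) and then applies, at each recursive level, the cyclic permutation $(\ell_1)(\ell_2\,\ell_3\cdots\ell_n)$, which sends the sink label $\ell_n$ down to $\ell_2$ and shifts the subtree labels from $\{\ell_2,\dots,\ell_{n-1}\}$ to $\{\ell_3,\dots,\ell_n\}$. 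Your proposal is missing this relabeling step; once you insert it, the induction closes, but the map is no longer the one you described.
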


\begin{remark}
The bijection $\mathcal{M}$ together with the characterization given in Theorem~\ref{BINCprop1} provides 
a characterization of families $\mathcal{F}$ of increasing diamonds with a total weight $T_n$, resembling the weights of ordinary increasing trees. 
This explains the strikingly simple formulas observed by Bodini et al.~\cite{Hwang2016}.
\end{remark}

\begin{proof}
Our bijection $\mathcal{M} : \mathcal{F}_{n} \to \mathcal{T}_{n}$ uses two steps: first we construct recursively intermediate objects that we call increasing-decreasing bilabelled trees, where each bucket holds the smallest as well as the largest label in its subtree. Thus, when considering a tree $\hat{T} \in \mathcal{ID}$ of this family and restricting to the smaller or larger labels in each bucket, it is an increasing or decreasing tree, respectively.
Then we give a recursive procedure that transforms, by using cyclic permutations of the labels, such trees to bucket increasing trees of the same shape. These recursive procedures are given as Algorithms~\ref{Alg:DiamondToIncdesc}-\ref{Alg:IncdescToBucket}; combining them leads to Algorithm~\ref{diamond} and thus the map $\mathcal{M}$. We further observe that the weight sequence $(\varphi_k)_{k\ge 0}$ is not involved in the bijection, as the weights are directly preserved: if a certain substructure of an increasing diamonds $F$ decomposes into the node with smallest label, the node with largest label and $r \ge 0$ increasing subdiamonds, then the corresponding subtree in the bucket increasing tree $\tilde{T} = \mathcal{M}(F)$ decomposes into the root bucket and exactly $r$ subtrees. We also note that the reverse map $\mathcal{M}^{-1}$ is readily obtained by inverting the recursive procedures in a natural way.
\begin{algorithm}
\caption{DiamondToBucket$(F)$}
\label{diamond}
\begin{algorithmic}[1]
\Input{Increasing Diamond $F\in\mathcal{F}$}
\Result{Bucket size two increasing tree $\tilde{T}\in\mathcal{T}$ of same size and with same label set}
\State $\hat{T} \gets \text{DiamondToIncdesc}(F)$
\State $\tilde{T} \gets \text{IncdescToBucket}(\hat{T})$
\State Return $\tilde{T}$
\end{algorithmic}
\end{algorithm}
\end{proof}

\begin{algorithm}
\caption{DiamondToIncdesc$(F)$}
\label{Alg:DiamondToIncdesc}
\begin{algorithmic}[1]
\Input{Increasing diamond $F \in \mathcal{F}$}
\Result{Increasing-decreasing bilabelled tree $\hat{T} \in \mathcal{ID}$ of same size and with same label set}
\State $\hat{F} \gets F$
\State $n \gets$ size of $\hat{F}$
\If{$n=1$} 
\State form a bucket $\hat{b} = (v_{I} \mid )$ with $v_{I}$ the single node of $\hat{F}$
\State $\hat{T} \gets$ tree consisting of single bucket $\hat{b}$
\State Return $\hat{T}$
\Else 
\State form a bucket $\hat{b} = (v_{S} \mid v_{L})$ with $v_{S}$ and $v_{L}$ the nodes with smallest and largest label of $\hat{F}$
\State $\hat{F}_{1}, \dots, \hat{F}_{r} \gets$ sequence of increasing diamonds obtained by removing $v_{S}$ and $v_{L}$ from $\hat{F}$
\For{$j=1$ to $r$}
\State $\hat{T}_{j} \gets \text{DiamondToIncdesc}(\hat{F}_{j})$
\EndFor
\State $\hat{T} \gets$ tree with root bucket $\hat{b}$ and subtrees $\hat{T}_{1}, \dots, \hat{T}_{r}$ attached to it
\State Return $\hat{T}$
\EndIf
\end{algorithmic}
\end{algorithm}

\begin{algorithm}
\caption{IncdescToBucket$(\hat{T})$}
\label{Alg:IncdescToBucket}
\begin{algorithmic}[1]
\Input{Increasing-decreasing bilabelled tree $\hat{T}\in\mathcal{ID}$}
\Result{Bucket size $2$ increasing tree $\tilde{T}\in\mathcal{T}$ of same size, same shape and with same label set}
\State $\tilde{T} \gets \hat{T}$
\State $n \gets$ size of $\tilde{T}$
\If{$n=1$} 
\State Return $\tilde{T}$
	\Else
\State let $\ell_{1} < \ell_{2} < \dots <\ell_{n}$ be the labels of $\tilde{T}$ in increasing order
\State define $\pi \gets (\ell_{1})(\ell_{2} \: \ell_{3} \: \dots \ell_{n})$ permutation in cycle notation
\State permute labels of $\tilde{T}$ according to permutation $\pi$
\State $\tilde{b} \gets$ root bucket of $\tilde{T}$
\State $\hat{T}_{1}, \hat{T}_{2}, \dots, \hat{T}_{r} \gets$ subtrees of root of $\tilde{T}$
\For{$j=1$ to $r$}
\State $\tilde{T}_{j} \gets \text{IncdescToBucket}(\hat{T}_{j})$
\EndFor
\State $\tilde{T} \gets$ tree with root bucket $\tilde{b}$ and subtrees $\tilde{T}_{1}, \dots, \tilde{T}_{r}$ attached to it
\State Return $\tilde{T}$
\EndIf
\end{algorithmic}
\end{algorithm}

The bijection $\mathcal{M}$ is illustrated in Figures~\ref{Fig:BijectionM_Diamond1}-\ref{Fig:BijectionM_Diamond2} for the increasing diamonds given in Figure~\ref{DiamondFig1}.


\begin{figure}[!htb]
\begin{center}
\begin{minipage}{14cm}
\includegraphics[width=4cm]{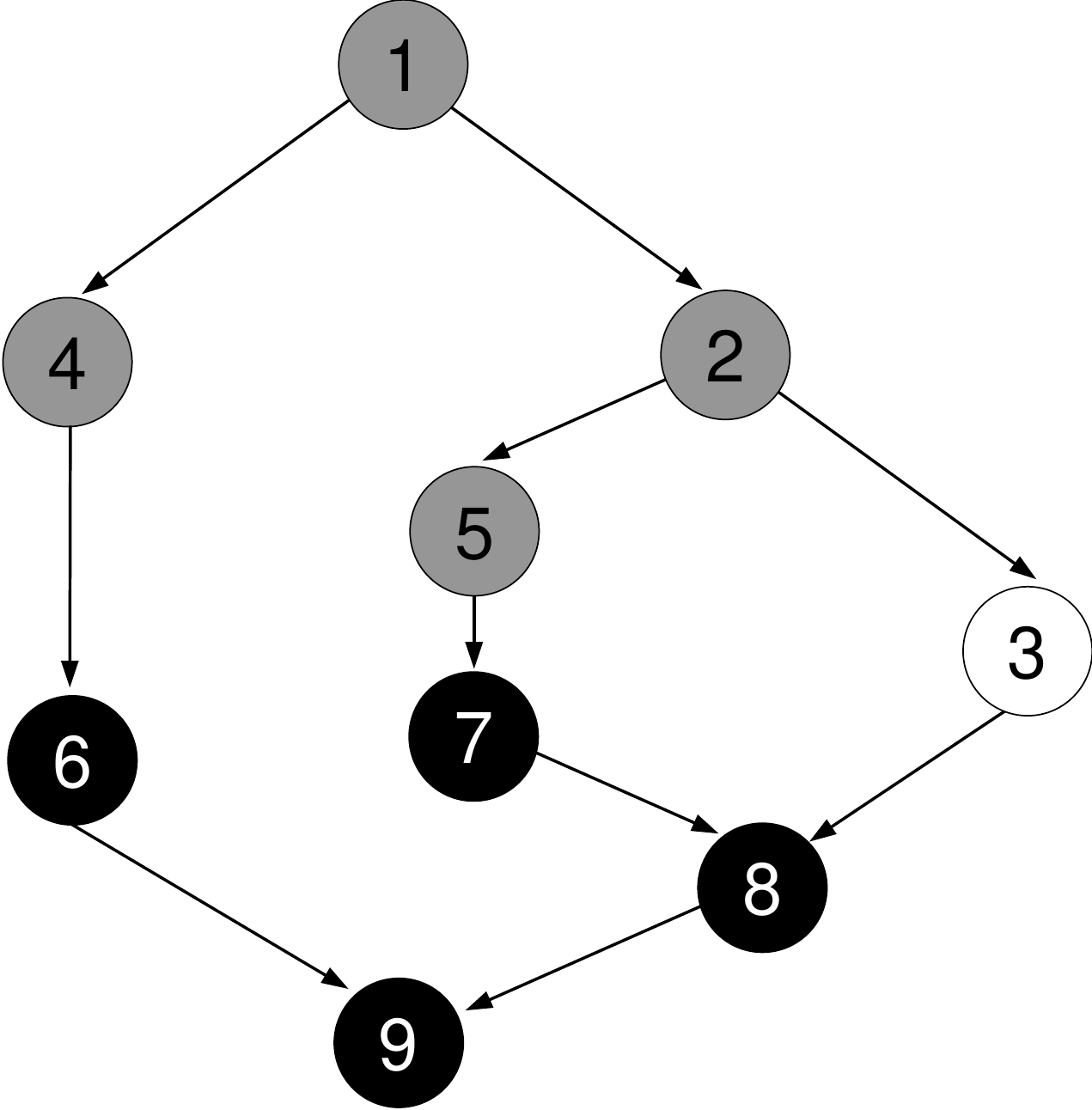}
\hspace{0.1cm}
\includegraphics[width=4.5cm]{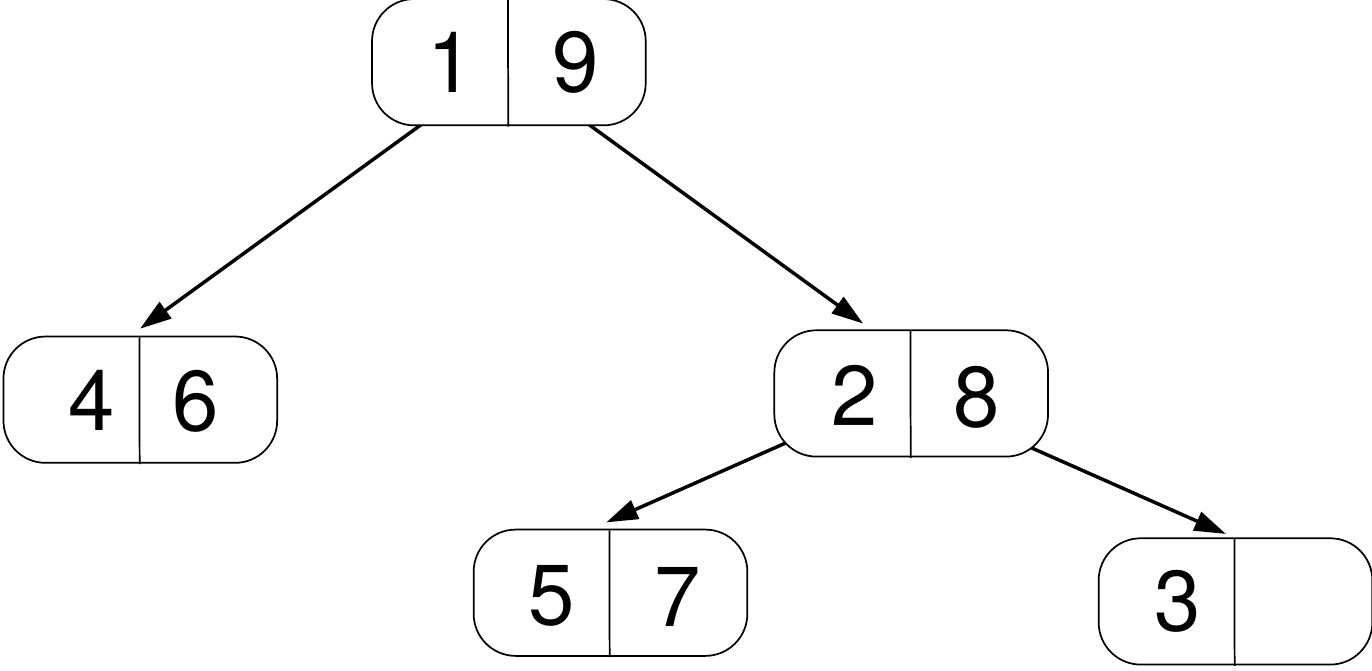}
\hspace{0.1cm}
\includegraphics[width=4.5cm]{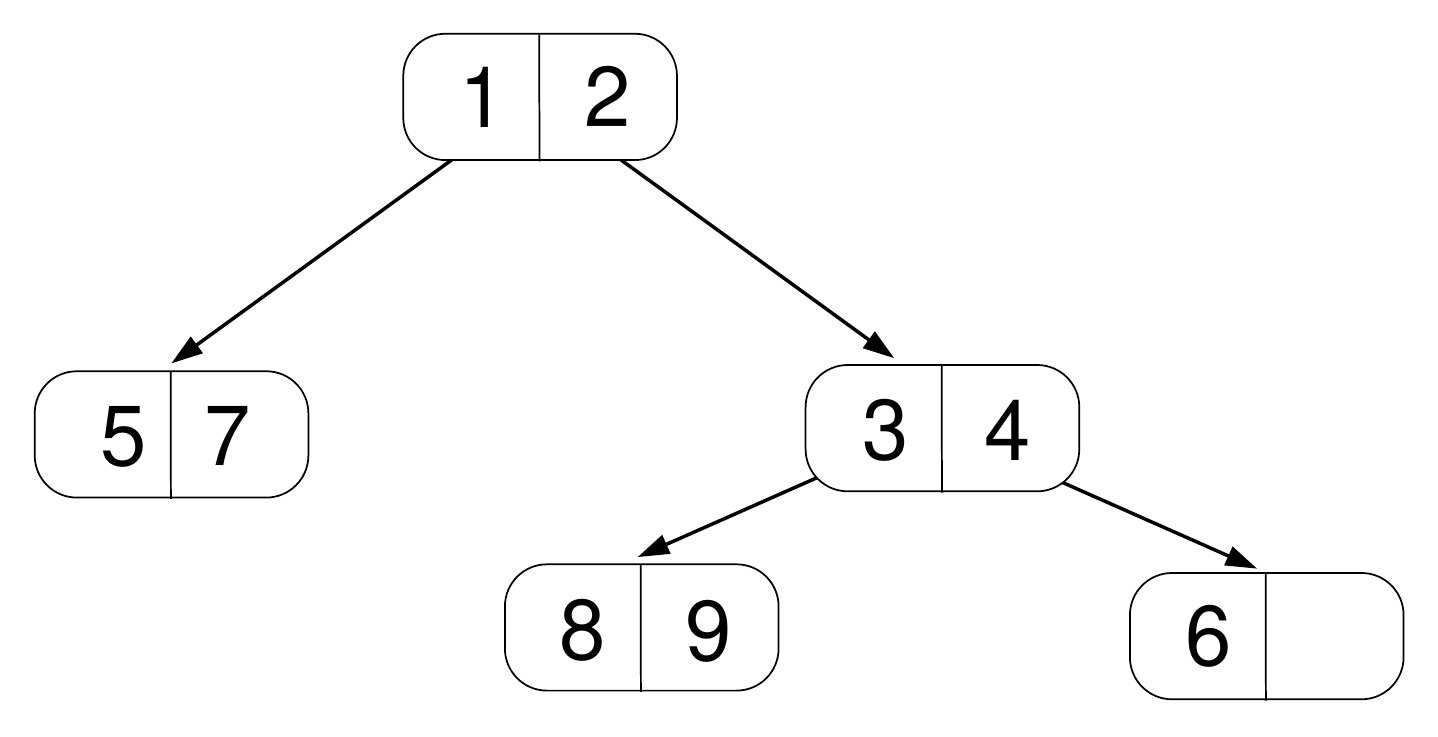}
\end{minipage}
\end{center}
\caption{The increasing diamond of size nine from Figure~\ref{DiamondFig1} with three different label types is mapped to an increasing-decreasing bilabelled tree and then to a bucket increasing tree.\label{Fig:BijectionM_Diamond1}}
\end{figure}

\begin{figure}[!htb]
\begin{center}
\begin{minipage}{14cm}
\includegraphics[width=4cm]{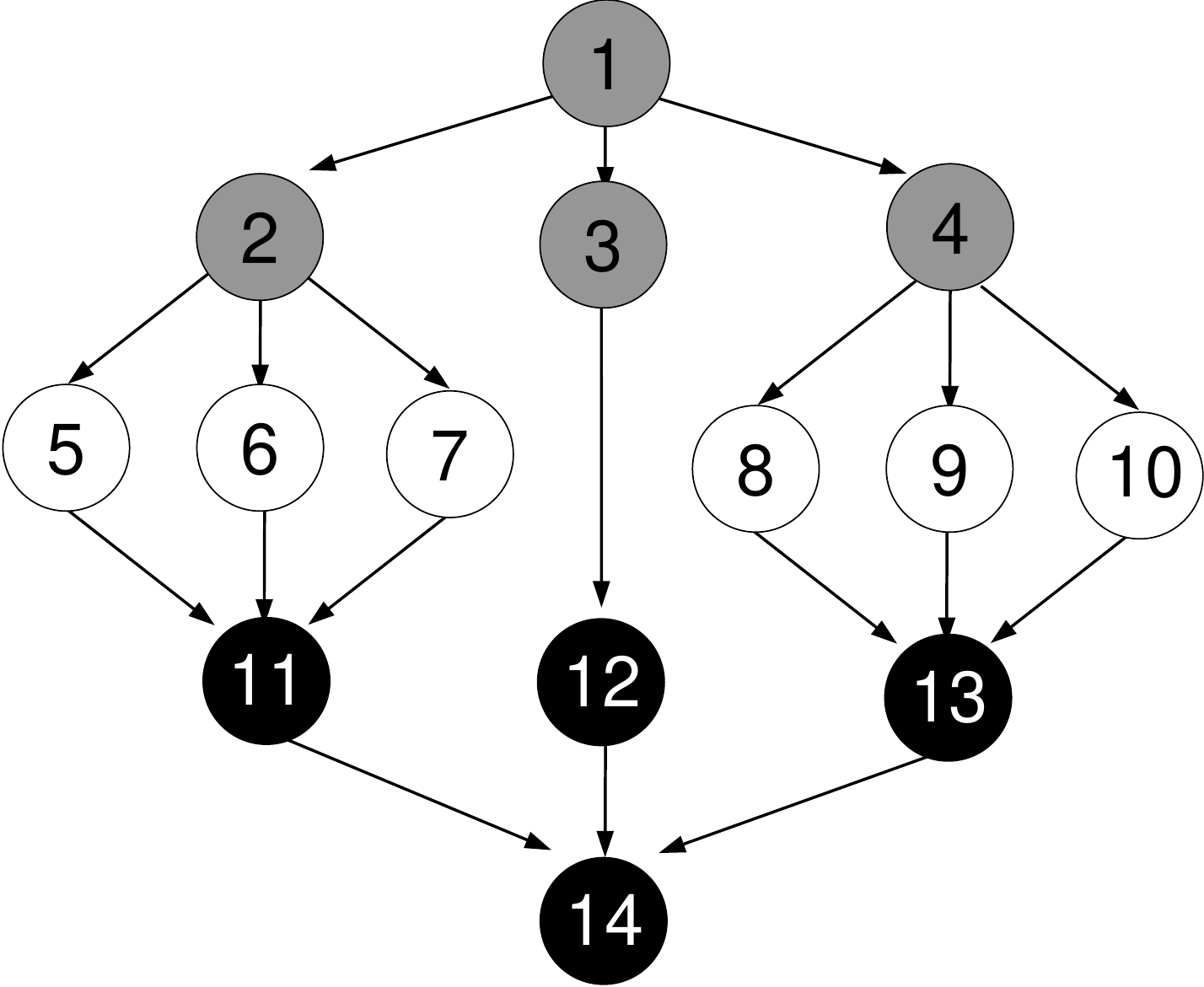}
\hspace{0.1cm}
\includegraphics[width=4.5cm]{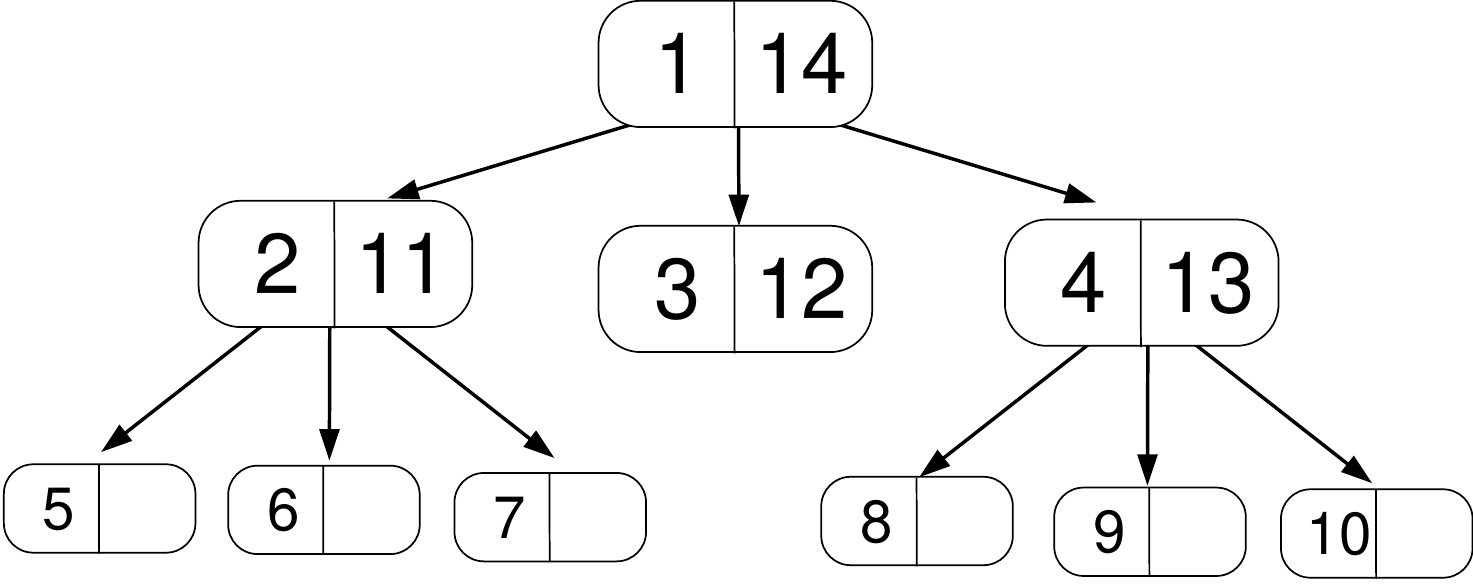}
\hspace{0.1cm}
\includegraphics[width=4.5cm]{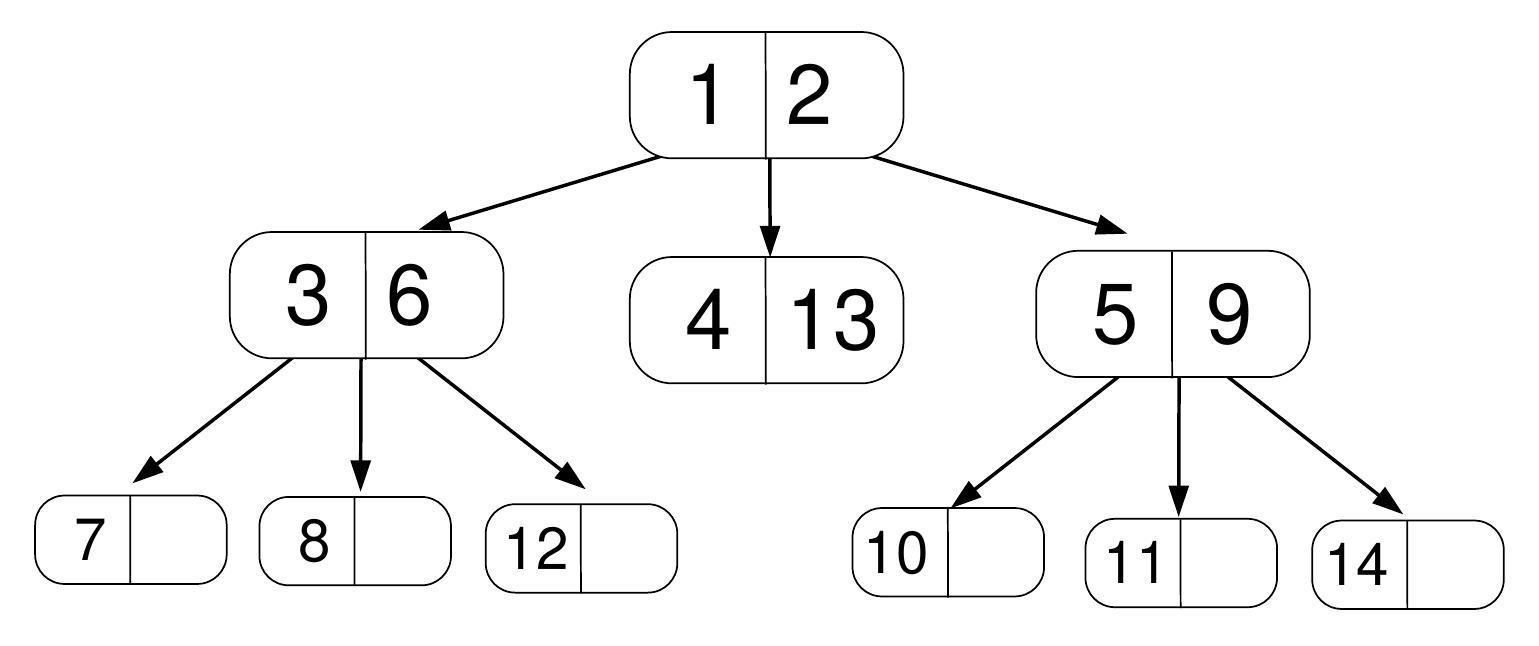}
\end{minipage}
\end{center}
\caption{The increasing diamond from Figure~\ref{DiamondFig1} of size $14$ with three different label types is mapped to an increasing-decreasing bilabelled tree and then to a bucket increasing tree.\label{Fig:BijectionM_Diamond2}}
\end{figure}


The map also gives a correspondence between quantities in increasing diamonds and bucket increasing trees.
Let $I_{n}$ denote the random variable counting the number of inner nodes in a random increasing diamond of size $n$. 
Moreover, let $N_n$ denote the number of nodes with capacity one in a random bucket increasing tree of size $n$. Then we get the following corollary.

\begin{coroll}
Under the bijection $\mathcal{M}$ given in Theorem~\ref{bijDiamondBucket} the number of inner nodes in an increasing diamond $F$ coincides with the number of nodes of capacity one in the corresponding bucket increasing tree $\mathcal{M}(F)$. Consequently, the r.v.\ $I_{n}$ and $N_{n}$ are equal in distribution, $I_{n} \stackrel{(d)}{=} N_{n}$.
\end{coroll}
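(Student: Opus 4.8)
The plan is to track the node-type assignment of increasing diamonds through the bijection $\mathcal{M}$ constructed in Theorem~\ref{bijDiamondBucket}, and to check that the two intermediate maps of Algorithms~\ref{Alg:DiamondToIncdesc} and~\ref{Alg:IncdescToBucket} both respect the correspondence ``inner node $\leftrightarrow$ bucket of capacity one''. First I would recall the recursive node-type decomposition $V(F) = V_S(F)\,\dot\cup\,V_I(F)\,\dot\cup\,V_L(F)$ described just before Theorem~\ref{bijDiamondBucket}: a size-one diamond contributes a single inner node, while for larger $F$ the smallest and largest labels become a small node and a large node respectively, and the decomposition is applied recursively to the residual sequence of subdiamonds $F_1,\dots,F_r$. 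The key structural observation is that an inner node of $F$ arises \emph{exactly} when a recursive call is made on a diamond of size one.

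Next I would trace this through Algorithm~\ref{Alg:DiamondToIncdesc}. When the current subdiamond has size $n=1$, the algorithm forms a bucket $\hat b = (v_I \mid\,)$ of capacity one; when $n\ge 2$ it forms a bucket $\hat b = (v_S \mid v_L)$ of capacity two. Hence, in the increasing-decreasing bilabelled tree $\hat T \in \mathcal{ID}$, the buckets of capacity one are in natural bijection with the inner nodes of $F$, and the buckets of capacity two correspond to the small/large node pairs. This is the core of the argument and essentially just an induction on the size following the recursion of the algorithm. Then I would observe that Algorithm~\ref{Alg:IncdescToBucket} only permutes labels (via the cyclic permutation $\pi$) and leaves the underlying shape — in particular every bucket's capacity — untouched; it recurses on the same subtree structure. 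Therefore the bucket increasing tree $\tilde T = \mathcal{M}(F)$ has exactly the same number of capacity-one buckets as $\hat T$, namely $|V_I(F)|$. Summing over all nodes, the number of inner nodes of $F$ equals the number of capacity-one nodes of $\mathcal{M}(F)$.

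For the distributional statement, I would invoke the fact, established in the proof of Theorem~\ref{bijDiamondBucket}, that $\mathcal{M}$ is weight-preserving: the degree-weight sequence $(\varphi_k)_{k\ge0}$ is carried over unchanged, and (with $\psi_1 = 1$ fixed for this family) the weight of $F$ equals the weight of $\mathcal{M}(F)$, so $\mathcal{M}$ induces a measure-preserving bijection between random increasing diamonds of size $n$ and random bucket increasing trees of size $n$. Combined with the pointwise identity $|V_I(F)| = \#\{v \in \mathcal{M}(F): c(v) = 1\}$ just proven, this gives $I_n \law N_n$. The main obstacle is not any deep difficulty but rather the bookkeeping of making the recursion-to-recursion correspondence precise: one must be careful that the residual subdiamonds $F_1,\dots,F_r$ of $F$ correspond, level by level, to the subtrees hanging off the root bucket, so that the inductive hypothesis can be applied cleanly to each $F_j$ — this is exactly the ``$r$ subdiamonds $\leftrightarrow$ $r$ subtrees'' remark in the proof of Theorem~\ref{bijDiamondBucket}, and it is what makes the induction go through.
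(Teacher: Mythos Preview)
Your proposal is correct and follows exactly the reasoning the paper intends: the corollary is stated without proof in the paper, being treated as an immediate consequence of the recursive construction of $\mathcal{M}$, and your argument makes explicit precisely this---that Algorithm~\ref{Alg:DiamondToIncdesc} creates a capacity-one bucket exactly when it hits a size-one subdiamond (an inner node), while Algorithm~\ref{Alg:IncdescToBucket} preserves shapes and capacities. There is nothing to add; your handling of the distributional part via the weight-preserving property is also what the paper has in mind.
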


We note that a refined enumeration of increasing diamonds according to the number of inner nodes is possible in a rather direct way. 
Let 
\[
f(z,u)=\sum_{n\ge 1}\sum_{j=1}^{n}\frac{T_{n,j}}{n!}z^n u^j
=\sum_{n\ge 1}\frac{T_n\cdot \E(u^{I_n})}{n!}z^n
\]
denote the refined generating function
with $f(z,1)=T(z)$.

Then from the symbolic equation~\eqref{DiamondEqn2} and the resulting differential equation for $f(z,u)$ one gets that $f=f(z,u)$ is characterized implicitly via
\[
\int_{0}^{f}\frac{dx}{\sqrt{u^2+2\cdot\Phi(x)}}=z.
\]
Many interesting concrete generating functions and examples can be obtained by specializing $\varphi(t)$; the authors are currently investigating into this matter.

%
%
%
%
%
%
%

\section{Initial bucket size of a specified element\label{BINCsecini}}
We consider now the random variable $K_{n}$, which denotes the size of the bucket containing element $n$ in a random bucket increasing tree (with maximal bucket size $b$) of size $n$. Note that by definition $1\le K_{n} \le b$. As it will turn out later, the precise analysis of $K_n$ is required in order 
to obtain distributional decompositions for two further r.v.\ of interest, the number of descendants $Y_{n,j}$ of label $j$ as well as the out-degree $X_{n,j}$ node $j$. 
We derive the probability mass function of $K_n$ using a generating function approach. Then, we relate the parameter $K_n$ to the distribution of node types in bucket increasing trees.

\subsection{The generating functions approach\label{BINCsecini1}}

In order to study $K_{n}$ for bucket increasing trees we introduce the bivariate generating function
\begin{equation}
   \label{eqnb0}
   N(z,v) := \sum_{n \ge 0} \sum_{m \ge 0} \mathbb{P}\{K_{n+1} = m\} T_{n+1}
   \frac{z^{n}}{n!} v^{m}.
\end{equation}
To establish a functional equation for $N(z,v)$ from the formal recursive equation~\eqref{eqna2} it is now
convenient to think of specifically bicoloured bucket increasing trees,
where the colouring is as follows: exactly one element, namely the element with largest label, is
coloured \emph{red}, and all elements having a label smaller than the red element
are coloured \emph{black}. Let us first assume that the red element of $T$ is not contained in the root node.
Then the red element is located in one of the $r$ subtrees of the
root of $T$; let us assume that it is in the $j$-th subtree. Let us
now consider these $r$ subtrees. After order preserving
relabellings, each subtree $S_{1}, \dots, S_{r}$ is an bucket increasing tree
by itself, where one of the $r$ subtrees contains the red element. Note the obvious fact that the size of the bucket of the red element is the same in $T$ and in the respective subtree $S_{j}$.

We introduce now generating functions, with exponential 
variable $z$, where $z$ marks the black elements, $f(z) = \sum_{n \ge 0} f_{n} \frac{z^n}{n! }$ for sequences $f_{n}$ and $f(z,v)=\sum_{n,m \ge 0} f_{n,m} \frac{z^n }{n!}v^m$ for sequences $f_{n,m}$,
where $v$ counts the initial bucket size of the red element.
With this setting, the total weight of all suitably bicoloured
bucket increasing trees, where the
initial bucket size of the red element is exactly $m$, is given by
$\P\{K_{n+1} = m\} T_{n+1}$, and thus its generating function
is given by
\begin{equation*}
   \sum_{n \ge 0} \sum_{m \ge 0} \mathbb{P}\{K_{n+1}=m\}
   T_{n+1} \frac{z^{n}}{n!} v^{m} = N(z,v),
\end{equation*}
whereas the total weight of suitably monocoloured ordinary bucket increasing trees
is $T_{n}$ and its generating
function is given by
\begin{equation*}
   \sum_{n \ge 1}  T_{n} \frac{z^{n}}{n!} = T(z).
\end{equation*}
The $r-1$ monocoloured trees and the bicoloured bucket tree lead then to the expression
$T(z)^{r-1} \cdot N(z,v)$. Since the red element can be in the first,
second, $\dots$, $r$-th subtree, we additionally get a factor $r$.
Furthermore, the event that the root has out-degree $r$ leads to a factor $\varphi_r$.
Summing over all $r \ge 1$ leads thus to $\sum_{r\ge 1} r \varphi_r T(z)^{r-1} N(z,v) =
\varphi'(T(z)) N(z,v).$
Since the elements labelled by $1, 2, \dots, b$ contained in the root node are all coloured black
(which again means that $b$ elements in a labelled object are fixed),
equation \eqref{eqna2} leads thus to the following differential equation of order $b$ for $N(z,v)$:
\begin{equation}
   \label{eqnb1}
   \frac{\partial^b}{\partial z^b}N(z,v) = \varphi'(T(z)) \cdot N(z,v).
\end{equation}

The cases, where the red element is contained in the root of the tree do not appear explicitly in the
differential equation itself, but will be described by the initial conditions.
Since $\mathbb{P}\{K_{n}=n\}=1$, for $1 \le n \le b$
(if element $n$ is contained in the root node then all elements with a label $\le n$ are also contained in the root node),
we obtain the following initial conditions, for $0 \le \ell \le b-1$:
\begin{align*}
   \left.\frac{\partial^{\ell}}{\partial z^{\ell}} N(z,v)\right|_{z=0} & = \sum_{m \ge 0}
   \mathbb{P}\{K_{\ell+1}=m\} T_{\ell+1} v^{m}=T_{\ell+1} v^{\ell+1},
\end{align*}
with $T_{n} = n! [z^{n}] T(z)$ and $T(z)$ as characterized in Proposition~\ref{BINCprop1} for the particular tree families.

Now we can specify the sequences according to the tree family of interest. Note that for bucket recursive trees 
the initial bucket size of node $n$ was already (implicitly) characterized in~\cite{BucketPanKu2009}, hence we will skip this case. For \Dit{} we obtain the following Cauchy-Euler type differential equation together with the initial conditions for the
bivariate generating function $N(z,v)$:
\begin{equation}
 \label{eqnb3a}
   \frac{\partial^{b}}{\partial z^{b}} N(z,v) = \frac{T_{b+1}}{(1-(d-1)z)^{b}}N(z,v),
   \quad \left.\frac{\partial^{\ell}}{\partial z^{\ell}} N(z,v)\right|_{z=0} = T_{\ell+1}v^{\ell+1},
   \quad \text{for} \enspace 0 \le \ell \le b-1.
\end{equation}
For \Port{} we obtain a very similar 
Cauchy-Euler type differential equation together with the initial conditions for the
bivariate generating function $N(z,v)$:
\begin{equation}
\label{eqnb3b}
   \frac{\partial^{b}}{\partial z^{b}} N(z,v) = \frac{T_{b+1}}{(1-(\alpha+1)z)^{b}}N(z,v),
   \quad \left.\frac{\partial^{\ell}}{\partial z^{\ell}} N(z,v)\right|_{z=0} = T_{\ell+1}v^{\ell+1},
   \quad \text{for} \enspace 0 \le \ell \le b-1.
\end{equation}

\subsection{The distribution of the initial bucket size}\label{ssec52}
In order to obtain the exact distribution of the r.v.\ $K_{n}$ we will give the exact solution of the
homogeneous differential equations \eqref{eqnb3a}, \eqref{eqnb3b}, which are of Cauchy-Euler-type. Plugging in the Ansatz $N(z,v) = \frac{1}{(1-(d-1)z)^{\lambda}}$ for 
\Dit, and $N(z,v) = \frac{1}{(1-(\alpha+1)z)^{\lambda}}$ for \Port,
with unspecified $\lambda$, into equations \eqref{eqnb3a}, \eqref{eqnb3b}
leads to the indicial equation
\begin{equation}
   \label{eqnb4}
   \lambda^{\overline{b}} =
   \begin{cases} 
   \displaystyle{\frac{T_{b+1}}{(d-1)^{b}}},\\[0.2cm]
   \displaystyle{\frac{T_{b+1}}{(\alpha+1)^{b}}},
   \end{cases}
   \quad \text{or equivalently} \quad
   \binom{\lambda+b-1}{b} =
   \begin{cases} 
   \displaystyle{\binom{b+\frac{1}{d-1}}{b}},\quad\dit,\\[0.2cm]
   \displaystyle{\binom{b-\frac{1}{\alpha+1}}{b}},\quad\port.
   \end{cases}
\end{equation}
Similar equations have been studied in \cite{MahSmy1995} for bucket recursive trees. 
It is convenient to give a unified analysis of the indicial equations 
for all three models.
Let 
\begin{equation}
\label{cc}
\quad \kappa=
\begin{cases}
      \displaystyle{0}, & \quad  \text{bucket recursive trees}, \\[3mm]
      \displaystyle{\frac{1}{d-1}}, & \quad \dit, \\[3mm]
      \displaystyle{-\frac{1}{\alpha+1}}, & \quad \port.
   \end{cases}
\end{equation}
Then, the indicial equations can be written in a unified way:
\begin{equation}
\label{indicial}
 \lambda^{\overline{b}} = \fallfak{(b+\kappa)}{b}.
\end{equation}

	%
	%

Equations of this or similar kind have been treated in~\cite{MahSmy1995,BucketPanKu2009,KubPan2019}. It follows from these considerations that, for $\kappa$ given in \eqref{cc}, all solutions $\lambda_{1}, \lambda_{2}, \dots, \lambda_{b}$ of \eqref{indicial} are simple, and when arranging them in descending order of real parts it further holds $1+\frac{1}{d-1}=\lambda_{1} > \Re(\lambda_{2}) \ge \Re(\lambda_{3}) \ge \cdots
\ge \Re(\lambda_{b})$ for \Dit{} and $1-\frac{1}{\alpha+1}=\lambda_{1} > \Re(\lambda_{2}) \ge \Re(\lambda_{3}) \ge \cdots
\ge \Re(\lambda_{b})$ for \Port.
Thus the general solutions of \eqref{eqnb3a}, \eqref{eqnb3b} are given by
\begin{equation}
   \label{eqnb5}
   N(z,v) = 
   \begin{cases}
   \displaystyle{\sum_{i=1}^{b} \frac{\beta_{i}(v)}{(1-(d-1)z)^{\lambda_{i}}}},\quad\dit,\\[0.2cm]
   \displaystyle{\sum_{i=1}^{b} \frac{\beta_{i}(v)}{(1-(\alpha+1)z)^{\lambda_{i}}}},\quad\port,
   \end{cases}
\end{equation}
with certain functions $\beta_{i}(u,v)$, which are specified by the initial conditions as given in \eqref{eqnb3a}, \eqref{eqnb3b}.
When these initial conditions are plugged into \eqref{eqnb5} this leads to the following system of linear
equations for the unknown functions $\beta_{i}(v)$, $1 \le i \le b$:
\begin{equation*}
   \sum_{i=1}^{b} \lambda_{i}^{\overline{\ell}} \beta_{i}(v)=
   \begin{cases}
   \displaystyle{v^{\ell+1}l!\binom{\ell+\frac{1}{d-1}}{\ell}},\quad\dit,\\[0.2cm]
   \displaystyle{v^{\ell+1}l!\binom{\ell-\frac{1}{\alpha+1}}{\ell}},\quad\port.
   \end{cases}
\end{equation*}
Using the abbreviations
\begin{equation}
   \label{eqnb20}
   s_{\ell} := s_{\ell}(v) := 
   \begin{cases}
   \displaystyle{v^{\ell+1}\binom{\ell+\frac{1}{d-1}}{\ell}},\quad\dit,\\[0.2cm]
   \displaystyle{v^{\ell+1}\binom{\ell-\frac{1}{\alpha+1}}{\ell}},\quad\port,
   \end{cases}
\end{equation}
we obtain the following system of linear equations for the unknown $\beta_{i}=\beta_i(v)$, $1 \le i \le b$:
\begin{equation}
   \label{eqnb6}
   \sum_{i=1}^{b} \binom{\lambda_{i}+\ell-1}{\ell} \beta_{i} = s_{\ell},
   \quad \text{for} \enspace 0 \le \ell \le b-1.
\end{equation}
To obtain the solution of \eqref{eqnb6} we can use results and respective computations given in \cite[equations $(22)$ and $(30)$]{BucketPanKu2009}, slightly adapted to $\kappa$ occurring in the indicial equation \eqref{indicial} of the tree families considered.
\begin{lemma}[\cite{BucketPanKu2009}]\label{lem1}
For given numbers $\lambda_i$, with $1\le i\le b$, specified as the solutions of equation~\eqref{indicial},
and numbers $s_{\ell}$, $0\le \ell\le b-1$, 
the system of linear equations with unknowns $\beta_i$, 
\begin{equation*}
\sum_{i=1}^{b} \binom{\lambda_{i}+\ell-1}{\ell-1}\beta_{i}=s_{\ell},\quad 0\le \ell\le b-1,
\end{equation*}
has the exact solution 
\begin{equation*}
\beta_i=
\begin{cases}
\displaystyle{\sum_{r=0}^{b-1}s_r \frac{\binom{\lambda_i+b-1}{b-r-1}}{\binom{b}{r}(b-r)\binom{b+\frac{1}{d-1}}{b}(H_{\lambda_i+b-1}-H_{\lambda_i-1})}},\quad 1\le i\le b,\quad\dit,\\[0.3cm]
\displaystyle{\sum_{r=0}^{b-1}s_r \frac{\binom{\lambda_i+b-1}{b-r-1}}{\binom{b}{r}(b-r)\binom{b-\frac{1}{\alpha+1}}{b}(H_{\lambda_i+b-1}-H_{\lambda_i-1})}},\quad 1\le i\le b,\quad\port.
\end{cases}
\end{equation*}
\end{lemma}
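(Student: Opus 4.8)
The plan is a direct verification. Since $\lambda_{1},\dots,\lambda_{b}$ are pairwise distinct (by the considerations recalled just before the lemma), the coefficient matrix $\big(\binom{\lambda_{i}+\ell-1}{\ell}\big)_{0\le\ell\le b-1,\ 1\le i\le b}$ of the system~\eqref{eqnb6} is a generalized Vandermonde matrix, hence invertible, and it suffices to check that the stated $\beta_{i}$ satisfy all $b$ equations. The first step would be to simplify the denominators. Put $Q(x):=\binom{x+b-1}{b}=\frac1{b!}\prod_{k=0}^{b-1}(x+k)$. By~\eqref{indicial}, $Q(\lambda_{i})=\binom{b+\kappa}{b}=:C$ with $\kappa$ as in~\eqref{cc}, so $\lambda_{1},\dots,\lambda_{b}$ are precisely the simple roots of $P(x):=Q(x)-C=\frac1{b!}\prod_{i=1}^{b}(x-\lambda_{i})$. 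Differentiating $\log Q$ yields $Q'(x)=Q(x)\sum_{k=0}^{b-1}\frac1{x+k}=Q(x)\,(H_{x+b-1}-H_{x-1})$, and since $P'=Q'$ and $Q(\lambda_{i})=C$ we get $P'(\lambda_{i})=C\,(H_{\lambda_{i}+b-1}-H_{\lambda_{i}-1})$. Hence the asserted solution reads equivalently $\beta_{i}=\sum_{r=0}^{b-1}\frac{s_{r}}{\binom{b}{r}(b-r)}\cdot\frac{\binom{\lambda_{i}+b-1}{b-r-1}}{P'(\lambda_{i})}$.

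The combinatorial core is the polynomial identity in $x$, with $y$ a parameter,
\[
\frac{Q(x)-Q(y)}{x-y}=\sum_{r=0}^{b-1}\frac{\binom{y+b-1}{b-r-1}}{\binom{b}{r}(b-r)}\binom{x+r-1}{r}.
\]
I would prove it by telescoping: for $a_{k}:=x^{\overline{k}}\,(y+k)^{\overline{b-k}}$, $0\le k\le b$, one has $a_{0}=y^{\overline{b}}$, $a_{b}=x^{\overline{b}}$ and $a_{k+1}-a_{k}=(x-y)\,x^{\overline{k}}\,(y+k+1)^{\overline{b-k-1}}$, so that $x^{\overline{b}}-y^{\overline{b}}=(x-y)\sum_{k=0}^{b-1}x^{\overline{k}}(y+k+1)^{\overline{b-k-1}}$; dividing both sides by $(x-y)\,b!$ and inserting $x^{\overline{k}}=k!\binom{x+k-1}{k}$, $(y+k+1)^{\overline{b-k-1}}=(b-k-1)!\binom{y+b-1}{b-k-1}$ and $\frac{k!\,(b-k-1)!}{b!}=\frac1{\binom{b}{k}(b-k)}$ gives the identity.

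To conclude, I would specialise this identity at $y=\lambda_{i}$, where $Q(\lambda_{i})=C$ and $Q(x)-Q(\lambda_{i})=P(x)$; dividing by $P'(\lambda_{i})$ this displays the $i$-th Lagrange fundamental polynomial $L_{i}(x):=\frac{P(x)}{(x-\lambda_{i})P'(\lambda_{i})}$, which satisfies $L_{i}(\lambda_{j})=\delta_{ij}$, as $L_{i}(x)=\sum_{r=0}^{b-1}a_{i,r}\binom{x+r-1}{r}$ with $a_{i,r}:=\frac{\binom{\lambda_{i}+b-1}{b-r-1}}{\binom{b}{r}(b-r)P'(\lambda_{i})}$; in particular the claimed solution is $\beta_{i}=\sum_{r=0}^{b-1}a_{i,r}s_{r}$. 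Substituting this into the $\ell$-th equation of~\eqref{eqnb6}, interchanging the two summations and writing $p_{\ell}(x):=\binom{x+\ell-1}{\ell}$, one gets $\sum_{i=1}^{b}p_{\ell}(\lambda_{i})\beta_{i}=\sum_{r=0}^{b-1}s_{r}\sum_{i=1}^{b}p_{\ell}(\lambda_{i})a_{i,r}$. Since $\deg p_{\ell}=\ell\le b-1$, Lagrange interpolation at the $b$ distinct nodes $\lambda_{i}$ gives $\sum_{i=1}^{b}p_{\ell}(\lambda_{i})L_{i}(x)=p_{\ell}(x)$; comparing coefficients in the basis $\{\binom{x+r-1}{r}\}_{0\le r\le b-1}$ of the space of polynomials of degree $\le b-1$ (which, being triangular with respect to the monomial basis, is indeed a basis) shows $\sum_{i=1}^{b}p_{\ell}(\lambda_{i})a_{i,r}=\delta_{\ell r}$, so the double sum collapses to $s_{\ell}$, as required. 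The two displayed instances correspond to $C=\binom{b+\frac1{d-1}}{b}$ in the $(b,d)$-ary case and $C=\binom{b-\frac1{\alpha+1}}{b}$ in the $(b,\alpha)$-PORT case.

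The main obstacle I anticipate is the polynomial identity displayed above: the telescoping bookkeeping together with the three rising-factorial-to-binomial rewritings must be done with the indices aligned exactly, in particular the normalisation $\frac{k!(b-k-1)!}{b!}=\frac1{\binom{b}{k}(b-k)}$, which is precisely what reproduces the denominators $\binom{b}{r}(b-r)$ of the statement. Once that identity and the value $P'(\lambda_{i})=C\,(H_{\lambda_{i}+b-1}-H_{\lambda_{i}-1})$ are established, everything else is routine linear algebra over the simple roots of $P$.
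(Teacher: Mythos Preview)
Your argument is correct and self-contained. The paper itself does not prove Lemma~\ref{lem1}; it simply imports the result from~\cite[equations~(22) and~(30)]{BucketPanKu2009} and remarks that it carries over mutatis mutandis to the value $\kappa$ in~\eqref{cc}. What you supply is therefore more than what the paper does: a full verification via Lagrange interpolation, built on the telescoping identity for $x^{\overline{b}}-y^{\overline{b}}$ and the observation that $P'(\lambda_i)=C\,(H_{\lambda_i+b-1}-H_{\lambda_i-1})$ with $C=\binom{b+\kappa}{b}$. Each step checks out: the telescoping $a_{k+1}-a_{k}=(x-y)\,x^{\overline{k}}(y+k+1)^{\overline{b-k-1}}$ is correct, the normalisation $\frac{k!(b-k-1)!}{b!}=\frac{1}{\binom{b}{k}(b-k)}$ is exactly what produces the denominators in the statement, and the final reduction to $\sum_i p_\ell(\lambda_i)a_{i,r}=\delta_{\ell r}$ via Lagrange interpolation in the binomial basis $\{\binom{x+r-1}{r}\}_{0\le r\le b-1}$ is clean. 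You also correctly work with the system~\eqref{eqnb6}, i.e., with entries $\binom{\lambda_i+\ell-1}{\ell}$, which is the intended form (the displayed statement of the lemma has a harmless typo in the lower index).

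The upshot is that your route is the natural one and almost certainly the same as in the cited source, but in any case it is complete here whereas the paper defers the computation to~\cite{BucketPanKu2009}.
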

An application of Lemma~\ref{lem1} immediately gives the values $\beta_i(v)$ with respect to the initial conditions \eqref{eqnb20}. Thus extracting coefficients yields
\begin{equation*}
[z^{n-1}v^m]N(z,v)=\P\{K_{n}=m\}\frac{T_{n}}{(n-1)!}=
\begin{cases}
   \displaystyle{\sum_{i=1}^{b}\binom{\lambda_i+n-2}{n-1}(d-1)^{n-1}[v^m]\beta_i(v)},\quad\dit,\\[0.3cm]
   \displaystyle{\sum_{i=1}^{b}\binom{\lambda_i+n-2}{n-1}(\alpha+1)^{n-1}[v^m]\beta_i(v)},\quad\port,
   \end{cases}
\end{equation*}
and by specifying $T_{n}$ as given in Theorem~\ref{BINCprop1} and $s_{\ell}(v)$ as given in \eqref{eqnb20} we obtain the exact distribution of $K_{n}$ stated in the following theorem.


\begin{theorem}
The probability mass function of the random variable $K_n$ counting the initial bucket size of node $n$ in a random bucket tree of size $n$ is given by the following closed formula.
\begin{equation*}
\P\{K_n=m\}=
\begin{cases}
\displaystyle{\sum_{i=1}^{b}\frac{\binom{\lambda_i+n-2}{n-1}\binom{\lambda_i+b-1}{b-m}\binom{m-1+\frac{1}{d-1}}{m-1}}{\binom{n-1+\frac{1}{d-1}}{n-1}\binom{b}{m-1}(b-m+1)\binom{b+\frac{1}{d-1}}{b}(H_{\lambda_i+b-1}-H_{\lambda_i-1})}},\quad\text{for }\dit,\\[0.3cm]
\displaystyle{\sum_{i=1}^{b}\frac{\binom{\lambda_i+n-2}{n-1}\binom{\lambda_i+b-1}{b-m}\binom{m-1-\frac{1}{\alpha+1}}{m-1}}{\binom{n-1-\frac{1}{\alpha+1}}{n-1}\binom{b}{m-1}(b-m+1)\binom{b-\frac{1}{\alpha+1}}{b}(H_{\lambda_i+b-1}-H_{\lambda_i-1})}},\quad\text{for }\port,
\end{cases}
\end{equation*}
for $1\le m\le b$. 

\smallskip

For $n$ tending to infinity the random variable $K_n$ converges in distribution to a limit $K$, whose discrete distribution is given as follows.
\begin{equation*}
\P\{K=m\}=
\begin{cases}
\displaystyle{\frac{\binom{b+\frac{1}{d-1}}{b-m}\binom{m-1+\frac{1}{d-1}}{m-1}}{\binom{b}{m-1}(b-m+1)\binom{b+\frac{1}{d-1}}{b}(H_{b+\frac{1}{d-1}}-H_{\frac{1}{d-1}})}},\quad\dit,\\[0.3cm]
\displaystyle{\frac{\binom{b-\frac{1}{\alpha-1}}{b-m}\binom{m-1-\frac{1}{\alpha+1}}{m-1}}{\binom{b}{m-1}(b-m+1)\binom{b-\frac{1}{\alpha+1}}{b}(H_{b-\frac{1}{\alpha+1}}-H_{-\frac{1}{\alpha+1}})}},\quad\port,
\end{cases}
\end{equation*}
for $1\le m \le b$.
\end{theorem}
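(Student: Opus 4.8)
The plan is to build on the generating function machinery already established in the excerpt. The bivariate generating function $N(z,v)$ satisfies the Cauchy-Euler differential equations \eqref{eqnb3a} and \eqref{eqnb3b}, whose general solutions \eqref{eqnb5} are superpositions of simple powers $(1-(d-1)z)^{-\lambda_i}$ (resp.\ $(1-(\alpha+1)z)^{-\lambda_i}$), with the $\lambda_i$ the $b$ simple roots of the indicial equation \eqref{indicial} and coefficient functions $\beta_i(v)$ fixed by the initial conditions. The first step is simply to assemble the pieces: one substitutes the initial data $s_\ell(v)$ from \eqref{eqnb20} into Lemma~\ref{lem1} to read off $\beta_i(v)$ explicitly, noting that $[v^m]\beta_i(v)$ picks out exactly the $r=m-1$ term in the sum over $r$ since $s_r(v)$ is a monomial $v^{r+1}$ times a binomial coefficient.

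Second, I would extract coefficients. From \eqref{eqnb5} one has $[z^{n-1}]N(z,v)=\sum_{i=1}^b \beta_i(v)\binom{\lambda_i+n-2}{n-1}(d-1)^{n-1}$ (resp.\ with $\alpha+1$), using the standard expansion $(1-ct)^{-\lambda}=\sum_{k\ge0}\binom{\lambda+k-1}{k}c^k t^k$. Combining this with the definition \eqref{eqnb0} of $N(z,v)$, namely $[z^{n-1}v^m]N(z,v)=\P\{K_n=m\}\,T_n/(n-1)!$, and dividing through by $T_n=(n-1)!(d-1)^{n-1}\binom{n-1+\frac1{d-1}}{n-1}$ from Theorem~\ref{BINCprop1}, produces the claimed closed formula for $\P\{K_n=m\}$ after substituting $s_{m-1}(v)$'s binomial coefficient into the $\beta_i$ formula of Lemma~\ref{lem1}. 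The \port\ case is identical with $\kappa=-\frac1{\alpha+1}$ in place of $\kappa=\frac1{d-1}$ throughout. This is mostly bookkeeping; care is needed only to track the factors $(d-1)^{n-1}$ and the normalization by $T_n$ so that the powers of $(d-1)$ cancel cleanly.

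Third, for the limit law I would let $n\to\infty$. Since $\lambda_1=1+\frac1{d-1}$ (resp.\ $1-\frac1{\alpha+1}$) is the unique root of maximal real part and all roots are simple, the dominant singularity analysis gives $\binom{\lambda_i+n-2}{n-1}\big/\binom{n-1+\frac1{d-1}}{n-1}\to 0$ for $i\ge2$ and $\to 1$ for $i=1$ (because $\lambda_1-1=\frac1{d-1}$ matches the exponent in the denominator's binomial, so the ratio of the two $\Gamma$-asymptotics is exactly $1$; for $i\ge 2$ the exponent is strictly smaller in real part and the ratio decays like a negative power of $n$). Hence only the $i=1$ term survives, and substituting $\lambda_1=1+\frac1{d-1}$ into the formula — so that $\binom{\lambda_1+b-1}{b-m}=\binom{b+\frac1{d-1}}{b-m}$, $H_{\lambda_1+b-1}-H_{\lambda_1-1}=H_{b+\frac1{d-1}}-H_{\frac1{d-1}}$ — gives exactly the stated $\P\{K=m\}$. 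Convergence in distribution then follows since this is a finitely-supported ($1\le m\le b$) sequence of probability vectors converging entrywise to a probability vector; one should also check the limiting vector sums to $1$, which is equivalent to a binomial identity that can be verified from the known $n$-dependent masses summing to $1$.

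The main obstacle I anticipate is not any single deep step but rather the careful verification that the simple roots $\lambda_i$ of \eqref{indicial} genuinely satisfy $\lambda_1>\Re(\lambda_2)\ge\cdots\ge\Re(\lambda_b)$ with $\lambda_1$ as stated and, correspondingly, that the error terms from the subdominant roots vanish in the limit — this is where one must invoke the analysis of such indicial equations from \cite{MahSmy1995,BucketPanKu2009,KubPan2019}, and one must make sure the cited root-location results apply for the relevant ranges $d\in\N\setminus\{1\}$ and $\alpha>0$, i.e.\ for $\kappa=\frac1{d-1}\in(0,1]$ and $\kappa=-\frac1{\alpha+1}\in(-1,0)$. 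The remaining algebra — plugging Lemma~\ref{lem1} into the coefficient extraction and simplifying the binomial coefficients $\binom{b}{r}(b-r)=\binom{b}{m-1}(b-m+1)$ at $r=m-1$ — is routine.
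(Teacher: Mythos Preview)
Your proposal is correct and follows essentially the same route as the paper: the exact formula is obtained by plugging the initial data \eqref{eqnb20} into Lemma~\ref{lem1}, extracting the $v^{m}$-coefficient (which indeed singles out $r=m-1$), reading off $[z^{n-1}]$ from the general solution \eqref{eqnb5} via the standard binomial expansion, and dividing by $T_n/(n-1)!$ from Theorem~\ref{BINCprop1}; the limit law is then obtained exactly as you describe, by using Stirling/Gamma-asymptotics for $\binom{\lambda_i+n-2}{n-1}$ together with the root-location facts $\lambda_1=1+\kappa>\Re(\lambda_i)$ for $i\ge 2$ quoted from \cite{MahSmy1995,BucketPanKu2009,KubPan2019}, so that only the $i=1$ summand survives.
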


\begin{remark}
The corresponding results for bucket recursive trees were already derived in~\cite{BucketPanKu2009} (although not stated explicitly):
\begin{equation*}
   \mathbb{P}\{K_{n}=m\} = \sum_{i=1}^{b} \frac{\binom{\lambda_{i}+b-1}{b-m} \binom{\lambda_{i}+n-2}{n-1}}
   {\binom{b}{m-1} (b-m+1) (H_{\lambda_{i}+b-1} - H_{\lambda_{i}-1})}, 
\end{equation*}
for $1\le m\le b$.  
Furthermore, for bucket recursive trees the random variable $K_n$ converges, for $n \to \infty$, in distribution to a Zipf-distributed limit $K$:
\[
\P\{K=m\}=\frac{1}{m H_b},\quad 1\le m \le b.
\] 
\end{remark}

\begin{proof} 
It remains to show the stated limiting distribution results.
We apply Stirling's formula for the Gamma-function 
\[
    \Gamma(z) = \Bigl(\frac{z}{e}\Bigr)^{z}\frac{\sqrt{2\pi }}{\sqrt{z}}%
    \Bigl(1+\mathcal{O}\big(\frac{1}{z}\big)\Bigr),
\]
and get
\[
 \binom{n-1+\frac{1}{d-1}}{n-1}= 
\frac{\Gamma(n+\frac{1}{d-1})}{\Gamma(1+\frac{1}{d-1})\Gamma(n)}=
\frac{n^{\frac{1}{d-1}}}{\Gamma(1+\frac{1}{d-1})}\big(1+\mathcal{O}(n^{-1})\big),
\]
as well as
\[
 \binom{n-1-\frac{1}{\alpha+1}}{n-1}= 
\frac{\Gamma(n-\frac{1}{\alpha+1})}{\Gamma(1-\frac{1}{\alpha+1})\Gamma(n)}
=\frac{n^{-\frac{1}{\alpha+1}}}{\Gamma(1-\frac{1}{\alpha+1})}\big(1+\mathcal{O}(n^{-1})\big).
\]
Concerning the roots of the indicial equations recall from remarks stated past equation \eqref{indicial} that 
\[
\lambda_1=
\begin{cases}
1+\frac{1}{d-1},\quad\dit,\\
1-\frac{1}{\alpha+1},\quad\port.
\end{cases}
\]
For \Dit{} we obtain the following asymptotic expansions:
\begin{align*}
   \binom{n-2+\lambda_{i}}{n-1} & = \frac{n^{\lambda_{i}-1}}{\Gamma(\lambda_{i})} \big(1+\mathcal{O}(n^{-1})\big) =
   \begin{cases}
      \frac{n^{\frac{1}{d-1}}}{\Gamma(1+\frac{1}{d-1})}\big(1+\mathcal{O}(n^{-1})\big), & \quad i=1, \\
      \mathcal{O}(n^{\Re \lambda_{i}-1}), & \quad 2 \le i \le b.
   \end{cases} 
\end{align*}
For \Port{} we have the corresponding results
\begin{align*}
   \binom{n-2+\lambda_{i}}{n-1}  =
   \begin{cases}
      \frac{n^{-\frac{1}{\alpha+1}}}{\Gamma(1-\frac{1}{\alpha+1})}\big(1+\mathcal{O}(n^{-1})\big), & \quad i=1, \\
      \mathcal{O}(n^{\Re \lambda_{i}-1}), & \quad 2 \le i \le b.
   \end{cases} 
\end{align*}
Thus, for $n\to\infty$, the dominant contribution in the asymptotic expansions of $\P\{K_n=m\}$ and the finite sum stems from the index $i=1$ and $\lambda_1$, leading to the stated result.
\end{proof}

\subsection{Relation to node types in bucket increasing trees}
Let $N_{n,j}$ denote the random variable counting the number of nodes with capacity $c(v)=j$, $1\le j \le b$. 
Furthermore, let $\bN_n$ denote the random vector $(N_{n,1},\dots,N_{n,b})$. Mahmoud and Smythe~\cite{MahSmy1995} considered bucket recursive trees. 
They proved a multivariate central limit theorem for $\bN_n$ for trees with bucket size $b \le 26$. For trees with $b > 26$ a phase change in the limiting distribution of $\bN_n$ was detected and the central limit theorem does not hold anymore. In the following we are going to analyze the limiting distribution of the random vector $\bN_n$ for \Dit\ and \Port. We also summarize the main results for bucket recursive trees.
There is a close connection between the random variables $K_n$ and $\bN_n$. The distribution of the initial bucket size $K_{n+1}$ depends on the different node types present at time $n$, $n\ge 1$.
Let $v_{k}=v_{n,m,k}$ denote the buckets of capacity $m$ contributing to $N_{n,m}$, with $1\le m\le b$ and $1\le k\le N_{n,j}$.
Then, by definition of the growth processes
\[
\P\{K_{n+1}=1 \mid \bN_n\}=\sum_{k=1}^{N_{n,b}}\P\{n+1<_t v_{n,b,k}\}
\]
and, for $2\le m\le b$,
\[
\P\{K_{n+1}=m \mid \bN_n\}=\sum_{k=1}^{N_{n,m-1}}\P\{n+1<_t v_{n,m-1,k}\}.
\]
Of course, for each $k$, the probabilities $\P\{n+1<_t v_{n,b,k}\}$ coincide and are given according to Definitions~\ref{def0}-\ref{def2}:
\[
\P\{n+1<_t v_{n,m,k}\}
=
\begin{cases}
\frac{m}{n}, & \Rec,\\
\frac{(d-1)m+1-\grad^{+}(v)}{(d-1)n+1}, & \dit,\\
\frac{\grad^{+}(v)+(\alpha+1)m-1}{(\alpha+1)n-1}, & \port.\\
\end{cases}
\]
Note that for $1\le m\le b-1$ the individual nodes $v_{k}=v_{n,m,k}$ are unsaturated, such that $\grad^+(v_{n,m,k})=0$.
Consequently, summation leads to the following result.

\begin{prop}
The random variable $K_n$ counting the initial size of the bucket containing label $n$ is related to 
the number of nodes $\bN_n=(N_{n,1},\dots,N_{n,b})$ with respective capacities as follows.
For $2\le m\le b$ it holds
\[
\P\{K_{n+1}=m\}=\E(N_{n,m-1})
\cdot 
\begin{cases}
\frac{m-1}{n}, & \Rec,\\
\frac{(d-1)(m-1)+1}{(d-1)n+1}, & \dit,\\
\frac{(\alpha+1)(m-1)-1}{(\alpha+1)n-1}, & \port.
\end{cases}
\]
and 
\[
\P\{K_{n+1}=1\}=
\begin{cases}
\E(N_{n,b})\cdot \frac{b}{n}, & \Rec,\\
\E(N_{n,b})\cdot \frac{(d-1)b+1}{(d-1)n+1}+\frac{1-\sum_{j=1}^b\E(N_{n,j})}{(d-1)b+1}, & \dit,\\
\E(N_{n,b})\cdot \frac{(\alpha+1)b-1}{(\alpha+1)n-1}+\frac{-1+\sum_{j=1}^b\E(N_{n,j})}{(\alpha+1)n-1}, & \port.
\end{cases}
\]
\end{prop}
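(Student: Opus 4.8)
The plan is to pass from the conditional identities for $\P\{K_{n+1}=m\mid\bN_n\}$ displayed immediately before the statement to unconditional probabilities by taking expectations, and then to evaluate the resulting sums over same-capacity buckets. For $2\le m\le b$ this is immediate: each bucket $v_{n,m-1,k}$ counted by $N_{n,m-1}$ has capacity $m-1\le b-1$, hence is unsaturated, hence a leaf with $\grad^+(v_{n,m-1,k})=0$; so all $N_{n,m-1}$ summands in $\P\{K_{n+1}=m\mid\bN_n\}=\sum_{k=1}^{N_{n,m-1}}\P\{n+1<_t v_{n,m-1,k}\}$ are equal to the value obtained by substituting capacity $m-1$ and out-degree $0$ into the attraction probability of the respective growth rule in Definitions~\ref{def0}--\ref{def2}. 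Multiplying by $N_{n,m-1}$ and taking expectations, using only linearity, yields the stated expression for $\P\{K_{n+1}=m\}$ in all three families.

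For $m=1$ the buckets $v_{n,b,k}$ counted by $N_{n,b}$ are saturated. For bucket recursive trees the argument is again immediate, since the attraction probability $\frac{b}{n}$ does not depend on the out-degree: summing over the $N_{n,b}$ saturated buckets and taking expectations gives $\E(N_{n,b})\cdot\frac{b}{n}$. For \Dit{} and \Port, however, the attraction probability of a saturated node involves $\grad^+(v)$, so the quantity $\sum_{k=1}^{N_{n,b}}\grad^+(v_{n,b,k})$ enters. The crucial step is to recognise this as a global tree statistic: since all internal nodes of a bucket increasing tree are saturated, the parent of every node is a saturated bucket, whence the sum of out-degrees over all saturated buckets equals the number of edges, i.e.\ the total number of buckets minus one, $\sum_{j=1}^{b}N_{n,j}-1$. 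This is precisely the node-sum versus edge-sum bookkeeping already used in the proof of Theorem~\ref{BINCprop1}; compare~\eqref{BINCeqn001} and~\eqref{BINCeqn002}. Substituting this identity into $\sum_{k=1}^{N_{n,b}}\P\{n+1<_t v_{n,b,k}\}$ and taking expectations collapses the $\grad^+$-dependence into a single correction term and produces the claimed formula for $\P\{K_{n+1}=1\}$ for both remaining families.

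The only genuine obstacle is this last bookkeeping: after summing the per-bucket probabilities $\frac{(d-1)b+1-\grad^+(v)}{(d-1)n+1}$ (respectively $\frac{\grad^+(v)+(\alpha+1)b-1}{(\alpha+1)n-1}$) over the $N_{n,b}$ saturated buckets, one must check that the out-degree contributions cancel against the edge count so that only $N_{n,b}$ and $\sum_{j=1}^{b}N_{n,j}$ survive; linearity of expectation then finishes the computation. All other steps are routine substitutions into Definitions~\ref{def0}--\ref{def2}.
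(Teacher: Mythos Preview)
Your proposal is correct and follows essentially the same approach as the paper: start from the conditional identities $\P\{K_{n+1}=m\mid\bN_n\}=\sum_{k}\P\{n+1<_t v_{n,m-1,k}\}$ displayed just before the proposition, observe that unsaturated buckets have out-degree zero so the summands for $2\le m\le b$ are constant, and then take expectations. The paper compresses the $m=1$ case for \dit{} and \port{} into the single phrase ``Consequently, summation leads to the following result''; you make explicit the key step it suppresses, namely that $\sum_{k=1}^{N_{n,b}}\grad^{+}(v_{n,b,k})$ equals the total number of edges $\sum_{j=1}^{b}N_{n,j}-1$ (since only saturated nodes can be internal), which is exactly the bookkeeping of~\eqref{BINCeqn001}--\eqref{BINCeqn002}. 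Your computation in fact shows that the denominator of the correction term for \dit{} should be $(d-1)n+1$ rather than $(d-1)b+1$ as printed in the statement, consistent with the \port{} formula.
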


In the following we consider generalized P\'olya-Eggenberger urn models (see, e.g., \cite{Janson2004}) with $b$ different types of balls. 
For bucket recursive trees Mahmoud and Smythe~\cite{MahSmy1995} studied the following urn model.

\begin{urn}[Bucket recursive trees]
Consider a balanced urn with balls of $b$ colors and let $Q_{n,m}$ denote the number of balls of type $m$, $1\le m\le b$, in the urn after $n$ draws. $(Q_{n,1},\dots,Q_{n,b})$ denotes the corresponding random vector at time $n$. At each time step, draw one ball at random from the urn, observe its color, and add balls according to the ball replacement matrix
\[
M=
\left(
\begin{matrix}
-1 & 2 & 0 & \dots & 0\\
0 & -2 & 3 & \dots & 0\\
\vdots & & \ddots & & \vdots\\
0 & \dots &0& -(b-1) &b \\
1 & 0 & 0 &\dots  &0 \\
\end{matrix}
\right)
\]
and initial composition a single ball of type one. Then, 
the random variables $Q_{n,m}$ are related to the node types via
\[
N_{n,m} = \frac{Q_{n,m}}{m},\quad 1\le m\le b.
\]
\end{urn}
The characteristic polynomial of $M$ is given 
\begin{equation}
\label{charPol1}
\chi_M(\lambda)=\det(M-\lambda I)=(-1)^b(\auffak{\lambda}{b}-b!).
\end{equation}
For $b\le 26$ and $b > 26$, respectively, there occurs a phase change: the limit law changes from normal to non-normal, which is due to the structure of the characteristic polynomial 
and the general results revealed in \cite{Janson2004}. We assume that the eigenvalues $\lambda_1,\dots,\lambda_b$ are indexed according to their real parts
\[
1=\Re \lambda_1 \ge \Re \lambda_2 \dots \ge \Re \lambda_b.
\]
If the real part of $\lambda_2$ exceeds $\frac12$ then the limit law changes. In the following we denote the random vector of ball types as $\bQ_{n}=(Q_{n,1},\dots,Q_{n,b})$.
\begin{theorem}[Mahmoud and Smythe~\cite{MahSmy1995}]
For $b\le 26$ the limit law of $\frac{1}{\sqrt{n}}\big(\bQ_n-\E(\bQ_n)\big)$ is asymptotically normal. 
For $b>26$ there is no normal limit law (under the same normalization).
\end{theorem}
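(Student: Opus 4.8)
The plan is to view the urn as a balanced, tenable, irreducible generalized P\'olya--Eggenberger urn and to invoke the functional limit theorems of Janson~\cite{Janson2004}, so that the entire question is reduced to the location of the eigenvalues of the replacement matrix $M$; the threshold $b=26$ then emerges from a delicate analysis of the roots of $\auffak{\lambda}{b}=b!$.

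First I would check the hypotheses of~\cite{Janson2004}. The replacement is deterministic given the drawn colour; every row of $M$ sums to $1$, so the urn is balanced with activity $1$ and the total number of balls after $n$ draws is the deterministic quantity $n+1$. Since, starting from one ball of type one, $Q_{n,m}$ is always a nonnegative multiple of $m$, the urn is tenable, and $M$ is irreducible (from type $m$ one reaches type $m+1$, and from type $b$ one returns to type $1$). By~\eqref{charPol1} the eigenvalues of $M$ are exactly the roots of $\auffak{\lambda}{b}=b!$, the Perron--Frobenius one being the simple root $\lambda_1=1$, which strictly dominates the others in real part and has strictly positive left and right eigenvectors. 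Janson's theorems then yield the following dichotomy in terms of the root $\lambda_2$ of second-largest real part: if $\Re\lambda_2<\tfrac12\lambda_1=\tfrac12$, then $\tfrac1{\sqrt n}\big(\bQ_n-\E(\bQ_n)\big)$ converges in distribution to a (possibly degenerate, owing to the deterministic total) multivariate normal law; if $\Re\lambda_2=\tfrac12$, the same holds after rescaling by $\sqrt{n\log n}$; and if $\Re\lambda_2>\tfrac12$, then after an appropriate normalization by a power $n^{\Re\lambda_2}$ (possibly with an oscillating factor when $\lambda_2\notin\R$) the vector $\bQ_n-\E(\bQ_n)$ converges a.s.\ and in $L^2$ to a non-degenerate, non-normal random vector, so that $\tfrac1{\sqrt n}\big(\bQ_n-\E(\bQ_n)\big)$ admits no Gaussian limit (its norm even tends to infinity).

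It therefore remains to determine, for each $b$, whether all non-principal roots of $\binom{\lambda+b-1}{b}=1$ lie in the half-plane $\Re\lambda<\tfrac12$. Writing $\lambda=\sigma+i\tau$, such a root satisfies the two real equations
\[
\prod_{k=0}^{b-1}\frac{(\sigma+k)^2+\tau^2}{(k+1)^2}=1,\qquad \sum_{k=0}^{b-1}\arg(\sigma+k+i\tau)\in 2\pi\mathbb{Z}.
\]
One isolates the conjugate pair $\lambda_2,\overline{\lambda_2}$ realizing the second-largest real part (intuitively, the non-trivial root closest to the positive real axis), argues that $\Re\lambda_2$ moves to the right as $b$ increases, and locates the crossing of the line $\Re\lambda=\tfrac12$ by explicit estimates: $\Re\lambda_2<\tfrac12$ for $b=26$ and $\Re\lambda_2>\tfrac12$ for $b=27$. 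This is precisely the computation performed by Mahmoud and Smythe~\cite{MahSmy1995}. Feeding it back into the dichotomy above gives asymptotic normality for $b\le26$ and its failure for $b\ge27$.

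The hard part is exactly this last step. The value $26$ is not forced by any soft argument; it comes from a careful, essentially numerical, control of the second eigenvalue --- one must rule out that some other non-principal root overtakes real part $\tfrac12$ for small $b$, and must make the monotonicity in $b$ (or at least the one-sided bounds needed at $b=26$ and $b=27$) rigorous. In comparison, the algebraic input --- the factorization~\eqref{charPol1} of $\chi_M$, obtainable by a cofactor expansion along the last row --- and the verification of Janson's structural hypotheses are routine.
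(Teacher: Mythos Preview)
The paper does not supply its own proof of this theorem; it is quoted as a result of Mahmoud and Smythe~\cite{MahSmy1995} and left uncited beyond that attribution. What the paper does provide is exactly the scaffolding you describe: the characteristic polynomial~\eqref{charPol1}, the remark that the phase change is governed by whether $\Re\lambda_2$ exceeds $\tfrac12$, and a pointer to the general urn framework of Janson~\cite{Janson2004}. So your outline matches the paper's own framing of why the result holds.

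One small historical caveat: Mahmoud and Smythe's paper is from 1995, predating Janson's 2004 functional limit theorems, so their original proof could not literally have invoked~\cite{Janson2004}. They instead carried out a direct martingale analysis tailored to this urn, together with the explicit eigenvalue computation you mention. Your route through Janson's general theorems is the cleaner modern packaging of the same dichotomy, and it is the one the present paper implicitly endorses; the substantive content --- locating the crossing of $\Re\lambda_2=\tfrac12$ between $b=26$ and $b=27$ for the roots of $\auffak{\lambda}{b}=b!$ --- is identical in both approaches and, as you correctly flag, is the only genuinely delicate step.
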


Here we introduce two new urn models.

\begin{urn}[\Port]
Consider the urn with ball replacement matrix
\[
M=
\left(
\begin{matrix}
-\alpha & 2\alpha +1  & 0 & \dots & 0\\
0 & -(2\alpha +1) & 3\alpha +2 & \dots & 0\\
\vdots & & \ddots & & \vdots\\
0 & \dots &0& -((b-1)\alpha +b-2) &b\alpha +b-1 \\
\alpha & 0 & 0 &\dots  &1 \\
\end{matrix}
\right)
\]
and initial composition $\alpha$ balls of type one. Then, 
the random variables $Q_{n,m}$ are related to the node types by
\[
N_{n,m} = \frac{Q_{n,m}}{m\alpha +m-1},\quad 1\le m\le b.
\]
\end{urn}
The characteristic polynomial of $M$ is given 
\begin{equation}
\label{charPol2}
\chi_M(\lambda)=\det(M-\lambda I)=(-1)^b(\alpha+1)^b\Big(\auffak{\big(\frac{\lambda-1}{\alpha+1}\big)}{b}-\auffak{\big(\frac{\alpha}{\alpha+1}\big)}{b}\Big).
\end{equation}

\begin{urn}[\Dit]
Consider the urn  with ball replacement matrix
\[
M=
\left(
\begin{matrix}
-d & 2d-1  & 0 & \dots & 0\\
0 & -(2d-1) & 3d-2 & \dots & 0\\
\vdots & & \ddots & & \vdots\\
0 & \dots &0& -((b-1)d -(b-2)) &bd-(b-1) \\
d & 0 & 0 &\dots  &-1 \\
\end{matrix}
\right)
\]
and initial composition $d$ balls of type one. Then, 
the random variables $Q_{n,m}$ are related to the node types by
\[
N_{n,m} = \frac{Q_{n,m}}{md -(m-1)},\quad 1\le m\le b.
\]
\end{urn}

The characteristic polynomial of $M$ is given 
\begin{equation}
\label{charPol3}
\chi_M(\lambda)=\det(M-\lambda I)=(-1)^b(d-1)^b\Big(\auffak{\big(\frac{\lambda+1}{d-1}\big)}{b}-\auffak{\big(\frac{d}{d-1}\big)}{b}\Big).
\end{equation}

\smallskip

As indicated by the connection between $K_n$ and $\bN_n$, there is also a close connection between the characteristic polynomials~\eqref{charPol1},~\eqref{charPol2},~\eqref{charPol3} and the indicial equation~\eqref{indicial}:
\[
 \lambda^{\overline{b}} = \fallfak{(b+\kappa)}{b}.
\]
In particular, there is an affine transformation between these two polynomials. 
We note in passing that the general theorems of Janson~\cite{Janson2004} and M\"uller~\cite{Mueller2016} allow to describe a phase change in the limit laws for $\mathbf{N}_n$ similar to the results
of Mahmoud and Smythe~\cite{MahSmy1995}.

\section{Applications}
In the following we present a few applications of the results obtained in Section~\ref{BINCsecini}. The stochastic growth rule discussed in Subsection~\ref{secDESCsub1} and the analysis of the initial bucket size $K_n$ can be used to analyze several parameters. 
We consider in the following the random variable $Y_{n,j}$, which counts the number of descendants of element $j$, i.e., the total number of elements with a label greater or equal $j$ contained in the subtree rooted with the bucket containing element $j$, in a random bucket increasing tree (with maximal bucket size $b$) of size $n$.
For this random variable we provide the exact distribution, as well as a decomposition of the random variable of interest in terms of the initial bucket size $K_n$.
Then we apply our results to the root degree as well as the out-degree $X_{n,j}$ of the node containing element $j$. There we also present a decomposition of the random variable of interest, complementing earlier results. 

\subsection{Descendants in bucket increasing trees} 
In order to avoid degeneracy we assume that $j\ge b+1$ (for $1\le j\le b$ it holds $Y_{n,j}=n+1-j$). Explicit results for the probability mass functions and the moments of both r.v.\ $Y_{n,j}$ and $X_{n,j}$ can be obtained in principle by purely combinatorial means and a generating functions approach similar to the parameter initial bucket size treated above. However, here we take a different point of view utilizing the previous results for the initial bucket size to provide concise decompositions of the random variables based on results already known in the literature. Such decompositions readily lead to limit laws and seem to be more difficult to obtain by using a purely analytic combinatorial approach.

\smallskip

In order to obtain to analyze $Y_{n,j}$, we introduce a refinement of this r.v.: let $Y_{n,\ell,j}=Y_{n,j}\mid K_j=\ell$ denote $Y_{n,j}$ conditioned on the event $K_j=\ell$.
According to the stochastic growth processes defined in Subsection~\ref{secDESCsub1} we obtain the recurrence relation
\begin{align*}
 \P\{Y_{n+1,\ell,j}=m\} = \frac{c_1(m+\ell-2)+c_2}{c_1n+c_2}\P\{Y_{n,\ell,j}=m-1\}
+\frac{c_1(n+1-m-\ell)}{c_1n+c_2}\P\{Y_{n,\ell,j}=m\},
\end{align*}
for $m\ge 1$ and the initial value $Y_{j,\ell,j}=1$. The parameters $c_1,c_2$ occurring in this description of the law of $Y_{n,\ell,j}$ are determined by the fraction $\frac{c_2}{c_1}=\kappa$ as given in~\eqref{cc}.
Alternatively, $Y_{n,\ell,j}$ can be described as follows.
\begin{prop}
The r.v.\ $Y_{n,\ell,j}$ can be described by a sum of dependent random variables $A_{i,\ell,j}$, $i\ge j$, all taking values in $\{0,1\}$, and initial value $A_{j,\ell,j}=1$, where $A_{i,\ell,j}$ denotes the indicator variable of the event that label $i$ is a descendant of label $j$ conditioned on the event $K_{j}=\ell$, and we get
\begin{equation}
\label{RVDesc1}
Y_{n,\ell,j}= \sum_{i=j}^{n}A_{i,\ell,j},\quad \P\{A_{i+1,\ell,j}=1 \mid Y_{i,\ell,j}\}= \frac{c_1(\ell-1+Y_{i,\ell,j})+c_2}{c_1 i+c_2}.
\end{equation}
\end{prop}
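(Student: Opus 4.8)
The plan is to derive the representation \eqref{RVDesc1} directly from the recurrence relation for $\P\{Y_{n,\ell,j}=m\}$ stated just before the proposition, by exhibiting the process $(Y_{i,\ell,j})_{i\ge j}$ as a Markov chain whose one-step increments are $0$--$1$ valued. First I would set $A_{i,\ell,j} := Y_{i,\ell,j} - Y_{i-1,\ell,j}$ for $i > j$, together with the initial value $A_{j,\ell,j} := Y_{j,\ell,j} = 1$ coming from $Y_{j,\ell,j}=1$; summing this telescoping difference immediately yields $Y_{n,\ell,j} = \sum_{i=j}^{n} A_{i,\ell,j}$. The content of the proposition is then twofold: that each $A_{i,\ell,j}$ indeed only takes values in $\{0,1\}$, and that the conditional probability of a $1$-step given the current state is $\frac{c_1(\ell-1+Y_{i,\ell,j})+c_2}{c_1 i+c_2}$.

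For the second point I would read off the transition probabilities from the recurrence. Comparing
\[
\P\{Y_{n+1,\ell,j}=m\} = \frac{c_1(m+\ell-2)+c_2}{c_1n+c_2}\,\P\{Y_{n,\ell,j}=m-1\}
+\frac{c_1(n+1-m-\ell)}{c_1n+c_2}\,\P\{Y_{n,\ell,j}=m\}
\]
with the law of total probability, the process moves from state $m-1$ to state $m$ with probability $\frac{c_1((m-1)+\ell-1)+c_2}{c_1 n + c_2}$ and stays in state $m$ with the complementary weight; that is, from state $Y_{n,\ell,j}=v$ one has $\P\{Y_{n+1,\ell,j}=v+1 \mid Y_{n,\ell,j}=v\} = \frac{c_1(\ell-1+v)+c_2}{c_1 n+c_2}$ and $\P\{Y_{n+1,\ell,j}=v \mid Y_{n,\ell,j}=v\} = \frac{c_1(n+1-v-\ell)}{c_1 n+c_2}$. (One checks these two add to $1$, which they do identically.) This is exactly $\P\{A_{i+1,\ell,j}=1 \mid Y_{i,\ell,j}\} = \frac{c_1(\ell-1+Y_{i,\ell,j})+c_2}{c_1 i+c_2}$ after renaming $n\to i$, and it shows $A_{i+1,\ell,j}\in\{0,1\}$ since only the two transitions $v\mapsto v$ and $v\mapsto v+1$ carry positive mass. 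Interpreting $A_{i,\ell,j}$ probabilistically: when label $i+1$ is inserted it becomes a descendant of $j$ precisely when it is attracted by a node in the subtree rooted at the bucket of $j$; by the growth rules of Definitions~\ref{def0}--\ref{def2} the total attraction weight of that subtree, given that it currently contains $Y_{i,\ell,j}$ of the labels $\{j,\dots,i\}$ plus the $\ell-1$ labels smaller than $j$ sitting in $j$'s bucket, is proportional to $c_1(\ell-1+Y_{i,\ell,j})+c_2$, with normalization $c_1 i + c_2$ — which is why the same rational function appears, and why $A_{i,\ell,j}$ legitimately is the indicator of ``label $i$ is a descendant of $j$ given $K_j=\ell$''.

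I would also remark that the $A_{i,\ell,j}$ are dependent (the conditional law of $A_{i+1,\ell,j}$ genuinely involves the partial sum $Y_{i,\ell,j}$), so no independence is claimed; the point of the proposition is only the additive $0$--$1$ structure and the explicit Markov kernel, both of which will be the input for the subsequent distributional analysis of $Y_{n,j}$. There is no real obstacle here — the argument is a direct translation between a second-order recurrence in $(n,m)$ and the first-order Markovian dynamics of the increment process — so the only care needed is bookkeeping: verifying the weights sum to one, matching the index shift $n\leftrightarrow i$ correctly, and confirming that the probabilistic meaning of $A_{i,\ell,j}$ stated in the proposition is consistent with the growth rules via the identification $\kappa = c_2/c_1$ of \eqref{cc}.
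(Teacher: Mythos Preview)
Your argument is correct, but it takes a different route from the paper. You read the one-step transition probabilities off the recurrence relation stated just before the proposition, observe that only the increments $0$ and $1$ occur, and conclude. The paper instead ignores the recurrence and goes back to the stochastic growth rules of Definitions~\ref{def0}--\ref{def2}: it fixes the node $v$ containing label $j$, splits into the cases $m<b+1-\ell$ (bucket not yet saturated, out-degree zero) and $m\ge b+1-\ell$ (bucket saturated, $r$ further nodes $u_1,\dots,u_r$ in the subtree), and in the second case sums the individual attraction probabilities using the bookkeeping identities $\sum_k c(u_k)=m-(b+1-\ell)$ and $\grad^{+}(v)+\sum_k\grad^{+}(u_k)=r$, verifying family by family that the total collapses to $\frac{c_1(\ell-1+m)+c_2}{c_1 i+c_2}$.

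What each approach buys: yours is shorter and makes the Markov structure transparent, but it leans on the recurrence, which in the paper is asserted rather than derived; so logically you have shifted the work rather than eliminated it. The paper's computation is the substantive step---it shows that the total attraction weight of the subtree depends only on the number of labels it contains and not on how they are distributed among buckets, which is not obvious a priori for \dit\ and \port\ since $p(v)$ involves $\grad^{+}(v)$. Your one-sentence probabilistic gloss (``total attraction weight \dots\ proportional to $c_1(\ell-1+Y_{i,\ell,j})+c_2$'') is exactly the claim the paper's case analysis establishes; if you want your proof to stand on its own you should flesh that sentence out along those lines, or else state explicitly that you are taking the recurrence as given.
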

\begin{proof}
It suffices to show that the probabilities $\P\{A_{i+1,\ell,j}=1 \mid Y_{i,\ell,j}\}$ are indeed given as stated above.
Given $K_j$ assume that node $v$ containing label $j$ has $m\ge 1$ descendants at time $i\ge j$. 
If $m< b+1-K_j$ then node $v$ has out-degree zero and capacity $c(v)=K_j+m-1$. It attracts a new label $i+1$
with probability $p(v)=p_{i+1}(v)$ determined directly by the stochastic growth rules:
\[
p(v)=
\begin{cases}
      \displaystyle{\frac{K_j+m-1}{i}}, & \quad  \text{bucket recursive trees}, \\[3mm]
      \displaystyle{\frac{(d-1)(K_j+m-1)+1}{(d-1)i+1}}, & \quad \dit, \\[3mm]
      \displaystyle{\frac{(\alpha+1)(K_j+m-1)-1}{(\alpha+1)i-1}}, & \quad \port.
\end{cases}
\]
Otherwise, assume that the number of descendants is given by $m\ge b+1-K_j$. 
Thus, node $v$ is saturated and the remaining $m-(b+1-K_j)$ labels are distributed amongst the $r$ non-root nodes $u_1,\dots,u_r$ in the subtree rooted at $v$. The probability $\P\{i+1 <_d j\}$ that label $i+1$ is a descendant of label $j$ is given by
\[
\P\{i+1 <_d j\}= \P\{i+1 <_c v\}+ \sum_{k=1}^{r}\P\{i+1 <_c u_k\},
\]
where $\P\{i+1 <_c x\}$ denotes the probability that label $i+1$ is contained in the node $x$. 
We note that 
\[
\sum_{k=1}^{r}c(u_k)=m-(b+1-K_j),\qquad\grad^{+}(v)+\sum_{k=1}^{r}\grad^{+}(u_k)=r.
\]
For bucket recursive trees we obtain
\[
\P\{i+1 <_d j\}= \frac{b}i + \sum_{k=1}^{r}\frac{c(u_k)}{i}
= \frac{b+m-(b+1-K_j)}{i}=\frac{K_j+m-1}{i}.
\]
For \Dit{} we have
\begin{align*}
\P\{i+1 <_d j\} &= \frac{(d-1)c(v)+1-\grad^{+}(v)}{(d-1)i+1} + \sum_{k=1}^{r}\frac{(d-1)c(u_k)+1-\grad^{+}(u_k)}{(d-1)i+1}\\
&= \frac{(d-1)b+r+1 + (d-1)(m-(b+1-K_j))-r}{(d-1)i+1}=\frac{(d-1)(K_j+m-1)+1}{(d-1)i+1}.
\end{align*}
Finally, for \Port{} we get
\begin{align*}
\P\{i+1 <_d j\} &= \frac{\grad^{+}(v)+(\alpha+1)c(v)-1}{(\alpha+1)i-1} + \sum_{k=1}^{r}\frac{\grad^{+}(u_k)+(\alpha+1)c(u_k)-1}{(\alpha+1)i-1}\\
&= \frac{r+(\alpha+1)(b+m-(b+1-K_j))-r-1}{(\alpha+1)i-1}=\frac{(\alpha+1)(K_j+m-1)-1}{(\alpha+1)i-1}.
\end{align*}
Summarizing we obtain the probabilities
\begin{equation*}
  \P\{i+1 <_d j \mid Y_{i,\ell,j}=m\} = 
	\begin{cases}
      \displaystyle{\frac{\ell-1+m}{i}}, & \quad  \text{bucket recursive trees}, \\[3mm]
      \displaystyle{\frac{(d-1)(\ell-1+m)+1}{(d-1)i+1}}, & \quad \dit, \\[3mm]
      \displaystyle{\frac{(\alpha+1)(\ell-1+m)-1}{(\alpha+1)i-1}}, & \quad \port,
\end{cases}
\end{equation*}
thus leading to the decomposition stated in \eqref{RVDesc1}.
\end{proof}
We remark in passing that, alternatively, $Y_{n,\ell,j}$ can also be described via a generalized P\'olya urn model; see~\cite{KubPanMomSeq}.

\smallskip

It is a key observation that the distribution of $Y_{n,\ell,j}$ is identical to the distribution of a r.v.\ $D_{n,\ell,j}$, counting so-called generalized descendants $D_{n,\ell,j}$ in ordinary families of increasing trees, which has been introduced and studied in \cite{KubPanCombDesc}. Namely, the r.v.\ $D_{n,\ell,j}$ also admits a description as a sum of dependent r.v.\ equivalent to \eqref{RVDesc1}, see \cite[equations $(7)$-$(8)$]{KubPanCombDesc}, thus $Y_{n,\ell,j} \stackrel{(d)}{=} D_{n,\ell,j}$. Unconditioning immediately leads to the following result.
\begin{prop}[Decomposition of the number of descendants]
The random variable $Y_{n,j}$ counting the number of descendants of label $j$ in a bucket increasing tree of size $n$ for the families bucket recursive trees, \Dit, and \Port is related to the random variable $D_{n,\ell,j}$, counting generalized descendants with parameter $\ell$ for the corresponding ordinary increasing tree families as studied in \cite{KubPanCombDesc}, where the parameter $\ell$ is given by the random variable $K_j$ measuring the initial bucket size of label $j$:
\[
Y_{n,j} \law D_{n,K_j,j}.
\]
\end{prop}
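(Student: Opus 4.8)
The plan is to reduce the claim to the already-established conditional identity $Y_{n,\ell,j} \law D_{n,\ell,j}$ together with an unconditioning step over the random variable $K_j$. First I would recall that by the preceding proposition the conditioned variable $Y_{n,\ell,j} = Y_{n,j} \mid K_j = \ell$ admits the decomposition \eqref{RVDesc1} as a sum of dependent indicator variables $A_{i,\ell,j}$, with $A_{j,\ell,j}=1$ and transition probabilities $\P\{A_{i+1,\ell,j}=1 \mid Y_{i,\ell,j}\} = \frac{c_1(\ell-1+Y_{i,\ell,j})+c_2}{c_1 i + c_2}$, where $c_2/c_1 = \kappa$ is as in \eqref{cc}. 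The point is that this is \emph{exactly} the recursive description of the generalized descendants $D_{n,\ell,j}$ given in \cite[equations $(7)$–$(8)$]{KubPanCombDesc}: both are Markov chains on $\{0,1,\dots\}$ (or on the partial sums) started at the same value with the same one-step transition law, hence they have identical finite-dimensional distributions, and in particular $Y_{n,\ell,j} \law D_{n,\ell,j}$ for each fixed $\ell$.

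Next I would perform the unconditioning. Since $Y_{n,j}$ is obtained from $Y_{n,\ell,j}$ by mixing over the law of $K_j$, for every $m$ we have
\[
\P\{Y_{n,j}=m\} = \sum_{\ell=1}^{b} \P\{K_j=\ell\}\,\P\{Y_{n,\ell,j}=m\} = \sum_{\ell=1}^{b} \P\{K_j=\ell\}\,\P\{D_{n,\ell,j}=m\} = \P\{D_{n,K_j,j}=m\},
\]
where in the last equality $D_{n,K_j,j}$ denotes the variable obtained by first sampling $\ell$ according to the distribution of $K_j$ (the same $K_j$ governing the bucket increasing tree) and then, conditionally independently, running the $D$-process with parameter $\ell$. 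One must be a little careful about what $D_{n,K_j,j}$ means: it is the mixture, so the statement $Y_{n,j} \law D_{n,K_j,j}$ is precisely the assertion that these two mixtures coincide, which follows term by term from the conditional equality.

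The remaining routine check is that the description of $Y_{n,\ell,j}$ and that of $D_{n,\ell,j}$ really match after translating notation — i.e.\ that the parameter $\ell$ in $D_{n,\ell,j}$ plays the role of the initial bucket load and that the constants $c_1,c_2$ (equivalently $\kappa$) for bucket recursive trees, \dit, and \port\ correspond to the three admissible increasing tree families in \cite{KubPanCombDesc}. This is where I expect the only genuine (though minor) obstacle: reconciling the indexing conventions, the shift by $\ell-1$ versus $\ell$, and the exact normalisation of the transition probabilities between the two papers, so that \eqref{RVDesc1} is literally \cite[(7)–(8)]{KubPanCombDesc}. Once that dictionary is in place, the proof is immediate: conditional equality in distribution plus identical mixing weights yields $Y_{n,j} \law D_{n,K_j,j}$.
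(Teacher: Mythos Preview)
Your proposal is correct and matches the paper's approach exactly: the paper also derives the conditional identity $Y_{n,\ell,j}\law D_{n,\ell,j}$ by observing that both satisfy the same recursive description \eqref{RVDesc1} (citing \cite[equations~(7)--(8)]{KubPanCombDesc}), and then states that ``unconditioning immediately leads to the following result.'' Your write-up simply spells out that unconditioning step in more detail than the paper does.
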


The probability mass function of $D_{n,\ell,j}$ as well as limit laws have been obtained in~\cite{KubPanCombDesc} using lattice path counting arguments. Using them this easily leads to the following limiting distribution result of $Y_{n,j}$, also slightly refining the results of~\cite{BucketPanKu2009} for bucket recursive trees. We thus omit the details.
\begin{coroll}[Limit laws for the number of descendants]
For $n \to \infty$ and $j=j(n)$, the random variable $Y_{n,j}$ has the following limit laws, depending on the random initial bucket size $K_j$ or its limit $K$.
\begin{enumerate}
\item The region for $j$ fixed. The normalized random variable $Y_{n,j}$ is asymptotically Beta-distributed, $Y_{n,j}/n \claw \beta(K_j+\frac{c_{2}}{c_{1}},j-K_j)$.
                    
\item The region for small $j$: $j \to \infty$ such that $j = o(n)$.
        The normalized random variable $j Y_{n,j}/n$ is
        asymptotically Gamma-distributed, $j Y_{n,j}/n \claw \gamma(K+\frac{c_{2}}{c_{1}},1)$.
  
\item The central region for $j$: $j \to \infty$ such that $j \sim \rho n$, with $0 < \rho < 1$.
        The shifted random variable $Y_{n,j}-1$ is asymptotically
        negative binomial-distributed, $Y_{n,j}-1\claw \NegBin(K+\frac{c_{2}}{c_{1}},\rho)$.
        
\item The region for large $j$: $j \to \infty$ such that $k := n-j = o(n)$.
        The random variable $Y_{n,j}$ has asymptotically all its mass concentrated at $1$, $
        \P\{Y_{n,j}=1\}=1+\mathcal{O}(\frac{k}{n})$.
\end{enumerate}
\end{coroll}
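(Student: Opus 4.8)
The plan is to combine the distributional identity $Y_{n,j}\law D_{n,K_j,j}$ from the preceding proposition with the limit laws for the generalized descendants $D_{n,\ell,j}$ obtained in \cite{KubPanCombDesc}, and then to average over the finitely supported random parameter $K_j$. Recall from \cite{KubPanCombDesc} that, for each \emph{fixed} integer $\ell\ge 1$ and with $c_2/c_1=\kappa$ as in \eqref{cc}, the r.v.\ $D_{n,\ell,j}$ satisfies precisely the four regimes claimed in the corollary, but with the deterministic value $\ell$ playing the role of $K_j$, respectively $K$: for $j$ fixed, $D_{n,\ell,j}/n\claw\beta(\ell+\tfrac{c_2}{c_1},j-\ell)$; for $j=j(n)\to\infty$ with $j=o(n)$, $jD_{n,\ell,j}/n\claw\gamma(\ell+\tfrac{c_2}{c_1},1)$; for $j\sim\rho n$, $D_{n,\ell,j}-1\claw\NegBin(\ell+\tfrac{c_2}{c_1},\rho)$; and for $n-j=o(n)$, $\P\{D_{n,\ell,j}=1\}=1+O\big(\tfrac{n-j}{n}\big)$. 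Since $1\le K_j\le b$ always, the identity $Y_{n,j}\law D_{n,K_j,j}$ is realized hierarchically — first sample $K_j$ from its exact distribution (the one obtained from the theorem on $K_n$ with $n$ replaced by $j$), then run the generalized-descendants chain conditioned on that value — so that for every $x\in\R$
\[
\P\{Y_{n,j}\le x\}=\sum_{\ell=1}^{b}\P\{K_j=\ell\}\,\P\{D_{n,\ell,j}\le x\},
\]
and similarly for the rescaled versions. This is a \emph{finite} sum, so no uniformity in $\ell$ is needed and one may pass to the limit $n\to\infty$ term by term.

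In the region $j$ fixed the mixing weights $\P\{K_j=\ell\}$ do not depend on $n$; hence the display above yields, at every continuity point, $Y_{n,j}/n\claw\sum_{\ell=1}^{b}\P\{K_j=\ell\}\,\beta(\ell+\tfrac{c_2}{c_1},j-\ell)$, which is exactly the mixture $\beta(K_j+\tfrac{c_2}{c_1},j-K_j)$ of statement~(1) (note $j\ge b+1$ forces $j-K_j\ge 1$, so all Beta parameters are admissible). In the three regimes with $j=j(n)\to\infty$ I would additionally use the discrete limit law $K_j\claw K$ proved earlier: then $\P\{K_j=\ell\}\to\P\{K=\ell\}$ for each $\ell\in\{1,\dots,b\}$ while the inner probabilities converge to those of $\gamma(\ell+\tfrac{c_2}{c_1},1)$ respectively $\NegBin(\ell+\tfrac{c_2}{c_1},\rho)$, and a finite sum of products of convergent sequences converges to the product of the limits. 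This gives $jY_{n,j}/n\claw\gamma(K+\tfrac{c_2}{c_1},1)$ in regime~(2) and $Y_{n,j}-1\claw\NegBin(K+\tfrac{c_2}{c_1},\rho)$ in regime~(3); in regime~(4), since the $O\big(\tfrac{n-j}{n}\big)$ error is uniform over the finite range $\ell\le b$, the same mixing gives $\P\{Y_{n,j}=1\}=1+O\big(\tfrac{k}{n}\big)$. One cannot replace $K_j$ by its limit $K$ in regime~(1), precisely because there $j$ stays fixed and $K_j$ has not converged.

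The only step that will demand any care is making sure the coupling underlying $Y_{n,j}\law D_{n,K_j,j}$ is the hierarchical one just described, so that conditioning on $\{K_j=\ell\}$ really produces the factorized law $\P\{K_j=\ell\}\,\P\{D_{n,\ell,j}\in\cdot\}$; this is what the preceding proposition, read together with \eqref{RVDesc1} and the analogous representation of $D_{n,\ell,j}$ in \cite{KubPanCombDesc}, provides. Apart from that — and from importing the continuity-point convergence of $D_{n,\ell,j}$ in the joint regime $j=j(n)\to\infty$ from \cite{KubPanCombDesc} (for regime~(4) one may alternatively use the elementary first-moment bound $\E(Y_{n,j}-1)=\sum_{i=j+1}^{n}\P\{A_{i,K_j,j}=1\}=O(k/n)$ that follows from \eqref{RVDesc1} and $\E Y_{i,\ell,j}=O(i/j)$) — the argument will be pure bookkeeping over the finite index set $\{1,\dots,b\}$, which is why the details were omitted.
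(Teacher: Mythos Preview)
Your proposal is correct and follows exactly the route the paper indicates: it uses the decomposition $Y_{n,j}\law D_{n,K_j,j}$ together with the limit laws for $D_{n,\ell,j}$ from \cite{KubPanCombDesc}, and mixes over the finite support of $K_j$ (respectively its limit $K$ when $j\to\infty$). The paper simply states that these ingredients ``easily lead'' to the result and omits the details, whereas you have spelled out the finite-mixture argument and the passage to the limit term by term.
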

\begin{remark}
We mention the possibility to improve the distributional convergence of $Y_{n,j}$ in several ways. First, one can prove moment convergence. Second, local limit theorems can be deduced using the explicit expressions for the probability mass functions of $K_j$ and $D_{n,\ell,j}$. Third,
for fixed $j$ one can measure the difference between $Y_{n,j}/n$ and the limiting beta random variable in terms of a so-called martingale tail sum using discrete martingales. 
\end{remark}


\subsection{Node Degrees in bucket increasing trees} 
Let $X_{n,j}$ denote the random variable counting the out-degree of the bucket containing label $j$ in a size $n$ random bucket increasing tree.
By definition, we can decompose $X_{n,j}$ into a sequence of dependent indicator variables
\[
X_{n,j}=\sum_{\ell=j+1}^{n}\ind\{\ell <_a j\},
\]
where $\{\ell <_a j\}$ denotes the event that label $\ell$ is attached as a new node to the node containing label $j$. 
Similar to the case $b=1$, see for example~\cite{KubPan2005NodeDeg}, the random variable $X_{n,j}$ does not obey a uniform behavior, but it is similar to the r.v.\ number of descendants considered before. 
Basically, we will observe that for $b>1$ the random variable $X_{n,j}$ behaves similar
to the case $b=1$ once the bucket containing the label $j$ is fully saturated. 
Compared to the two ``stages'' of $Y_{n,j}$, insertion of label $j$ into a node of size $K_j$ and then attraction of new labels, we have three stages:
first, label $j$ is inserted into a node $v$ of size $K_j$; second, node $v$ attracts new labels until it is fully saturated; then, until time $n$ the node $v$ attracts new labels 
and its out-degree increases. 
In order to state the precise decompositions of $X_{n,j}$ we utilize our previous results for $K_j$ and $Y_{n,j}$. We also collect known results about additional random variables.

\bigskip

We introduce first the stopping time $\tau_{n,j}:=\min_{j\le\ell\le n}\{Y_{\ell,j}=b+1-K_j\}$ until the node containing label $j$ is saturated.
Let $\nabla$ denote the backward difference operator, i.e., $\nabla x_{k} := x_{k} - x_{k-1}$. The random variable $\tau_{n,j}$ can be expressed in terms of indicator variables as follows:
\[
\tau_{n,j}=\sum_{k=j}^{n}\ind\{Y_{k,j}= b+1-K_j,\,\nabla Y_{k,j}=1\}\cdot k + \ind\{Y_{n,j}< b+1-K_j\} \cdot n,
\]
where $Y_{j-1,j}=0$. Let $\P\{m<_t v \mid c(v)=b-1\}$ denote the conditional probability that node $v$, containing label $j$, attracts label $m$ conditioned on the event $c(v)=b-1$, as determined by the stochastic growth processes of the three tree families considered:
\[
\P\{m<_t v\mid c(v)=b-1\}=
\begin{cases}
      \displaystyle{\frac{b-1}{m-1}}, & \quad  \text{bucket recursive trees}, \\[3mm]
      \displaystyle{\frac{(d-1)(b-1)+1}{(d-1)(m-1)+1}}, & \quad \dit, \\[3mm]
      \displaystyle{\frac{(\alpha+1)(b-1)-1}{(\alpha+1)(m-1)-1}}, & \quad \port.
\end{cases}
\]
The probability mass function $\P\{\tau_{n,j}=m\}$ is readily obtained in terms of the probability mass functions of $K_j$, $Y_{n,j}$ and thus $Y_{n,\ell,j}$, where the constants $c_1,c_2$ are determined by~\eqref{cc} via $c_{2}/c_{1} = \kappa$. 

\smallskip

\begin{lemma}[Distribution of the saturation time]
For $1\le j \le b$ the random variable $\tau_{n,j}$ is deterministic: $\tau_{n,j}=b$.

For $j\ge b+1$ the probability mass function $\P\{\tau_{n,j}=m\}$ is given as follows:
 
\noindent{}For $m=j$ we have
\[
\P\{\tau_{n,j}=j\}=\P\{K_j=b\},
\]
for $j+1\le m<n$ we get
\[
\P\{\tau_{n,j}=m\}=\sum_{\ell=1}^{b-1}\P\{K_j=\ell\}\cdot\P\{Y_{m-1,\ell,j}=b-\ell\}\cdot \P\{m<_t v \mid c(v)=b-1\},
\]
whereas for $m=n$ we obtain
\[
\P\{\tau_{n,j}=n\}=\sum_{\ell=1}^{b-1}\P\{K_j=\ell\}\cdot \big( \P\{Y_{n,\ell,j}< b+1-\ell\}
+\P\{Y_{n-1,\ell,j}=b-\ell\}\cdot \P\{n<_t v\mid c(v)=b-1\}\big).
\]
\end{lemma}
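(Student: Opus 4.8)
The plan is to exploit the unit-step (birth-process) structure of the descendant counter $Y_{\ell,j}$ and to follow the bucket $v$ containing label $j$ through the two relevant phases of its life: its creation at size $K_j$ and its subsequent growth up to saturation. The preliminary observations I would record are the following. For fixed $j$ the sequence $(Y_{\ell,j})_{\ell\ge j}$ is non-decreasing with increments in $\{0,1\}$ and satisfies $Y_{j,j}=1$, since in a tree of size $j$ the label $j$ is its own only descendant. As long as $v$ is unsaturated it is a leaf, hence $\grad^{+}(v)=0$, and one has $c(v)=K_j+Y_{\ell,j}-1$; thus $v$ becomes saturated (i.e.\ $c(v)=b$) exactly at the first instant $Y_{\ell,j}$ attains $b+1-K_j$, which is by definition $\tau_{n,j}$. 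Finally, during this unsaturated phase a newly inserted label raises $Y_{\ell,j}$ by one precisely when it is attracted by $v$ and is then added to its bucket.

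The case $1\le j\le b$ is immediate: the root bucket collects exactly the labels $1,\dots,b$ and becomes saturated at step $b$, so $K_j=j$ and $Y_{\ell,j}=\ell-j+1$ for all $\ell\ge j$; since $b+1-K_j=b+1-j$, this gives $\tau_{n,j}=b$ (assuming $n\ge b$, which is the only relevant range).

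For $j\ge b+1$ I would condition on $K_j=\ell$. If $\ell=b$, the bucket $v$ is saturated already at time $j$, so $\tau_{n,j}=j$; conversely $\tau_{n,j}=j$ forces $Y_{j,j}=b+1-K_j$, i.e.\ $K_j=b$, whence $\P\{\tau_{n,j}=j\}=\P\{K_j=b\}$. If $1\le\ell\le b-1$ then $\tau_{n,j}>j$, and since $Y_{\cdot,j}$ moves in unit steps, for $j<m\le n$ the event $\{\tau_{n,j}=m\}$ coincides on $\{K_j=\ell\}$ with $\{Y_{m-1,j}=b-\ell\}\cap\{\text{label }m\text{ is attracted by }v\}$. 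On the event $\{K_j=\ell,\,Y_{m-1,j}=b-\ell\}$ the bucket $v$ is a leaf of capacity $b-1$ in the size-$(m-1)$ tree; since in each of the three evolution rules of Definitions~\ref{def0}--\ref{def2} the attraction probability $p(v)$ depends on $c(v)$, $\grad^{+}(v)$ and the current tree size alone, the conditional probability that label $m$ joins $v$ equals the tabulated value $\P\{m<_t v\mid c(v)=b-1\}$, regardless of the remaining structure. Multiplying $\P\{Y_{m-1,\ell,j}=b-\ell\}$ by this factor and summing over $\ell$ yields the stated formula for $j+1\le m<n$. For $m=n$ there is the extra possibility that $v$ is never saturated by time $n$, i.e.\ $Y_{n,j}<b+1-K_j$, which by the defining convention for $\tau_{n,j}$ also gives $\tau_{n,j}=n$; this event is disjoint from the ``saturated exactly at step $n$'' event treated above, and adding the two contributions (summed over $1\le\ell\le b-1$) produces the displayed two-term expression.

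The single delicate point is this conditional-independence step: on $\{K_j=\ell,\,Y_{m-1,j}=b-\ell\}$ the probability that label $m$ attaches to $v$ is exactly $\P\{m<_t v\mid c(v)=b-1\}$, with $\grad^{+}(v)=0$ already built in. It relies on the facts that an unsaturated bucket is necessarily a leaf and that all three growth rules make $p(v)$ a function of $c(v)$, $\grad^{+}(v)$ and $n$ only; the rest is routine bookkeeping of disjoint and exhaustive events using the unit-step monotonicity of $Y_{\cdot,j}$ and the identity $c(v)=K_j+Y_{\ell,j}-1$ valid before saturation.
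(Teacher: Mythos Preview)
Your proposal is correct and follows essentially the same approach as the paper's proof: condition on $K_j$, use the unit-step monotonicity of $Y_{\cdot,j}$ to identify $\{\tau_{n,j}=m\}$ with a hitting event, and split off the boundary cases $m=j$ and $m=n$. The paper's proof is considerably terser (it merely points to the indicator decomposition of $\tau_{n,j}$ and says ``condition on $K_j$''), whereas you make explicit the conditional-independence step---that on $\{K_j=\ell,\,Y_{m-1,j}=b-\ell\}$ the bucket $v$ is a leaf of capacity $b-1$, so the attraction probability is exactly the tabulated $\P\{m<_t v\mid c(v)=b-1\}$---which the paper leaves implicit.
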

\begin{remark}
In the case $j=b+1$ the expressions above simplify due to the fact that $K_{b+1}=1$, which is evident from the stochastic growth rules.
\end{remark}
\begin{proof}
For $1\le j\le b$ there is only one bucket present. It is saturated after the insertion of label $b$.
Assume that $j\ge b+1$. The probability $\P\{\tau_{n,j}=j\}$ is given by 
\[
\P\{Y_{j,j}= b+1-K_j,\,\nabla Y_{j,j}=1\}=\P\{1=b+1-K_j,\,Y_{j,j}-Y_{j-1,j}=1\}=\P\{K_j=b\}.
\]
For $j<m<n$ we obtain the probability of the event $\{Y_{k,j}= b+1-K_j,\,\nabla Y_{k,j}=1\}$
by conditioning on the initial bucket size $K_j$. Finally, for $j=n$ we also take into account the probability of the event 
$\{Y_{n,j}<b+1-K_j\}$.
\end{proof}

\smallskip

Let $\Be(p)$ denote a Bernoulli-distributed random variable:
\[
\P\{\Be(p)=1\}=p,\qquad \P\{\Be(p)=0\}=1-p.
\]
Furthermore, let $W_N(w_0,b_0)$ denote the number of white balls at time $N$ in a triangular urn model with balls of two colours and initial values $w_0>0$ and $b_0>0$, 
whose balanced replacement matrix given by
$
\left(
\begin{smallmatrix}
1& \alpha\\
0& 1+\alpha\\
\end{smallmatrix}
\right)
$.

We obtain the following result.
\begin{theorem}
The random variable $X_{n,j}$ counting the out-degree of label $j$ in a random bucket increasing tree of size $n$, with $1\le j\le n$, satisfies the following. 

\begin{itemize}
	\item For bucket recursive trees, $X_{n,j}$ is distributed as a random sum of mutually independent indicator variables:
\[
X_{n,j}\law\sum_{\ell=\tau_{n,j}+1}^{n}\ind(\ell<_c j),\qquad\text{with} \enspace \ind(\ell<_c j)=\Be(\textstyle{\frac{b}{\ell}}).
\]

\item For \Port, $X_{n,j}$ is distributed as the number of white balls in the balanced triangular urn model described above:
\[
X_{n,j}\law W_{n-\tau_{n,j}}(w_0,b_0),\quad w_0=b(\alpha+1)-1,\, b_0=(\alpha+1)(\tau_{n,j}-b).
\]
In both cases, the stopping time $\tau_{n,j}:=\min_{j\le\ell\le n}\{Y_{\ell,j}=b+1-K_j\}$ depends on $K_j$ as well as on $Y_{n,j}$.
\end{itemize}
\end{theorem}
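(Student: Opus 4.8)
The plan is to follow the bucket $v$ containing label $j$ through the three stages described just before the statement and to recognise the third stage as a classical stochastic process. When label $j$ is inserted, it lands in a bucket $v$ whose capacity is $K_j$; as long as $v$ stays unsaturated it is a leaf (all internal nodes are saturated by definition), so every label it subsequently attracts merely raises its capacity and keeps $\grad^{+}(v)=0$. As noted in the discussion preceding the theorem, $v$ becomes saturated exactly when the number of descendants of $j$ first reaches $b+1-K_j$, that is, at the time $\tau_{n,j}$; from then on $v$ is a saturated internal node and every label it attracts is appended below it as a new child bucket. Hence $\grad^{+}(v)$ stays $0$ through the first two stages and
\[
X_{n,j}=\sum_{\ell=\tau_{n,j}+1}^{n}\ind\{\ell<_a j\},
\]
so the problem reduces to the law of the third stage conditioned on $\tau_{n,j}$.

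First I would condition on the history of the growth process up to the random time $\tau_{n,j}$, which is a stopping time of the natural filtration, being the hitting time of the event ``the bucket containing $j$ is saturated''. By the Markov property of the tree-growth chain the conditional law of the later insertions is again governed by the growth rules of Definitions~\ref{def0}--\ref{def2}, now with $c(v)=b$ frozen. For bucket recursive trees the probability that $v$ attracts the inserted label equals $\frac{b}{m}$ when the current size is $m$ (Definition~\ref{def0}), which depends neither on $\grad^{+}(v)$ nor on any other feature of the tree. Since at each later step the conditional probability that $v$ attracts the inserted label is thus a deterministic constant, the indicators $\ind\{\ell<_a j\}$, $\ell>\tau_{n,j}$, are --- conditionally on $\tau_{n,j}$ --- mutually independent Bernoulli variables with the stated success probabilities, which is precisely the claimed representation of $X_{n,j}$ as a random sum of independent indicators.

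For \Port{} the probability that $v$ attracts the inserted label at current size $m$ equals $\frac{\grad^{+}(v)+(\alpha+1)b-1}{(\alpha+1)m-1}$, which now increases affinely with the current out-degree, so I would instead model the third stage by a two-colour P\'olya--Eggenberger urn. Take the ``white'' content to be the node weight $\grad^{+}(v)+(\alpha+1)b-1$ of $v$ and the ``black'' content to be the total weight of all the other nodes. At size $\tau_{n,j}$ the bucket $v$ is freshly saturated with $\grad^{+}(v)=0$, so the composition is $\big(w_0,b_0\big)=\big((\alpha+1)b-1,\,(\alpha+1)(\tau_{n,j}-b)\big)$, and its sum $(\alpha+1)\tau_{n,j}-1$ agrees with the global weight of a tree of that size. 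Each insertion is one draw: a white draw ($v$ attracts the label) raises the weight of $v$ by $1$ and creates a new leaf of weight $(\alpha+1)\cdot1-1=\alpha$, so the white ball is returned together with $\alpha$ black balls; a black draw sends the label to another node, and checking the two possibilities --- an unsaturated leaf whose capacity rises by $1$, or a saturated node that gains a new leaf child --- shows the non-$v$ weight always grows by exactly $\alpha+1$, so the black ball is returned with $\alpha$ further black balls. The third stage is therefore exactly the balanced triangular urn with replacement matrix $\left(\begin{smallmatrix}1&\alpha\\0&1+\alpha\end{smallmatrix}\right)$ run for $n-\tau_{n,j}$ steps, and $\grad^{+}(v)$ at size $n$ equals the number of white balls it accumulates, giving $X_{n,j}\law W_{n-\tau_{n,j}}(w_0,b_0)$.

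The step I expect to be the main obstacle is making this stage decomposition rigorous: one must check that $\tau_{n,j}$ is a stopping time for the tree-growth filtration and invoke a strong-Markov-type argument so that, conditioned on $\tau_{n,j}$, the post-saturation evolution of $v$ is ``fresh'' and driven solely by $c(v)=b$ and, for \Port, by $\grad^{+}(v)$; and in the \Port{} case the weight bookkeeping must be carried out carefully enough to see that the black increment is always $\alpha+1$ regardless of where a non-attracted label ends up. With those points in place, both distributional identities --- and the observation that $\tau_{n,j}$ is itself a functional of $K_j$ and of the trajectory of $Y_{\cdot,j}$ --- follow at once from Definitions~\ref{def0}--\ref{def2}.
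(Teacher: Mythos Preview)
Your argument is correct and follows precisely the route the paper sets up: the three-stage description preceding the theorem, the stopping time $\tau_{n,j}$, and the recognition of the post-saturation phase as, respectively, a sum of independent Bernoulli indicators (since for bucket recursive trees the attraction probability $b/m$ of a saturated bucket is a deterministic function of the current size) and a balanced triangular two-colour urn (for \Port). The paper in fact states the theorem without a formal proof, relying on the preparatory material; your write-up supplies exactly the missing verification, in particular the weight bookkeeping showing that a non-$v$ attraction always increases the black count by $\alpha+1$, and the strong-Markov justification for conditioning on $\tau_{n,j}$.

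One small point to tidy up: in the \Port{} case you conclude that ``$\grad^{+}(v)$ at size $n$ equals the number of white balls it accumulates''. With the paper's convention that $W_{N}(w_0,b_0)$ denotes the \emph{total} white count after $N$ draws (so $W_{0}=w_0$), the out-degree is the number of white \emph{draws}, i.e.\ $W_{n-\tau_{n,j}}-w_0$; the identity $X_{n,j}\law W_{n-\tau_{n,j}}(w_0,b_0)$ as literally stated is therefore off by the additive constant $w_0=b(\alpha+1)-1$. This is a notational slip in the statement rather than a flaw in your reasoning, but it is worth flagging when you write the final version.
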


\begin{remark}
A similar result holds for \Dit; however, the random variable $X_{n,j}$ is by definition bounded, thus we leave the result to the interested reader. 
We further note that the probability mass functions of $X_{n,j}$ can alternatively be obtained by a generating functions approach, but without revealing
the structural decompositions (compare with the corresponding results of~\cite{BucketPanKu2009} for bucket recursive trees).
\end{remark}

\section{Conclusion and Acknowledgments}
In this work we introduced two new families of bucket increasing trees, which can be generated by a stochastic growth process. 
We introduced a clustering process $\mathcal{C}$ for ordinary increasing trees that generate bucket increasing trees. 
Moreover, by modifying the map $\mathcal{C}$ we obtain bijections between certain
ordinary increasing tree families and families of bilabelled bucket increasing trees.
Additionally, we obtained a bijection between increasing diamonds and bilabelled bucket increasing trees. 

We analyzed structural properties of bucket increasing trees, in particular, the tree parameter $K_n$,
counting the initial bucket size of the node containing label $n$ in a random tree of size $n$. Using the combinatorial description as well as the tree evolution process, a study of further quantities in bucket increasing tree families is possible, e.g., we want to mention node distances.
Moreover, there exist relations of bucket increasing trees to further combinatorial structures as, e.g., certain models of series-parallel networks, see~\cite{KubPan2019}.

  \medskip

The authors warmly thank Henning Sulzbach for his feedback, improving the presentation of this work.

\end{document}